\providecommand{\tabularnewline}{\\}
\numberwithin{equation}{section}
\numberwithin{figure}{section}
\theoremstyle{plain}
\newtheorem{thm}{\protect\theoremname}[section]
\theoremstyle{plain}
\newtheorem{lem}[thm]{\protect\lemmaname}
\theoremstyle{plain}
\newtheorem{prop}[thm]{\protect\propositionname}
\theoremstyle{plain}
\newtheorem{conjecture}[thm]{\protect\conjecturename}
\DeclareMathOperator{\Des}{Des}
\DeclareMathOperator{\pk}{pk}
\DeclareMathOperator{\ipk}{ipk}
\DeclareMathOperator{\lpk}{lpk}
\DeclareMathOperator{\ilpk}{ilpk}
\DeclareMathOperator{\Comp}{Comp}
\DeclareMathOperator{\udComp}{udComp}
\DeclareMathOperator{\br}{br}
\DeclareMathOperator{\ibr}{ibr}
\DeclareMathOperator{\udr}{udr}
\DeclareMathOperator{\iudr}{iudr}
\DeclareMathOperator{\des}{des}
\DeclareMathOperator{\ides}{ides}
\DeclareMathOperator{\st}{st}
\DeclareMathOperator{\ist}{ist}
\DeclareMathOperator{\maj}{maj}
\DeclareMathOperator{\imaj}{imaj}
\DeclareMathOperator{\Pin}{Pin}
\let\originalleft\left
\let\originalright\right
\renewcommand{\left}{\mathopen{}\mathclose\bgroup\originalleft}
\renewcommand{\right}{\aftergroup\egroup\originalright}
\newcommand{\leqnomode}{\tagsleft@true\let\veqno\@@leqno}
\newcommand{\reqnomode}{\tagsleft@false\let\veqno\@@eqno}
\title{Two-sided permutation statistics via \\ symmetric functions}
\author[1]{Ira M.\ Gessel\thanks{\tt{gessel@brandeis.edu}}} 
\author[2]{Yan Zhuang\thanks{\tt{yazhuang@davidson.edu}}}
\affil[1]{Department of Mathematics, Brandeis University} 
\affil[2]{Department of Mathematics and Computer Science, Davidson College}
\date{\today}
\providecommand{\conjecturename}{Conjecture}
\providecommand{\lemmaname}{Lemma}
\providecommand{\propositionname}{Proposition}
\providecommand{\theoremname}{Theorem}
\begin{document}

\maketitle

\begin{abstract}
Given a permutation statistic $\st$, define its inverse statistic
$\ist$ by $\ist(\pi)\coloneqq\st(\pi^{-1})$. We give a general approach,
based on the theory of symmetric functions, for finding the joint
distribution of $\st_{1}$ and $\ist_{2}$ whenever $\st_{1}$ and
$\st_{2}$ are descent statistics: permutation statistics
that depend only on the descent composition. We apply this method
to a number of descent statistics, including the descent number, the
peak number, the left peak number, the number
of up-down runs, and the major index. Perhaps surprisingly, in many cases
the polynomial giving the joint distribution of $\st_{1}$ and $\ist_{2}$
can be expressed as a simple sum involving products of the polynomials
giving the (individual) distributions of $\st_{1}$ and $\st_{2}$.
Our work leads to a rederivation of Stanley's generating function
for doubly alternating permutations, as well as several conjectures 
concerning real-rootedness and $\gamma$-positivity.
\end{abstract}
\textbf{\small{}Keywords:}{\small{} permutation statistics, descents,
peaks, two-sided Eulerian polynomials, doubly alternating permutations, symmetric functions}{\let\thefootnote\relax\footnotetext{YZ was partially supported by an AMS-Simons Travel Grant and NSF grant DMS-2316181.}}{\let\thefootnote\relax\footnotetext{2020 \textit{Mathematics Subject Classification}. Primary 05A15; Secondary 05A05, 05E05.}}

\tableofcontents{}

\section{Introduction}

Let $\mathfrak{S}_{n}$ denote the symmetric group of permutations
of the set $[n]\coloneqq\{1,2,\dots,n\}$. We call $i\in[n-1]$ a
\textit{descent} of $\pi\in\mathfrak{S}_{n}$ if $\pi(i)>\pi(i+1)$.
Let
\[
\des(\pi)\coloneqq\sum_{\pi(i)>\pi(i+1)}1\quad\text{and}\quad\maj(\pi)\coloneqq\sum_{\pi(i)>\pi(i+1)}i
\]
be the number of descents and the sum of descents of $\pi$, respectively.
The \textit{descent number} $\des$ and \textit{major index} $\maj$
are classical permutation statistics dating back to MacMahon \cite{macmahon}.

Given a permutation statistic $\st$, let us define its \textit{inverse
statistic} $\ist$ by $\ist(\pi)\coloneqq\st(\pi^{-1})$. This paper
is concerned with the general problem of finding the joint distribution
of a permutation statistic $\st$ and its inverse statistic $\ist$
over the symmetric group $\mathfrak{S}_{n}$. This was first done
by Carlitz, Roselle, and Scoville \cite{Carlitz1966} for $\des$
and by Roselle \cite{Roselle1974} for $\maj$. Among the results
of Carlitz, Roselle, and Scoville was the elegant generating function
formula
\begin{equation}
\sum_{n=0}^{\infty}\frac{A_{n}(s,t)}{(1-s)^{n+1}(1-t)^{n+1}}x^{n}=\sum_{i,j=0}^{\infty}\frac{s^{i}t^{j}}{(1-x)^{ij}}\label{e-2seur}
\end{equation}
for the \textit{two-sided Eulerian polynomials} $A_{n}(s,t)$ defined
by
\[
A_{n}(s,t)\coloneqq\sum_{\pi\in\mathfrak{S}_{n}}s^{\des(\pi)+1}t^{\ides(\pi)+1}
\]
for $n\geq1$ and $A_{0}(s,t)\coloneqq1$.\footnote{Carlitz, Roselle, and Scoville actually considered the joint distribution
of ascents (elements of $[n-1]$ which are not descents) and inverse
ascents, but this is the same as the joint distribution of descents and
inverse descents by symmetry.} (Throughout this paper, all polynomials encoding distributions of
permutation statistics will be defined to be 1 for $n=0$.) Roselle gave a similar formula for the joint distribution of $\maj$ and $\imaj$ over $\mathfrak{S}_{n}$. Note that extracting coefficients of $x^{n}$
from both sides of (\ref{e-2seur}) yields the formula
\begin{equation}
\label{e-crs}
\frac{A_{n}(s,t)}{(1-s)^{n+1}(1-t)^{n+1}}=\sum_{i,j=0}^{\infty}{ij+n-1 \choose n}s^{i}t^{j},
\end{equation}
which Petersen \cite{Petersen2013} later proved using the technology
of putting balls in boxes. Equation \eqref{e-crs} may be compared with the formula 
\begin{equation}
\label{e-Eul}
\frac{A_{n}(t)}{(1-t)^{n+1}}=\sum_{k=0}^{\infty}k^{n}t^{k}
\end{equation}
for the (ordinary) \emph{Eulerian polynomials}
$$
A_n(t) \coloneqq \sum_{\pi\in \mathfrak{S}_n}t^{\des(\pi)+1}.
$$

Several years after the work of Carlitz--Roselle--Scoville and Roselle, Garsia and Gessel~\cite{Garsia1979} used the theory of $P$-partitions to derive the formula 
\begin{equation}
\sum_{n=0}^{\infty}\frac{A_{n}(s,t,q,r)}{(1-s)(1-qs)\cdots(1-q^{n}s)(1-t)(1-rt)\cdots(1-r^{n}t)}x^{n}=\sum_{i,j=0}^{\infty}s^{i}t^{j}\prod_{k=0}^{i}\prod_{l=0}^{j}\frac{1}{1-xq^{k}r^{l}}\label{e-gg}
\end{equation}
for the quadrivariate polynomials
\[
A_{n}(s,t,q,r)\coloneqq\sum_{\pi\in\mathfrak{S}_{n}}s^{\des(\pi)}t^{\ides(\pi)}q^{\maj(\pi)}r^{\imaj(\pi)}.
\]
The Garsia\textendash Gessel formula (\ref{e-gg}) specializes to
both the Carlitz\textendash Roselle\textendash Scoville formula (\ref{e-2seur})
for $(\des,\ides)$ as well as Roselle's formula for $(\maj,\imaj)$.

The two-sided Eulerian polynomials have since received renewed attention due to a refined $\gamma$-positivity conjecture of Gessel, which was later proven by Lin \cite{Lin2016a}. The joint statistic $(\des,\maj,\ides,\imaj)$ and the sum $\des+\ides$ have also been studied in the probability literature; see, for example, \cite{Brueck2022, Chatterjee2017, Feray2020, Vatutin1996}.

Throughout this paper, let us call a pair of the form $(\st,\ist)$
a \textit{two-sided permutation statistic} and the distribution of
this statistic over $\mathfrak{S}_{n}$ the \textit{two-sided distribution}
of $\st$. If we are taking $\st$ to be a pair $(\st_{1},\st_{2})$
such as $(\des,\maj)$, then we consider $(\st_{1},\ist_{1},\st_{2},\ist_{2})$
a two-sided statistic as well.

\subsection{Descent statistics}

We use the notation $L\vDash n$ to indicate that $L$ is a composition
of $n$. Every permutation can be uniquely decomposed into a sequence
of maximal increasing consecutive subsequences\textemdash or equivalently,
maximal consecutive subsequences with no descents\textemdash which
we call \textit{increasing runs}. The descent composition of $\pi$,
denoted $\Comp(\pi)$, is the composition whose parts are the lengths
of the increasing runs of $\pi$ in the order that they appear. For
example, the increasing runs of $\pi=72485316$ are $7$, $248$,
$5$, $3$, and $16$, so $\Comp(\pi)=(1,3,1,1,2)$. 

A permutation statistic $\st$ is called a \textit{descent statistic}
if $\Comp(\pi)=\Comp(\sigma)$ implies $\st(\pi)=\st(\sigma)$\textemdash that
is, if $\st$ depends only on the descent composition. Whenever $\st$
is a descent statistic, we may write $\st(L)$ for the value of $\st$
on any permutation with descent composition $L$. Both $\des$ and
$\maj$ are descent statistics, and in this paper, we will also consider
the following descent statistics:
\begin{itemize}
\item The \textit{peak number} $\pk$. We call $i$ (where $2\leq i\leq n-1$)
a \textit{peak} of $\pi\in\mathfrak{S}_{n}$ if $\pi(i-1)<\pi(i)>\pi(i+1)$.
Then $\pk(\pi)$ is the number of peaks of $\pi$.
\item The \textit{left peak number} $\lpk$. We call $i\in[n-1]$ a \textit{left
peak} of $\pi\in\mathfrak{S}_{n}$ if either $i$ is a peak of $\pi$,
or if $i=1$ and $\pi(1)>\pi(2)$. Then $\lpk(\pi)$ is the number
of left peaks of $\pi$.
\item The \textit{number of biruns} $\br$. A \textit{birun} of a permutation
$\pi$ is a maximal monotone consecutive subsequence. Then $\br(\pi)$
is the number of biruns of $\pi$.
\item The \textit{number of up-down runs} $\udr$. An \textit{up-down run}
of $\pi$ is either a birun of $\pi$, or $\pi(1)$ if $\pi(1)>\pi(2)$.
Then $\udr(\pi)$ is the number of up-down runs of $\pi$.\footnote{Equivalently, $\udr(\pi)$ is equal to the length of the longest alternating
subsequence of $\pi$, which was studied in depth by Stanley \cite{Stanley2008}.}
\end{itemize}
For example, if $\pi=624731598$, then the peaks of $\pi$ are $4$
and $8$; the left peaks of $\pi$ are $1$, $4$, and $8$; the biruns
of $\pi$ are $62$, $247$, $731$, $159$, and $98$; and the up-down
runs of $\pi$ are $6$, $62$, $247$, $731$, $159$, and $98$.
Therefore, we have $\pk(\pi)=2$, $\lpk(\pi)=3$, $\br(\pi)=5$, and
$\udr(\pi)=6$. Other examples of descent statistics include the number
of valleys, double ascents, double descents, and alternating descents
(see \cite{Zhuang2016} for definitions).

In this paper, we give a general approach to the problem of finding
the two-sided distribution of $\st$ whenever $\st$ is a descent
statistic. In fact, our approach can be used to find distributions
of \textit{mixed two-sided statistics}: the joint distribution of
$\st_{1}$ and $\ist_{2}$ when $\st_{1}$ and $\st_{2}$ are (possibly
different) descent statistics. Our approach utilizes a theorem of
Foulkes \cite{Foulkes1976} at the intersection of permutation enumeration
and symmetric function theory: the number of permutations $\pi$ with
prescribed descent composition $L$ whose inverse $\pi^{-1}$ has
descent composition $M$ is equal to the scalar product $\left\langle r_{L},r_{M}\right\rangle $
of two ribbon skew Schur functions (defined in Section \ref{ss-foulkes}).
Thus, if $f$ is a generating function for ribbon functions that keeps
track of a descent statistic $\st_{1}$ and $g$ is a similar generating
function for a descent statistic $\st_{2}$, then the scalar product
$\left\langle f,g\right\rangle $ should give a generating function
for $(\st_{1},\ist_{2})$.

To illustrate this idea, let us sketch how our approach can be used
to rederive the formula (\ref{e-2seur}) of Carlitz, Roselle, and
Scoville for the two-sided Eulerian polynomials. We have 
\[
\frac{1}{1-tH(x)}=\frac{1}{1-t}+\sum_{n=1}^{\infty}\sum_{L\vDash n}\frac{t^{\des(L)+1}}{(1-t)^{n+1}}r_{L}x^{n}
\]
where $H(z)\coloneqq\sum_{n=0}^{\infty}h_{n}z^{n}$ is the ordinary
generating function for the complete symmetric functions $h_{n}$.
Then, upon applying Foulkes's theorem, we get
\begin{align*}
\left\langle \frac{1}{1-sH(x)},\frac{1}{1-tH(1)}\right\rangle  & =\frac{1}{(1-s)(1-t)}+\sum_{n=1}^{\infty}\sum_{L,M\vDash n}\frac{s^{\des(L)+1}t^{\des(M)+1}}{(1-s)^{n+1}(1-t)^{n+1}}\left\langle r_{L},r_{M}\right\rangle x^{n}\\
 & =\sum_{n=0}^{\infty}\frac{A_{n}(s,t)}{(1-s)^{n+1}(1-t)^{n+1}}x^{n}.
\end{align*}
However, calculating the same scalar product but in a different way
yields 
\[
\left\langle \frac{1}{1-sH(x)},\frac{1}{1-tH(1)}\right\rangle =\sum_{i,j=0}^{\infty}\frac{s^{i}t^{j}}{(1-x)^{ij}},
\]
and equating these two expressions recovers (\ref{e-2seur}).

In fact, a third way of calculating the same scalar product leads
to a new formula expressing the two-sided Eulerian polynomials $A_{n}(s,t)$
in terms of the Eulerian polynomials $A_n(t)$.
Let us use the notations $\lambda\vdash n$ and $\left|\lambda\right|=n$
to indicate that $\lambda$ is a partition of $n$, and let $l(\lambda)$
denote the number of parts of $\lambda$. We write $\lambda=(1^{m_{1}}2^{m_{2}}\cdots)$
to mean that $\lambda$ has $m_{1}$ parts of size $1$, $m_{2}$
parts of size 2, and so on, and define $z_{\lambda}\coloneqq1^{m_{1}}m_{1}!\,2^{m_{2}}m_{2}!\,\cdots$.
Then it can be shown that 
\[
\left\langle \frac{1}{1-sH(x)},\frac{1}{1-tH(1)}\right\rangle =\sum_{\lambda}\frac{1}{z_{\lambda}}\frac{A_{l(\lambda)}(s)A_{l(\lambda)}(t)}{(1-s)^{l(\lambda)+1}(1-t)^{l(\lambda)+1}}x^{\left|\lambda\right|},
\]
leading to
\[
A_{n}(s,t)=\sum_{\lambda\vdash n}\frac{1}{z_{\lambda}}((1-s)(1-t))^{n-l(\lambda)}A_{l(\lambda)}(s)A_{l(\lambda)}(t).
\]
Collecting terms with $l(\lambda)=k$ then gives the formula
\begin{equation*}
A_n(s,t) = \frac{1}{n!} \sum_{k=0}^n c(n,k) ((1-s)(1-t))^{n-k}A_k(s) A_k(t),
\end{equation*}
(see Theorem \ref{t-des}), where the $c(n,k)$ are the unsigned Stirling numbers of the first kind \cite[p.~26]{Stanley2011}. Perhaps surprisingly, many of the ``two-sided polynomials'' that we study in this paper can be similarly expressed
as a simple sum involving products of the univariate polynomials encoding
the distributions of the individual statistics.

Foulkes's theorem was also used by Stanley \cite{Stanley2007} in his study of alternating permutations; some of our results, notably Theorems \ref{t-pkipkgf} and \ref{t-lpkilpkgf}, generalize results of his.

\subsection{Outline}

We organize this paper as follows. Section \ref{s-symfun} is devoted
to background material on symmetric functions. While we assume familiarity
with basic symmetric function theory at the level of Stanley \cite[Chapter 7]{Stanley2001},
we shall use this section to establish notation, recall some elementary
facts that will be important for our work, and to give an exposition
of various topics and results needed to develop our approach to two-sided
statistics; these include Foulkes's theorem, plethysm, and symmetric
function generating functions associated with descent statistics.

Our main results are given in Sections \ref{s-pkdes}\textendash \ref{s-maj}.
We begin in Section \ref{s-pkdes} by using our symmetric function
approach to prove formulas for the two-sided statistic $(\pk,\ipk,\des,\ides)$,
which we then specialize to formulas for $(\pk,\ipk)$, $(\pk,\ides)$,
and $(\des,\ides)$. We continue in Section \ref{s-lpk} by deriving
analogous formulas for $(\lpk,\ilpk,\des,\ides)$ and $(\lpk,\ipk,\des,\ides)$,
and their specializations. Notably, our results for the two-sided distributions of $\pk$ 
and of $\lpk$ lead to a rederivation of Stanley's \cite{Stanley2007} generating function
formula for ``doubly alternating permutations'': alternating permutations whose inverses 
are alternating.

Section \ref{s-udr} considers (mixed) two-sided distributions involving 
the number of up-down runs, as well as a couple involving the number of biruns. 
In Section \ref{s-maj}, we give a rederivation of the Garsia\textendash Gessel 
formula for $(\maj,\imaj,\des,\ides)$ using our approach and give formulas for
several mixed two-sided distributions involving the major index.

We conclude in Section \ref{s-conj} with several conjectures concerning
real-rootedness and $\gamma$-positivity of some of the polynomials
appearing in our work.

\section{\label{s-symfun}Symmetric functions background}

Let $\Lambda$ denote the $\mathbb{Q}$-algebra of symmetric functions
in the variables $x_{1},x_{2},\dots$. We recall the important bases
for $\Lambda$: the monomial symmetric functions $m_{\lambda}$, the
complete symmetric functions $h_{\lambda}$, the elementary symmetric
functions $e_{\lambda}$, the power sum symmetric functions $p_{\lambda}$,
and the Schur functions $s_{\lambda}$. As usual, we write $h_{(n)}$
as $h_{n}$, $e_{(n)}$ as $e_{n}$, and $p_{(n)}$ as $p_{n}$.

We will also work with symmetric functions with coefficients involving
additional variables such as $s$, $t$, $y$, $z$, and $\alpha$,
as well as symmetric functions of unbounded degree like
\[
H(z)=\sum_{n=0}^{\infty}h_{n}z^{n}\quad\text{and}\quad E(z)\coloneqq\sum_{n=0}^{\infty}e_{n}z^{n}.
\]
We adopt the notation 
\[
H\coloneqq H(1)=\sum_{n=0}^{\infty}h_{n}\quad\text{and}\quad E\coloneqq E(1)=\sum_{n=0}^{\infty}e_{n}.
\]

\subsection{\label{ss-foulkes}The scalar product and Foulkes's theorem}

Let $\left\langle \cdot,\cdot\right\rangle \colon\Lambda\times\Lambda\rightarrow\mathbb{Q}$
denote the usual scalar product on symmetric functions defined by
\[
\left\langle m_{\lambda},h_{\mu}\right\rangle =\begin{cases}
1, & \text{if }\lambda=\mu\\
0, & \text{otherwise,}
\end{cases}
\]
for all partitions $\lambda$ and $\mu$ and extending bilinearly,
i.e., by requiring that $\{m_{\lambda}\}$ and $\{h_{\mu}\}$ be dual
bases. Then we have
\[
\left\langle p_{\lambda},p_{\mu}\right\rangle =\begin{cases}
z_{\lambda}, & \text{if }\lambda=\mu\\
0, & \text{otherwise,}
\end{cases}
\]
for all $\lambda$ and $\mu$ \cite[Proposition 7.9.3]{Stanley2011}.
We extend the scalar product in the obvious way to symmetric functions
involving other variables as well as symmetric functions of unbounded
degree. Note that the scalar product is not always defined for the
latter; e.g., we have $\left\langle H(z),H\right\rangle =\sum_{n=0}^{\infty}z^{n}$
but $\left\langle H,H\right\rangle $ is undefined.

Given a composition $L$, let $r_{L}$ denote the skew Schur function
of ribbon shape $L$. That is, for $L=(L_{1},L_{2},\dots,L_{k})$,
we have 
\[
r_{L}=\sum_{i_{1},\dots,i_{n}}x_{i_{1}}x_{i_{2}}\cdots x_{i_{n}}
\]
where the sum is over all $i_{1},\dots,i_{n}$ satisfying 
\[
\underset{L_{1}}{\underbrace{i_{1}\leq\cdots\leq i_{L_{1}}}}>\underset{L_{2}}{\underbrace{i_{L_{1}+1}\leq\cdots\leq i_{L_{1}+L_{2}}}}>\cdots>\underset{L_{k}}{\underbrace{i_{L_{1}+\cdots+L_{k-1}+1}\leq\cdots\leq i_{n}}.}
\]

The next theorem, due to Foulkes \cite{Foulkes1976} (see also \cite[Theorem 5]{Gessel1984}
and \cite[Corollary 7.23.8]{Stanley2001}), will play a pivotal role
in our approach to two-sided descent statistics.
\begin{thm}
\label{t-foulkes}Let $L$ and $M$ be compositions. Then $\left\langle r_{L},r_{M}\right\rangle $
is the number of permutations $\pi$ with descent composition $L$
such that $\pi^{-1}$ has descent composition $M$.
\end{thm}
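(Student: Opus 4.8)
The plan is to prove this by expanding both ribbon functions in a common basis whose scalar product structure directly counts permutations. The most natural choice is to expand each $r_L$ in the \emph{monomial} and \emph{complete homogeneous} bases (or equivalently, to use the fundamental/monomial quasisymmetric duality), so that the scalar product collapses to a count of lattice-point solutions that I can put in bijection with permutations.

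First I would recall the combinatorial definition of $r_L$ given just above the statement: $r_L = \sum x_{i_1}\cdots x_{i_n}$ over weakly increasing runs of lengths $L_1, L_2, \dots$ separated by strict descents. The key observation is that a monomial $x_{i_1}\cdots x_{i_n}$ appears in $r_L$ precisely when the index sequence $(i_1,\dots,i_n)$ has its strict descents occurring exactly at the positions prescribed by $L$, and weak ascents elsewhere. Equivalently, if $\mu$ is the partition underlying the multiset $\{i_1,\dots,i_n\}$, then the coefficient of $m_\mu$ in $r_L$ counts the number of ways to arrange that multiset into a sequence whose descent set is contained in (in fact equals, after the ribbon's shape is accounted for) the set determined by $L$. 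I would make this precise and write $r_L = \sum_\mu d_{L,\mu}\, m_\mu$, where $d_{L,\mu}$ is the number of sequences of content $\mu$ with the ribbon descent pattern of $L$.

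Next I would use the defining duality $\langle m_\lambda, h_\mu\rangle = \delta_{\lambda\mu}$. Writing $r_M = \sum_\nu e_{M,\nu}\, h_\nu$ in the $h$-basis, the scalar product becomes $\langle r_L, r_M\rangle = \sum_\mu d_{L,\mu}\, e_{M,\mu}$. The cleaner route, which I expect to be the main obstacle to state transparently, is to interpret this sum bijectively. Each term $d_{L,\mu}\,e_{M,\mu}$ should count pairs consisting of a filling realizing the ribbon $L$ with content $\mu$ and a filling realizing the ribbon $M$ with content $\mu$; superimposing these two fillings on a common content should reconstruct a unique permutation $\pi$ whose descent composition is $L$ and whose inverse has descent composition $M$. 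Concretely, a permutation $\pi \in \mathfrak{S}_n$ corresponds to the biword obtained from the two-line notation, and $\Comp(\pi) = L$ records the descents of the values against positions while $\Comp(\pi^{-1}) = M$ records them against the inverse. The heart of the argument is checking that this correspondence between superimposed ribbon fillings and permutations is a genuine bijection, and that the descent conditions match on both sides.

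The main difficulty will be handling the interaction between \emph{weak} inequalities within runs and \emph{strict} inequalities at descents when two ribbon shapes are imposed simultaneously on the same content $\mu$: one must verify that ties in the content (repeated indices $i_k = i_{k+1}$) are resolved consistently so that exactly one permutation arises. A clean way to sidestep the bookkeeping is to pass to the fundamental quasisymmetric functions $F_L$, for which $r_L$ is the symmetric function whose image under the relevant map equals $\sum_{\Comp(\pi)=L} F_{\Comp(\pi)}$, and then invoke the known scalar-product formula $\langle F_L, F_M\rangle$ counting shuffles; but since the excerpt only provides the ribbon definition, I would likely carry out the direct $m$--$h$ expansion and exhibit the explicit bijection, as this keeps the proof self-contained.
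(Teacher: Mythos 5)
Your plan contains a genuine gap at its central step. You write $r_M=\sum_\nu e_{M,\nu}h_\nu$ and then assert that each term $d_{L,\mu}\,e_{M,\mu}$ ``should count pairs'' of ribbon fillings of common content $\mu$. But $e_{M,\mu}$ is \emph{not} the number of fillings of the ribbon $M$ with content $\mu$ --- that number is the monomial coefficient $d_{M,\mu}$, and the $h$-expansion coefficients of a ribbon are signed: for instance $r_{(1,1)}=e_{2}=h_{(1,1)}-h_{(2)}$, so $e_{(1,1),(2)}=-1$. Consequently no term-by-term bijection with pairs of fillings can exist. Worse, the quantity your superimposition heuristic would actually compute, $\sum_\mu d_{L,\mu}d_{M,\mu}$, genuinely overcounts: for $L=M=(2)$ and $n=2$ the fillings are $11$ (content $(2)$) and $12$ (content $(1,1)$), giving two pairs with common content, whereas $\left\langle h_2,h_2\right\rangle =1$ and only the identity permutation has $\Comp(\pi)=\Comp(\pi^{-1})=(2)$. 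So ``superimposing two fillings on a common content reconstructs a unique permutation'' is false as stated, and this is precisely the step your sketch defers. Your fallback is also not available in the form you recall: $F_L$ is not a symmetric function, so $\left\langle F_L,F_M\right\rangle$ under the scalar product on $\Lambda$ is undefined (the shuffle statement you have in mind concerns the \emph{product} $F_LF_M$, or the QSym--NSym duality, not this pairing).

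The standard repair of your direct approach replaces the claimed bijection by M\"obius inversion plus standardization. From $h_K=\sum_{\Des(J)\subseteq\Des(K)}r_J$ one inverts to get $r_M=\sum_{\Des(K)\subseteq\Des(M)}(-1)^{\left|\Des(M)\right|-\left|\Des(K)\right|}h_K$. Next, $\left\langle r_L,h_K\right\rangle$ is the coefficient of $x_1^{K_1}x_2^{K_2}\cdots$ in $r_L$, i.e.\ the number of words of content $K$ with descent set $\Des(L)$; standardization gives a bijection between such words and permutations $\pi$ with $\Comp(\pi)=L$ and $\Des(\pi^{-1})\subseteq\Des(K)$ (given $\pi$, the unique candidate word of content $K$ standardizing to $\pi$ exists exactly when $\Des(\pi^{-1})\subseteq\Des(K)$). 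Then for each $\pi$ with $\Comp(\pi)=L$, the alternating sum over the Boolean interval $\Des(\pi^{-1})\subseteq\Des(K)\subseteq\Des(M)$ vanishes unless $\Des(\pi^{-1})=\Des(M)$, yielding the theorem. Note also that the paper itself does not argue this way: it deduces Foulkes's theorem in one line from Theorem \ref{t-qsymgf}, observing that $r_M=Q(\Pi)$ where $\Pi$ is the set of permutations whose inverse has descent composition $M$ --- which is exactly the ``clean quasisymmetric route'' you gestured at, correctly formulated.
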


Foulkes's theorem is a special case of a more general
theorem of Gessel on quasisymmetric generating functions, which we
briefly describe below. For a composition $L=(L_{1},L_{2},\dots,L_{k})$,
let $\Des(L)\coloneqq\{L_{1},L_{1}+L_{2},\dots,L_{1}+\cdots+L_{k-1}\}$,
and recall that the fundamental quasisymmetric function $F_{L}$ is
defined by 
\[
F_{L}\coloneqq\sum_{\substack{i_{1}\leq i_{2}\leq\cdots\leq i_{n}\\
i_{j}<i_{j+1}\text{ if }j\in\Des(L)
}
}x_{i_{1}}x_{i_{2}}\cdots x_{i_{n}}.
\]
Moreover, given a set $\Pi$ of permutations, its \textit{quasisymmetric
generating function} $Q(\Pi)$ is defined by 
\[
Q(\Pi)\coloneqq\sum_{\pi\in\Pi}F_{\Comp(\pi)}.
\]
The following is Corollary 4 of Gessel \cite{Gessel1984}.
\begin{thm}
\label{t-qsymgf}Suppose that $Q(\Pi)$ is a symmetric function. Then
the number of permutations in $\Pi$ with descent composition $L$
is equal to $\left\langle r_{L},Q(\Pi)\right\rangle $.
\end{thm}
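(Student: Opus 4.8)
The plan is to reduce everything to a single ``master lemma'' and then read off the theorem. The master lemma asserts that the ribbon functions and the fundamental quasisymmetric functions are dual with respect to the scalar product, in the following sense: if $f \in \Lambda$ is a \emph{symmetric} function and $f = \sum_{M} c_{M} F_{M}$ is its (unique) expansion in the fundamental basis, then $c_{L} = \langle r_{L}, f\rangle$ for every composition $L$. Granting this, the theorem is immediate. Since $r_{L}$ is homogeneous of degree $n \coloneqq |L|$, we may assume $\Pi \subseteq \mathfrak{S}_{n}$; grouping the defining sum by descent composition gives
\[
Q(\Pi) = \sum_{\pi \in \Pi} F_{\Comp(\pi)} = \sum_{M \vDash n} b_{M}\, F_{M}, \qquad b_{M} = \#\{\pi \in \Pi : \Comp(\pi) = M\}.
\]
The hypothesis that $Q(\Pi)$ is symmetric is used in an essential way here: it is what makes $\langle r_{L}, Q(\Pi)\rangle$ meaningful and lets the master lemma apply to $f = Q(\Pi)$, which then yields $\langle r_{L}, Q(\Pi)\rangle = b_{L}$, precisely the assertion of the theorem.

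First I would establish the master lemma. The maps $f \mapsto \langle r_{L}, f\rangle$ and $f \mapsto c_{L}(f)$ (extraction of the coefficient of $F_{L}$) are both linear functionals on the finite-dimensional space $\Lambda_{n}$ of degree-$n$ symmetric functions, so it suffices to verify that they agree on the Schur basis $\{s_{\lambda}\}_{\lambda \vdash n}$. On one side, the standard expansion $s_{\lambda} = \sum_{T} F_{\Comp(T)}$, summed over standard Young tableaux $T$ of shape $\lambda$ \cite{Stanley2001}, shows that $c_{L}(s_{\lambda}) = \#\{T \in \mathrm{SYT}(\lambda) : \Comp(T) = L\}$. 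On the other side, self-duality of the Schur basis identifies $\langle r_{L}, s_{\lambda}\rangle$ with the coefficient of $s_{\lambda}$ in the Schur expansion of $r_{L}$. Hence the master lemma comes down to the single identity
\[
r_{L} = \sum_{\lambda \vdash n} \#\{T \in \mathrm{SYT}(\lambda) : \Comp(T) = L\}\, s_{\lambda}.
\]

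I expect this ribbon Schur expansion to be the crux of the whole argument: it asserts that one and the same array of tableau counts governs both the $s \to F$ transition and the $r \to s$ transition. Its conceptual source is the RSK correspondence $\pi \leftrightarrow (P, Q)$, together with the descent-preservation property $\Comp(Q) = \Comp(\pi)$ and $\Comp(P) = \Comp(\pi^{-1})$; grouping permutations by RSK shape turns the product of the two tableau counts into a count of permutations with prescribed $\Comp(\pi)$ and $\Comp(\pi^{-1})$, which is exactly how the ribbon expansion meshes with Foulkes's theorem. I would either cite the displayed expansion directly from the theory of $P$-partitions \cite{Stanley2001} or derive it along these RSK lines; in either case this is where essentially all the content lives, and the surrounding steps are formal linear algebra.

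As a consistency check and to exhibit the promised generalization, I would note that the special case $\Pi = \{\pi \in \mathfrak{S}_{n} : \Comp(\pi^{-1}) = M\}$ makes $Q(\Pi) = r_{M}$, so that Theorem \ref{t-qsymgf} specializes back to the statement $\langle r_{L}, r_{M}\rangle = \#\{\pi : \Comp(\pi) = L,\ \Comp(\pi^{-1}) = M\}$ of Foulkes's theorem (Theorem \ref{t-foulkes}); indeed, Foulkes's theorem could be used in place of the Schur-basis computation above, reducing the master lemma instead to the equivalent ribbon-tableau identity $\#\{T \in \mathrm{SYT}(\mathrm{rib}(M)) : \Comp(T) = L\} = \#\{\pi : \Comp(\pi) = L,\ \Comp(\pi^{-1}) = M\}$.
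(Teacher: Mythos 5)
Your proposal is correct, but note that the paper itself gives no proof of this statement: it is quoted as Corollary 4 of Gessel's 1984 paper, and the paper then derives Foulkes's theorem \emph{from} it. Your ``master lemma''---for symmetric $f=\sum_M c_M F_M$, the coefficient satisfies $c_L=\langle r_L,f\rangle$---is precisely the ribbon/fundamental duality on which the cited argument rests, so your primary route is a faithful reconstruction of the standard proof rather than a new one, and it is sound: both sides are linear functionals on $\Lambda_n$ (your reduction to $\Pi\subseteq\mathfrak{S}_n$ is licensed because the degree-$n$ component of $Q(\Pi)$ is $Q(\Pi\cap\mathfrak{S}_n)$ and homogeneous components of a symmetric function are symmetric), and they agree on the Schur basis by the expansion $s_\lambda=\sum_{T}F_{\Comp(T)}$ together with the ribbon Schur expansion $r_L=\sum_{\lambda\vdash n}d_{\lambda,L}\,s_\lambda$, where $d_{\lambda,L}=\#\{T\in\mathrm{SYT}(\lambda):\Comp(T)=L\}$; both facts are standard and provable from $P$-partition theory independently of the theorem, so there is no circularity. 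One caution about your middle paragraph: the RSK-plus-Foulkes reasoning, taken literally, yields only the Gram identity $\langle r_L,r_M\rangle=\sum_{\lambda}d_{\lambda,L}\,d_{\lambda,M}$, and an identity of the form $AA^{t}=DD^{t}$ does not by itself force $A=D$, so this is not yet a proof of the displayed ribbon expansion. Your closing paragraph contains the correct repair---test the two linear functionals directly on the ribbons, using Foulkes's theorem and the bijection between standard tableaux of ribbon shape $\mathrm{rib}(M)$ with $\Comp(T)=L$ and permutations with $\Comp(\pi)=L$ and $\Comp(\pi^{-1})=M$---but that reduction additionally requires the (true, standard) fact that the ribbons $r_M$ with $M\vDash n$ span $\Lambda_n$, which you should state explicitly since two functionals agreeing on a spanning set agree everywhere; and this route is legitimate in the context of the paper only because Foulkes's theorem has an independent proof (Foulkes, 1976), even though the paper presents it as a special case of the very theorem you are proving.
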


Because $r_{M}$ is the quasisymmetric generating function for permutations
whose inverse has descent composition $M$, Foulkes's theorem follows
from Theorem \ref{t-qsymgf}. 

Gessel and Reutenauer \cite{Gessel1993}
showed that, for any partition $\lambda$, the quasisymmetric generating
function for permutations with cycle type $\lambda$ is a symmetric
function, and they used this fact along with Theorem \ref{t-qsymgf}
to study the joint distribution of $\maj$ and $\des$ over sets of
permutations with restricted cycle structure, including cyclic permutations,
involutions, and derangements. The present authors later studied the
distributions of $(\pk,\des)$, $(\lpk,\des)$, and $\udr$ over permutations
with restricted cycle structure \cite{Gessel2020}. Our current
work is a continuation of this line of research but for inverse descent
classes.

\subsection{Plethysm}

Let $A$ be a $\mathbb{Q}$-algebra of formal power series (possibly
containing $\Lambda$). Consider the operation $\Lambda\times A\rightarrow A$,
where the image of $(f,a)\in\Lambda\times A$ is denoted $f[a]$,
defined by the following two properties:
\begin{enumerate}
\item For any $i\geq1$, $p_{i}[a]$ is the result of replacing each variable
in $a$ with its $i$th power.
\item For any fixed $a\in A$, the map $f\mapsto f[a]$ is a $\mathbb{Q}$-algebra
homomorphism from $\Lambda$ to $A$.
\end{enumerate}
In other words, we have $p_{i}[f(x_{1},x_{2},\dots)]=f(x_{1}^{i},x_{2}^{i},\dots)$
for any $f\in\Lambda$. If $f$ contains variables other than the
$x_{i}$, then they are all raised to the $i$th power as well. For
example, if $q$ and $t$ are variables then $p_{i}[q^{2}tp_{m}]=q^{2i}t^{i}p_{im}$.
The map $(f,a)\mapsto f[a]$ is called \textit{plethysm}. As with
the scalar product, we extend plethysm in the obvious way to symmetric
functions of unbounded degree with coefficients involving other variables;
whenever we do so, we implicitly assume that any infinite sums involved
converge as formal power series so that the plethysms are defined.

We will need several technical lemmas involving plethysm in order
to evaluate the scalar products needed in our work. All of these lemmas
are from \cite{Gessel2020} by the present authors (or are easy consequences
of results from \cite{Gessel2020}). 

A \textit{monic term} is any monomial with coefficient 1.
\begin{lem}
\label{l-desmajhom}Let $\mathsf{m}\in A$ be a monic term not containing
any of the variables $x_{1},x_{2},\dots$. Then the maps $f\mapsto\left\langle f,H(\mathsf{m})\right\rangle $
and $f\mapsto\left\langle H(\mathsf{m}),f\right\rangle $ are $\mathbb{Q}$-algebra
homomorphisms on $A$.
\end{lem}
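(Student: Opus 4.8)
The plan is to reduce the statement to a single scalar-product computation on power sums. First I would record that the scalar product is symmetric: this follows from bilinearity together with $\langle p_\lambda,p_\mu\rangle = z_\lambda\delta_{\lambda\mu}$, which is symmetric in $\lambda$ and $\mu$. Consequently $\langle H(\mathsf{m}),f\rangle = \langle f,H(\mathsf{m})\rangle$, so it suffices to treat the single map $\Phi\colon f\mapsto\langle f,H(\mathsf{m})\rangle$. Being built from the scalar product, $\Phi$ is $\mathbb{Q}$-linear automatically; the content is that it preserves products and the unit. Rather than verify multiplicativity directly, I would identify $\Phi$ with a homomorphism already known to exist.

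The key computation is the following. From $H(z)=\exp\sum_{k\ge1}p_k z^k/k$ one has the power-sum expansion $H(z)=\sum_\lambda z^{|\lambda|}z_\lambda^{-1}p_\lambda$, and since $\mathsf{m}$ is a monic term in variables other than the $x_i$ it may be substituted for $z$ to give $H(\mathsf{m})=\sum_\lambda \mathsf{m}^{|\lambda|}z_\lambda^{-1}p_\lambda$. Pairing against a single power sum and using $\langle p_\lambda,p_\mu\rangle=z_\lambda\delta_{\lambda\mu}$ then yields
\[
\Phi(p_\mu)=\langle p_\mu,H(\mathsf{m})\rangle=\mathsf{m}^{|\mu|}.
\]
Now let $\Psi$ denote the unique $\mathbb{Q}$-algebra homomorphism on $\Lambda$ (extended so as to fix the auxiliary variables $\mathsf{m},s,t,\dots$, which the scalar product treats as scalars) determined by $p_k\mapsto\mathsf{m}^k$. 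Since $p_\mu=\prod_i p_{\mu_i}$, we get $\Psi(p_\mu)=\prod_i\mathsf{m}^{\mu_i}=\mathsf{m}^{|\mu|}=\Phi(p_\mu)$ for every $\mu$. Because the $p_\mu$ form a $\mathbb{Q}$-basis of $\Lambda$ and both $\Phi$ and $\Psi$ are $\mathbb{Q}$-linear, $\Phi=\Psi$ on $\Lambda$; in particular $\Phi$ is multiplicative and $\Phi(1)=\Phi(p_\varnothing)=\mathsf{m}^{0}=1$. Extending bilinearly over the auxiliary variables then shows $\Phi$ is a $\mathbb{Q}$-algebra homomorphism on all of $A$.

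The main obstacle I anticipate is not the algebra but the analytic bookkeeping. The symmetric function $H(\mathsf{m})$ has unbounded degree, and a general $f\in A$ may be an infinite series in the $x_i$ with coefficients that are themselves formal power series in $\mathsf{m}$ and the other auxiliary variables, so I must ensure the scalar products and the passage to the power-sum basis are well defined and that the degreewise computation can be assembled into the stated identity. Here I would invoke the paper's standing convention that all infinite sums involved converge as formal power series; with that convention, $\Phi$ is defined on the relevant elements of $A$, the identification $\Phi=\Psi$ on the topological generators $p_k$ extends continuously, and the homomorphism property holds throughout $A$.
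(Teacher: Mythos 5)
Your proof is correct, and it differs from the paper's mainly in being self-contained. The paper disposes of the lemma in one line: it cites \cite[Lemma 2.5]{Gessel2020} for the identity $f[\mathsf{m}]=\langle f,H(\mathsf{m})\rangle=\langle H(\mathsf{m}),f\rangle$ and then appeals to the defining property of plethysm that $f\mapsto f[\mathsf{m}]$ is a $\mathbb{Q}$-algebra homomorphism. Your argument reconstructs exactly this identification rather than quoting it: since $\mathsf{m}$ is a monic term, $p_k[\mathsf{m}]=\mathsf{m}^k$, so your substitution homomorphism $\Psi$ determined by $p_k\mapsto\mathsf{m}^k$ \emph{is} the plethysm map $f\mapsto f[\mathsf{m}]$, and your computation $\langle p_\mu,H(\mathsf{m})\rangle=\mathsf{m}^{|\mu|}$ via the expansion $H(z)=\sum_\lambda z^{|\lambda|}z_\lambda^{-1}p_\lambda$ and orthogonality of power sums amounts to an inline proof of the cited lemma in the case needed here. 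What your route buys is independence from the external reference (and your use of the symmetry of the scalar product handles both maps at once, where the paper gets both from the cited identity); what it costs is length. One point you were right to flag: the convergence issue is genuine, since the paper's implicit-convergence convention is stated for plethysm while the scalar product can honestly fail to be defined (e.g.\ $\langle H,H\rangle$), and in applications $\mathsf{m}$ may even equal $1$ (as with $H(r^l)$, $l=0$, in the Garsia--Gessel rederivation), in which case definedness of $\Phi(f)=\sum_\lambda c_\lambda\mathsf{m}^{|\lambda|}$ depends on the other argument; so the multiplicativity $\langle fg,H(\mathsf{m})\rangle=\langle f,H(\mathsf{m})\rangle\langle g,H(\mathsf{m})\rangle$ should be understood, as your final paragraph indicates, to hold whenever the relevant formal sums converge, which is how the paper uses it.
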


\begin{proof}
By a special case of \cite[Lemma 2.5]{Gessel2020}, we have
\[
f[\mathsf{m}]=\left\langle f,H(\mathsf{m})\right\rangle =\left\langle H(\mathsf{m}),f\right\rangle 
\]
and the result follows from the fact that $f\mapsto f[\mathsf{m}]$
is a $\mathbb{Q}$-algebra homomorphism. 
\end{proof}
\begin{lem}
\label{l-pkdeshom}Let $y\in A$ be a variable and $k\in\mathbb{Z}$.
Then the map $f\mapsto\left\langle f,E(y)^{k}H^{k}\right\rangle $
is a $\mathbb{Q}$-algebra homomorphism on $A$.
\end{lem}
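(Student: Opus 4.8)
The plan is to reduce the scalar product against $E(y)^{k}H^{k}$ to a single plethystic evaluation, and then invoke the fact that plethysm by a fixed element is a $\mathbb{Q}$-algebra homomorphism (property (2) of plethysm, which is exactly what powered the proof of Lemma \ref{l-desmajhom}). Concretely, I would look for an element $w$, specified by its power sums, such that $\left\langle f, E(y)^{k}H^{k}\right\rangle = f[w]$ for all $f$, generalizing the identity $\langle f, H(\mathsf{m})\rangle = f[\mathsf{m}]$ of Lemma \ref{l-desmajhom} from a single monic term $\mathsf{m}$ to such a $w$. Once this identity is in hand the lemma is immediate, since $f \mapsto f[w]$ is a $\mathbb{Q}$-algebra homomorphism.

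To identify $w$, I would write everything multiplicatively. Using $H(z) = \prod_{i}(1 - z x_{i})^{-1}$ and $E(y) = \prod_{i}(1 + y x_{i})$ and taking logarithms gives
\[
E(y)^{k}H^{k} = \exp\left(\sum_{n=1}^{\infty}\frac{k(1 - (-y)^{n})}{n}\,p_{n}\right),
\]
so the natural candidate is the element $w$ determined by $p_{n}[w] = k(1 - (-y)^{n})$ for all $n \geq 1$; in $\lambda$-ring language this is the virtual alphabet $k(1 - (-y))$, which collapses to a single monic term precisely in the degenerate cases covered by Lemma \ref{l-desmajhom}. Here $k$ may be negative, in which case $E(y)^{k}H^{k}$ is read as a formal power series and the logarithm is expanded formally; this is where the standing convergence assumption enters.

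To prove $\langle f, E(y)^{k}H^{k}\rangle = f[w]$ I would use $\mathbb{Q}$-linearity in $f$ to reduce to the basis $\{p_{\lambda}\}$. For the right-hand side, multiplicativity of plethysm gives $p_{\lambda}[w] = \prod_{i} p_{\lambda_{i}}[w] = \prod_{i} k(1 - (-y)^{\lambda_{i}})$. For the left-hand side, expanding $\exp(\sum_{n} a_{n} p_{n})$ with $a_{n} = k(1 - (-y)^{n})/n$ and using $\langle p_{\lambda}, p_{\mu}\rangle = z_{\lambda}\delta_{\lambda\mu}$ yields $\langle p_{\lambda}, E(y)^{k}H^{k}\rangle = \prod_{n}(n a_{n})^{m_{n}} = \prod_{i} k(1 - (-y)^{\lambda_{i}})$, where $\lambda = (1^{m_{1}}2^{m_{2}}\cdots)$. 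The two sides agree, establishing the identity first on $\Lambda$ and then, coefficientwise, on $A$; the homomorphism property of $f \mapsto f[w]$ finishes the proof.

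The main obstacle is conceptual rather than computational: it is justifying the reproducing identity $\langle f, E(y)^{k}H^{k}\rangle = f[w]$ for a genuinely virtual $w$ — one with a subtracted letter coming from the $E(y)$ factor and, when $k < 0$, negative multiplicity — since Lemma \ref{l-desmajhom} supplies only the single-positive-letter case. The power-sum computation above sidesteps this by checking the identity directly on a basis. Alternatively, one can argue structurally: $H(z)$ and $E(y)$ are group-like for the coproduct on $\Lambda$, hence so is any product of their powers, and the scalar product against a group-like element is automatically multiplicative; this is the conceptual reason the lemma holds, and it matches the treatment in \cite{Gessel2020}.
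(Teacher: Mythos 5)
Your proof is correct, and it rests on the same underlying identity as the paper's: that the functional $f\mapsto\left\langle f,E(y)^{k}H^{k}\right\rangle$ coincides with the plethystic substitution $f\mapsto f[k(1-\alpha)]$ at $\alpha=-y$, which is a $\mathbb{Q}$-algebra homomorphism. The difference is in how that identity is justified. The paper simply cites \cite[Lemma 3.2]{Gessel2020} for $\left\langle f,E(-\alpha)^{k}H^{k}\right\rangle=f[k(1-\alpha)]$ and then composes the homomorphism $f\mapsto f[k(1-\alpha)]$, with $\alpha$ a \emph{genuine variable}, with the evaluation $\alpha\mapsto-y$; this two-step composition is precisely how the paper dodges the pitfall you flag, namely that $k(1-(-y))$ is not an honest alphabet in $A$ (indeed $p_{n}[k+ky]=k+ky^{n}\neq k(1-(-y)^{n})$ for even $n$, so one cannot just plug $-y$ into the second plethysm argument). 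You instead make the argument self-contained: the expansion $E(y)^{k}H^{k}=\exp\bigl(\sum_{n\geq1}k(1-(-y)^{n})p_{n}/n\bigr)$ together with $\left\langle p_{\lambda},p_{\mu}\right\rangle=z_{\lambda}\delta_{\lambda\mu}$ gives $\left\langle p_{\lambda},E(y)^{k}H^{k}\right\rangle=\prod_{i}k(1-(-y)^{\lambda_{i}})$, which is multiplicative over parts, so the functional agrees with the unique algebra homomorphism $\Lambda\rightarrow A$ sending the free generators $p_{n}$ to $k(1-(-y)^{n})$; this is fully rigorous and sidesteps the virtual-alphabet issue without invoking the cited lemma. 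Your closing structural remark is also sound: $E(y)^{k}H^{k}$ is the exponential of a primitive element, hence group-like, and pairing against a group-like element is multiplicative by the self-duality of $\Lambda$ as a Hopf algebra---this is the conceptual reason behind both proofs. What the paper's version buys is brevity via citation; what yours buys is independence from \cite{Gessel2020} and an explicit basis verification, at the cost of redoing a computation the paper treats as known.
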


\begin{proof}
Given $f\in A$ and a variable $\alpha\in A$, it is known \cite[Lemma 3.2]{Gessel2020}
that 
\[
f[k(1-\alpha)]=\left\langle f,E(-\alpha)^{k}H^{k}\right\rangle .
\]
Then the map $f\mapsto\left\langle f,E(y)^{k}H^{k}\right\rangle $
is obtained by composing the map $f\mapsto f[k(1-\alpha)]$ with evaluation
at $\alpha=-y$, both of which are $\mathbb{Q}$-algebra homomorphisms.
\end{proof}
\begin{lem}
\label{l-scalprodHE} Let $\alpha,\beta\in A$ be variables and let
$k$ be an integer. Then:
\begin{enumerate}
\item [\normalfont{(a)}] $H(\beta)[k(1-\alpha)]=\left\langle H(\beta),E(-\alpha)^{k}H^{k}\right\rangle =(1-\text{\ensuremath{\alpha}}\beta)^{k}/(1-\beta)^{k}$
\item [\normalfont{(b)}] $E(\beta)[k(1-\alpha)]=\left\langle E(\beta),E(-\alpha)^{k}H^{k}\right\rangle =(1+\beta)^{k}/(1+\alpha\beta)^{k}$
\end{enumerate}
\end{lem}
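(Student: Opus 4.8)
The two first equalities require no new work: they are instances of the identity $f[k(1-\alpha)]=\langle f,E(-\alpha)^{k}H^{k}\rangle$ from \cite[Lemma 3.2]{Gessel2020} that was already invoked in the proof of Lemma~\ref{l-pkdeshom}, now applied to $f=H(\beta)$ and to $f=E(\beta)$. Here $\beta$ is simply carried along as a coefficient variable, which is harmless since both plethysm and the scalar product have been extended coefficientwise to symmetric functions involving extra variables. So the real task is to evaluate the two plethysms $H(\beta)[k(1-\alpha)]$ and $E(\beta)[k(1-\alpha)]$ and check that they match the claimed rational functions.

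My plan is to pass to power sums, where plethysm acts transparently. Using
\[
H(\beta)=\exp\left(\sum_{n\ge1}\frac{p_{n}}{n}\beta^{n}\right)\quad\text{and}\quad E(\beta)=\exp\left(\sum_{n\ge1}(-1)^{n-1}\frac{p_{n}}{n}\beta^{n}\right),
\]
I would exploit that $f\mapsto f[k(1-\alpha)]$ is a $\mathbb{Q}$-algebra homomorphism (property~(2) of plethysm) that commutes with the formal exponential, together with the single computation $p_{n}[k(1-\alpha)]=k(1-\alpha^{n})$, which comes from property~(1) and the additivity of $p_{n}$ on the virtual alphabet $k(1-\alpha)$. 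Applying the homomorphism inside the exponential then reduces each plethysm to a logarithm computation.

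For part (a) the exponent becomes
\[
\sum_{n\ge1}\frac{k(1-\alpha^{n})}{n}\beta^{n}=-k\log(1-\beta)+k\log(1-\alpha\beta),
\]
so that $H(\beta)[k(1-\alpha)]=(1-\alpha\beta)^{k}/(1-\beta)^{k}$. For part (b) the alternating signs instead produce the exponent $k\log(1+\beta)-k\log(1+\alpha\beta)$, yielding $E(\beta)[k(1-\alpha)]=(1+\beta)^{k}/(1+\alpha\beta)^{k}$. Comparing with the first equalities completes both parts.

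The computation is short once set up, so I do not anticipate a genuine obstacle; the one place demanding care is the reading of the inner plethysm. One must interpret $k(1-\alpha)$ as a virtual alphabet ($k$ copies of $1$ minus $k$ copies of $\alpha$) so that $p_{n}[k(1-\alpha)]=k-k\alpha^{n}$, while simultaneously keeping $\beta$ and the scalars $1/n$ fixed under the \emph{outer} plethysm. I would also note that all the infinite sums and the exponentials converge as formal power series in $\beta$ with coefficients in $\mathbb{Q}[\alpha]$, which is exactly the convergence hypothesis under which plethysm was extended to symmetric functions of unbounded degree, so every manipulation above is legitimate.
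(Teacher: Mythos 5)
Your proposal is correct, but it takes a more self-contained route than the paper. The paper disposes of the lemma in two lines: part (a) is cited as a special case of Lemma 2.4\,(d) of the authors' earlier work \cite{Gessel2020}, and part (b) is then deduced from part (a) together with the identity $E(\beta)=(H(-\beta))^{-1}$, i.e., by substituting $-\beta$ for $\beta$ in (a) and taking reciprocals (legitimate since $f\mapsto f[k(1-\alpha)]$ is an algebra homomorphism). You instead prove both evaluations from scratch: you correctly observe that the first equalities in (a) and (b) are just the identity $f[k(1-\alpha)]=\langle f,E(-\alpha)^{k}H^{k}\rangle$ from \cite[Lemma 3.2]{Gessel2020} already invoked for Lemma \ref{l-pkdeshom}, and then you compute the plethysms via the power-sum exponentials $\log H(\beta)=\sum_{n\geq1}p_{n}\beta^{n}/n$ and $\log E(\beta)=\sum_{n\geq1}(-1)^{n-1}p_{n}\beta^{n}/n$ together with $p_{n}[k(1-\alpha)]=k(1-\alpha^{n})$. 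This is essentially a proof of the cited external lemma rather than a use of it; the computation checks out (your logarithm bookkeeping in both parts yields the stated rational functions, and the paper's (b) agrees with yours via $\beta\mapsto-\beta$ and inversion). Your cautionary remarks are also sound: treating $\beta$ coefficientwise under the outer plethysm is consistent, since $h_{n}[\beta a]=\beta^{n}h_{n}[a]$ makes the coefficientwise extension agree with absorbing $\beta$ into the alphabet, and the exponent has no constant term in $\beta$, so applying the homomorphism inside $\exp$ is valid formally. What the paper's proof buys is brevity and reuse of prior work, including a slick derivation of (b) from (a); what yours buys is transparency --- a uniform, citation-free derivation that explains exactly where the factors $(1-\alpha\beta)^{k}/(1-\beta)^{k}$ and $(1+\beta)^{k}/(1+\alpha\beta)^{k}$ come from.
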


\begin{proof}
Part (a) is a special case of Lemma 2.4 (d) of \cite{Gessel2020}.
Part (b) follows immediately from part (a) and the well-known identity
$E(\beta)=(H(-\beta))^{-1}$.
\end{proof}

The two lemmas below are Lemmas 3.5 and 6.6 of \cite{Gessel2020}, respectively.
\begin{lem}
\label{l-sppleth}Let $f,g\in A$, and let $\mathsf{m}\in A$ be a monic term. Then 
$\left\langle f[X+\mathsf{m}], g\right\rangle$ = $\left\langle f, H[\mathsf{m}X]g\right\rangle$.
\end{lem}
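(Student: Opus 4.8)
The plan is to prove the operator form of the identity and then read off the scalar-product statement via the Cauchy reproducing kernel. First I would record two standard facts about plethysm with the complete-homogeneous generating function. The multiplicativity $H[A+B]=H[A]\,H[B]$ is immediate from $p_{n}[A+B]=p_{n}[A]+p_{n}[B]$ together with the exponential formula $H=\exp(\sum_{n\geq1}p_{n}/n)$. The special evaluation $H[\mathsf{m}X]=\sum_{n\geq0}\mathsf{m}^{n}h_{n}=H(\mathsf{m})$ holds because $\mathsf{m}$ is a monic term free of the $x_{i}$, so that $h_{n}[\mathsf{m}X]=\mathsf{m}^{n}h_{n}$. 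Thus the right-hand side of the lemma is $\langle f,H(\mathsf{m})\,g\rangle$, and it suffices to show $\langle f[X+\mathsf{m}],g\rangle=\langle f,H(\mathsf{m})\,g\rangle$.

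The main device is the reproducing property of the Cauchy kernel. Introducing a second alphabet $Y$, I would combine $H[XY]=\sum_{\lambda}z_{\lambda}^{-1}p_{\lambda}[X]p_{\lambda}[Y]$ with $\langle p_{\lambda},p_{\mu}\rangle=z_{\lambda}\delta_{\lambda\mu}$ to obtain $\langle u[X],H[XY]\rangle_{X}=u[Y]$ for every symmetric function $u$, where the scalar product is taken in the $X$ variables. Since $H[XY]=\sum_{\lambda}h_{\lambda}\,m_{\lambda}[Y]$, testing a $\mathbb{Q}$-linear identity in $g$ against $g=H[XY]$ for all $Y$ is equivalent to testing it against the basis $\{h_{\lambda}\}$; hence it is enough to verify the claimed identity after replacing $g$ by $H[XY]$.

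With $g=H[XY]$ both sides become transparent. On the left, the reproducing property applied to $u[X]=f[X+\mathsf{m}]$ gives $\langle f[X+\mathsf{m}],H[XY]\rangle_{X}=f[Y+\mathsf{m}]$. On the right, multiplicativity gives $H(\mathsf{m})\,H[XY]=H[\mathsf{m}X]\,H[XY]=H[X(Y+\mathsf{m})]$, so the reproducing property against the alphabet $Y+\mathsf{m}$ yields $\langle f,H[X(Y+\mathsf{m})]\rangle_{X}=f[Y+\mathsf{m}]$. Both sides equal $f[Y+\mathsf{m}]$, which completes the verification. Equivalently, one may phrase the computation as the adjointness $\langle f,H(\mathsf{m})\,g\rangle=\langle H(\mathsf{m})^{\perp}f,\,g\rangle$ together with the operator identity $H(\mathsf{m})^{\perp}f=f[X+\mathsf{m}]$, which the kernel computation establishes; I would present the reproducing-kernel version as the primary argument and mention the skewing reformulation as a remark.

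The genuinely delicate points, and where I expect the main obstacle to lie, are not the algebra but the analytic bookkeeping in the completed setting. Since $f,g\in A$ may have unbounded degree, I must justify (i) the reduction to $g=H[XY]$ and the interchange of the infinite sums defining the kernel, and (ii) that every plethysm and scalar product in sight is well defined as a formal power series. My plan is to first prove the identity for homogeneous $f$ and $g$, where all sums are finite and the scalar product is literally defined, and then extend to general $f,g\in A$ degree by degree, invoking the standing convergence hypotheses on the plethysms $f[X+\mathsf{m}]$ and $H[\mathsf{m}X]\,g$ so that both sides are well-defined formal power series and the graded identities assemble into the full statement.
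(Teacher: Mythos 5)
Your argument is correct, but there is no in-paper proof to compare it against: the paper states Lemma \ref{l-sppleth} without proof, importing it verbatim as Lemma 3.5 of the authors' earlier work \cite{Gessel2020}. What you have supplied is the standard self-contained proof of the underlying adjointness, namely that multiplication by $H[\mathsf{m}X]=H(\mathsf{m})$ is adjoint to the plethystic translation $f\mapsto f[X+\mathsf{m}]$. Your two preliminary facts are right ($H[A+B]=H[A]\,H[B]$ from the power-sum exponential, and $h_{n}[\mathsf{m}X]=\mathsf{m}^{n}h_{n}$ because $\mathsf{m}$ is a single monic term free of the $x_{i}$); the reduction to $g=H[XY]$ is legitimate, since $H[XY]=\sum_{\lambda}h_{\lambda}\,m_{\lambda}[Y]$ lets you extract the identity for the basis $\{h_{\lambda}\}$ by taking coefficients of $m_{\lambda}[Y]$, and both sides are linear in $g$; and the kernel computation $\langle f[X+\mathsf{m}],H[XY]\rangle_{X}=f[Y+\mathsf{m}]=\langle f,H[X(Y+\mathsf{m})]\rangle_{X}$ is exactly right, using $\mathsf{m}X+XY=X(Y+\mathsf{m})$ as alphabets. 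Your degree-by-degree treatment of well-definedness also matches the paper's standing convention that scalar products and plethysms of unbounded-degree elements are interpreted formally and implicitly assumed to converge. For comparison, a slightly shorter route---essentially your closing ``skewing'' remark promoted to the main argument---avoids the kernel reduction entirely: expand $f[X+Y]=\sum_{\lambda}s_{\lambda}[Y]\,(s_{\lambda}^{\perp}f)[X]$, where $s_{\lambda}^{\perp}$ denotes the operator adjoint to multiplication by $s_{\lambda}$; since $s_{\lambda}[\mathsf{m}]=\mathsf{m}^{n}$ if $\lambda=(n)$ and $0$ otherwise (a single monomial has at most one-row Schur evaluations), one gets $f[X+\mathsf{m}]=\sum_{n}\mathsf{m}^{n}\,h_{n}^{\perp}f$, and then $\langle h_{n}^{\perp}f,g\rangle=\langle f,h_{n}g\rangle$ summed against $\mathsf{m}^{n}$ gives $\langle f[X+\mathsf{m}],g\rangle=\langle f,H(\mathsf{m})g\rangle$ in one line. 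Your kernel version proves the same operator identity and is equally valid; it simply requires the extra (and correctly handled) justification that testing against $H[XY]$ for all $Y$ suffices.
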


\begin{lem}
\label{l-X+1}Let $\alpha\in A$ be a variable and $\mathsf{m}\in A$
a monic term. Then:
\begin{enumerate}
\item [\normalfont{(a)}] $H(\alpha)[X+\mathsf{m}]=H(\alpha)/(1-\alpha\mathsf{m})$
\item [\normalfont{(b)}] $E(\alpha)[X+\mathsf{m}]=(1+\alpha\mathsf{m})E(\alpha)$
\end{enumerate}
\end{lem}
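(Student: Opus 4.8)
The plan is to exploit the fact that the generating functions $H(\alpha)$ and $E(\alpha)$ factor multiplicatively over the underlying alphabet, so that the plethystic substitution $X\mapsto X+\mathsf{m}$, which adjoins the single letter $\mathsf{m}$ to $\{x_1,x_2,\dots\}$, simply contributes one extra factor. I would prove part (a) directly and then obtain part (b) from it, rather than repeating the work.

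First I would establish part (a) using the power-sum definition of plethysm supplied in the paper. Recall the standard identity $H(\alpha)=\exp\bigl(\sum_{i\geq 1}\alpha^i p_i/i\bigr)$. By the defining property of plethysm, the power sum is additive when one adjoins a letter, so $p_i[X+\mathsf{m}]=p_i+\mathsf{m}^i$, where $p_i[\mathsf{m}]=\mathsf{m}^i$ precisely because $\mathsf{m}$ is a monic term (as in the paper's example $p_i[q^2 t p_m]=q^{2i}t^i p_{im}$). Since $f\mapsto f[X+\mathsf{m}]$ is a $\mathbb{Q}$-algebra homomorphism extending coefficientwise to power series in $\alpha$, I may apply it inside the exponential to get
\[
H(\alpha)[X+\mathsf{m}]=\exp\Bigl(\sum_{i\geq 1}\frac{\alpha^i(p_i+\mathsf{m}^i)}{i}\Bigr)=\exp\Bigl(\sum_{i\geq 1}\frac{\alpha^i p_i}{i}\Bigr)\exp\Bigl(\sum_{i\geq 1}\frac{(\alpha\mathsf{m})^i}{i}\Bigr)=\frac{H(\alpha)}{1-\alpha\mathsf{m}},
\]
using $\sum_{i\geq 1}(\alpha\mathsf{m})^i/i=-\log(1-\alpha\mathsf{m})$. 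This is (a). (Equivalently, one could simply adjoin one factor to the product expansion $H(\alpha)=\prod_i(1-\alpha x_i)^{-1}$, which is the same computation in disguise.)

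For part (b) I would avoid the analogous computation and instead invoke the identity $E(\alpha)=H(-\alpha)^{-1}$ recorded in the proof of Lemma~\ref{l-scalprodHE}. Because $f\mapsto f[X+\mathsf{m}]$ is an algebra homomorphism, it commutes with taking multiplicative inverses of power series with invertible constant term, so
\[
E(\alpha)[X+\mathsf{m}]=\bigl(H(-\alpha)[X+\mathsf{m}]\bigr)^{-1}=\Bigl(\frac{H(-\alpha)}{1+\alpha\mathsf{m}}\Bigr)^{-1}=(1+\alpha\mathsf{m})\,H(-\alpha)^{-1}=(1+\alpha\mathsf{m})E(\alpha),
\]
which is (b). (Running the power-sum argument directly for $E(\alpha)=\exp\bigl(\sum_{i\geq 1}(-1)^{i-1}\alpha^i p_i/i\bigr)$ would give the same answer via $\sum_{i\geq 1}(-1)^{i-1}(\alpha\mathsf{m})^i/i=\log(1+\alpha\mathsf{m})$.)

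The step requiring the most care is the justification for pushing the homomorphism $f\mapsto f[X+\mathsf{m}]$ through the exponential and the infinite sum, i.e.\ that plethysm by $X+\mathsf{m}$ commutes with these operations. This is exactly the convergence caveat the paper flags for symmetric functions of unbounded degree, but here it is mild: each coefficient of $\alpha^i$ in $\log H(\alpha)$ is the single power sum $p_i/i$ of finite degree, so the homomorphism applies termwise and commutes with $\exp$ with respect to the $\alpha$-filtration. Everything else is routine manipulation of the logarithmic series.
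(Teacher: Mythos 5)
Your proof is correct. Note that the paper itself contains no proof of Lemma \ref{l-X+1}: it is imported verbatim as Lemma 6.6 of \cite{Gessel2020}, so there is no in-paper argument to compare against, and a self-contained verification like yours is genuinely useful. Your power-sum route is sound: $H(\alpha)=\exp\bigl(\sum_{i\geq1}p_i\alpha^i/i\bigr)$ and $p_i[X+\mathsf{m}]=p_i+\mathsf{m}^i$ are standard, you invoke the monic hypothesis exactly where it is needed (for $p_i[\mathsf{m}]=\mathsf{m}^i$; a coefficient $c\neq1$ would give $p_i[c\,\mathsf{m}]=c\,\mathsf{m}^i\neq(c\,\mathsf{m})^i$), and deducing (b) from (a) via $E(\alpha)=H(-\alpha)^{-1}$ is legitimate because (a) holds coefficientwise in $\alpha$ (it is equivalent to $h_n[X+\mathsf{m}]=\sum_{k=0}^{n}h_k\,\mathsf{m}^{n-k}$), hence survives the substitution $\alpha\mapsto-\alpha$, and the algebra homomorphism $f\mapsto f[X+\mathsf{m}]$ preserves inverses of series with constant term $1$. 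The more common derivation---and the natural one in the setting of the cited reference---avoids logarithms altogether: compute plethystic addition through the coproduct, giving $h_n[X+\mathsf{m}]=\sum_{k=0}^{n}h_k\,\mathsf{m}^{n-k}$ and $e_n[X+\mathsf{m}]=e_n+\mathsf{m}\,e_{n-1}$, then sum against $\alpha^n$; that route needs only $h_n[\mathsf{m}]=\mathsf{m}^n$ and $e_n[\mathsf{m}]=0$ for $n\geq2$, or equivalently the factorizations $H(\alpha)[X+Y]=H(\alpha)[X]\,H(\alpha)[Y]$ with $H(\alpha)[\mathsf{m}]=1/(1-\alpha\mathsf{m})$ and $E(\alpha)[\mathsf{m}]=1+\alpha\mathsf{m}$. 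The two arguments are essentially equivalent in content; yours concentrates all convergence questions in the single step of pushing the substitution through $\exp$ and the infinite sum, and your closing paragraph disposes of that correctly by working coefficientwise in the $\alpha$-filtration.
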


\subsection{Symmetric function generating functions for descent statistics}

As mentioned in the introduction, we will need generating functions
for the ribbon skew Schur functions $r_{L}$ which keep track of the
descent statistics that we are studying. Such generating functions
were produced in previous work by the present authors and are stated
in the next lemma. Part (a) is essentially a commutative version of
\cite[Lemma 17]{Gessel2014}, whereas (b)\textendash (d) are given
in \cite[Lemma 2.8]{Gessel2020}.
\begin{lem}
\label{l-ribexp} We have \leqnomode
\begin{align*}
\tag{{a}}\qquad\sum_{n=0}^{\infty}t^{n}\prod_{k=0}^{n}H(q^{k}x) & =\sum_{n=0}^{\infty}\frac{\sum_{L\vDash n}q^{\maj(L)}t^{\des(L)}r_{L}}{(1-t)(1-qt)\cdots(1-q^{n}t)}x^{n},
\end{align*}
\begin{multline*}
\tag{{b}}\qquad\frac{1}{1-tE(yx)H(x)}=\frac{1}{1-t}+\\
\frac{1}{1+y}\sum_{n=1}^{\infty}\sum_{L\vDash n}\left(\frac{1+yt}{1-t}\right)^{n+1}\left(\frac{(1+y)^{2}t}{(y+t)(1+yt)}\right)^{\pk(L)+1}\left(\frac{y+t}{1+yt}\right)^{\des(L)+1}r_{L}x^{n},
\end{multline*}
\begin{multline*}
\tag{{c}}\qquad\frac{H(x)}{1-tE(yx)H(x)}=\frac{1}{1-t}+\\
\sum_{n=1}^{\infty}\sum_{L\vDash n}\frac{(1+yt)^{n}}{(1-t)^{n+1}}\left(\frac{(1+y)^{2}t}{(y+t)(1+yt)}\right)^{\lpk(L)}\left(\frac{y+t}{1+yt}\right)^{\des(L)}r_{L}x^{n},
\end{multline*}
and 
\begin{align*}
\tag{{d}}\qquad\frac{1+tH(x)}{1-t^{2}E(x)H(x)} & =\frac{1}{1-t}+\frac{1}{2(1-t)^{2}}\sum_{n=1}^{\infty}\sum_{L\vDash n}\frac{(1+t^{2})^{n}}{(1-t^{2})^{n-1}}\left(\frac{2t}{1+t^{2}}\right)^{\udr(L)}r_{L}x^{n}.
\end{align*}
\end{lem}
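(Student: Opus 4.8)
The plan is to reduce each identity to a statement about the ribbon expansion, in each fixed degree $n$, of the coefficient of $x^n$ in the closed-form left-hand side, and then to match the weight attached to each $r_L$ with the claimed descent-statistic monomial. The single structural tool that drives all four parts is the ribbon product rule $r_\alpha r_\beta = r_{\alpha\beta} + r_{\alpha \rhd \beta}$, where $\alpha\beta$ denotes concatenation of compositions and $\alpha \rhd \beta$ denotes near-concatenation (adding the first part of $\beta$ to the last part of $\alpha$); together with the base cases $h_n = r_{(n)}$ and $e_n = r_{(1^n)}$, this lets me expand any product of the factors appearing on the left into a $\mathbb{Z}[q,t,y]$-combination of ribbons.

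One preliminary point deserves care: the ribbon functions $\{r_L : L \vDash n\}$ are not linearly independent in $\Lambda_{n}$, so coefficients of $r_L$ cannot simply be read off. I would resolve this by carrying out the entire computation in the algebra $\mathrm{NSym}$ of noncommutative symmetric functions, where the ribbon basis $\{R_L\}$ is genuine and the product rule above holds verbatim, and only at the end applying the commutative-image homomorphism $R_L \mapsto r_L$. This makes every coefficient extraction unambiguous and reduces each identity to an equality of scalar weights indexed by compositions.

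For part (a) I would bypass the direct expansion via a functional equation. Writing $F(x,t) \coloneqq \sum_{n \ge 0} t^{n} \prod_{k=0}^{n} H(q^{k}x)$ and peeling off the leading factor $H(x)$, one obtains $F(x,t) = H(x)\bigl(1 + t\,F(qx,t)\bigr)$. I would then check that the right-hand side $G(x,t)$ satisfies the same functional equation and the same value at $x = 0$; matching coefficients of $x^{n}$ reduces this to a recursion for how $\maj$ and $\des$ of a descent composition change when a new initial run is prepended, which is precisely what produces the $q$-shift $x \mapsto qx$ and the factor $(1-q^{n}t)$ in the denominator. For parts (b)--(d) I would expand the geometric series $\frac{1}{1-tE(yx)H(x)} = \sum_{k \ge 0} t^{k}\bigl(E(yx)H(x)\bigr)^{k}$ (and the two variants), reading each factor $E(yx)H(x)$ as a strictly decreasing run glued to a weakly increasing run, so that $y$ marks descents while the junctions between factors create peaks, and use the product rule to collect the resulting ribbons.

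The hard part will be the weight-matching bookkeeping in (b)--(d): for a fixed composition $L \vDash n$ I must sum, over all the ways $L$ is produced from products of varying length, the accumulated $(y,t)$-weights, and show that this sum telescopes into the closed-form factor $\bigl(\frac{(1+y)^{2}t}{(y+t)(1+yt)}\bigr)^{\pk(L)+1}\bigl(\frac{y+t}{1+yt}\bigr)^{\des(L)+1}$ of (b), and analogously for $\lpk$ in (c) and $\udr$ in (d). This requires a precise structural description of the factorizations of a ribbon into row and column pieces in terms of the peaks and descents of $L$, after which the geometric summation over the unconstrained sizes collapses to the stated rational functions, with the prefactors such as $\left(\frac{1+yt}{1-t}\right)^{n+1}$ emerging from that summation. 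A useful sanity check throughout is the specialization $y = 0$, which collapses $E(yx)$ to $1$ and reduces (b) to the descent-only generating function already recorded in the introduction.
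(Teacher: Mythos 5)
The paper does not actually prove this lemma: part (a) is cited as a commutative version of \cite[Lemma 17]{Gessel2014} and parts (b)--(d) as \cite[Lemma 2.8]{Gessel2020}, and those sources do work with noncommutative symmetric functions, so your plan to compute in $\mathrm{NSym}$ with the genuine ribbon basis and the rule $R_{\alpha}R_{\beta}=R_{\alpha\beta}+R_{\alpha\rhd\beta}$, then push forward along the algebra map $R_{L}\mapsto r_{L}$, is faithful to the actual provenance and correctly disposes of the linear dependence of the commutative ribbons. Your treatment of (a) is viable: the functional equation $F(x,t)=H(x)\bigl(1+tF(qx,t)\bigr)$ is correct, and since $1-q^{N}t$ is invertible in the coefficient ring, this equation together with $F(0,t)=1/(1-t)$ determines the series uniquely, so it suffices to check that the right-hand side satisfies it. But be aware that this check is \emph{not} a clean termwise prepend-recursion, contrary to what your sketch suggests: in the coefficient of $R_{M}$ with $M=(M_{1},M_{2},\dots)$ and $|M|=N$, the concatenation term $S_{M_{1}}R_{(M_{2},\dots)}$ carries $q^{N-M_{1}}$ from the substitution $x\mapsto qx$, whereas the target weight requires $q^{M_{1}(1+\des(M_{2},\dots))}$; these agree only after the near-concatenation terms $S_{m}R_{(M_{1}-m,M_{2},\dots)}$ for $1\leq m<M_{1}$ are summed and telescoped against the mismatched denominators $\prod_{j=0}^{N-m}(1-q^{j}t)$. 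This is a genuine (if routine) $q$-identity, provable by induction on $M_{1}$, and it is the actual content of your ``recursion'' step.

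The larger gap is in (b)--(d), where everything before your ``hard part'' paragraph is setup: the collapse of the accumulated weights to $\bigl(\tfrac{(1+y)^{2}t}{(y+t)(1+yt)}\bigr)^{\pk(L)+1}\bigl(\tfrac{y+t}{1+yt}\bigr)^{\des(L)+1}$, and its $\lpk$ and $\udr$ analogues, \emph{is} the lemma, and you supply no mechanism for it. Concretely, in $\sum_{k}t^{k}\bigl(\mathbf{E}(yx)\mathbf{H}(x)\bigr)^{k}$ the blocks may be empty ($e_{0}=h_{0}=1$), so a fixed ribbon $R_{L}$ arises from infinitely many words of varying $k$; the resummation over empty blocks, over the splittings of each increasing run of $L$ among consecutive row blocks, and over the two glue choices at each junction is precisely where the rational functions and the prefactor $\left(\frac{1+yt}{1-t}\right)^{n+1}$ come from, and carrying it out needs a real device (a transfer-matrix computation, Stembridge's enriched $P$-partition theory, or the lifting-and-substitution route of the cited papers). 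Moreover, passing to $\mathrm{NSym}$ forces you to commit to an order of factors, and in (c) and (d) this matters: $\lpk$ and $\udr$ are sensitive to the beginning of the permutation, so the extra $\mathbf{H}(x)$ in the lift of $H(x)/(1-tE(yx)H(x))$, and the $1+t\mathbf{H}(x)$ in (d), must sit on the correct side; with the wrong order you would prove statements about the right-hand analogues of these statistics instead. None of this is unfixable, and your $y=0$ specialization is a sound consistency check, but as written the proposal establishes the correct framework while deferring the computations that constitute the proof.
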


Some of these generating functions have nice power sum expansions
that are expressible in terms of Eulerian polynomials and type B Eulerian
polynomials. The $n$th \textit{type B Eulerian polynomial} $B_{n}(t)$
encodes the distribution of the type B descent number over the $n$th
hyperoctahedral group (see \cite[Section 2.3]{Zhuang2017} for definitions),
but can also be defined by the formula
\[
\frac{B_{n}(t)}{(1-t)^{n+1}}=\sum_{k=0}^{\infty}(2k+1)^{n}t^{k}
\]
analogous to \eqref{e-Eul}. 
Recall that $l(\lambda)$ is the number of parts of the partition
$\lambda$. We also define $o(\lambda)$ to be the number of odd parts
of $\lambda$, and use the notation $\sum_{\lambda\text{ odd}}$ for
a sum over partitions in which every part is odd.

The next lemma is \cite[Lemma 2.9]{Gessel2020}.
\begin{lem}
\label{l-psexp} We have \leqnomode
\[
\tag{{a}}\qquad\frac{1}{1-tE(yx)H(x)}=\sum_{\lambda}\frac{p_{\lambda}}{z_{\lambda}}\frac{A_{l(\lambda)}(t)}{(1-t)^{l(\lambda)+1}}x^{\left|\lambda\right|}\prod_{k=1}^{l(\lambda)}(1-(-y)^{\lambda_{k}})
\]
where $\lambda_{1},\lambda_{2},\dots,\lambda_{l(\lambda)}$ are the
parts of $\lambda$,
\[
\tag{{b}}\qquad\frac{H(x)}{1-tE(x)H(x)}=\sum_{\lambda}\frac{p_{\lambda}}{z_{\lambda}}\frac{B_{o(\lambda)}(t)}{(1-t)^{o(\lambda)+1}}x^{\left|\lambda\right|},
\]
and
\[
\tag{{c}}\qquad\frac{1+tH(x)}{1-tE(x)H(x)}=\sum_{\lambda\:\mathrm{odd}}\frac{p_{\lambda}}{z_{\lambda}}2^{l(\lambda)}\frac{A_{l(\lambda)}(t^{2})}{(1-t^{2})^{l(\lambda)+1}}x^{\left|\lambda\right|}+t\sum_{\lambda}\frac{p_{\lambda}}{z_{\lambda}}\frac{B_{o(\lambda)}(t^{2})}{(1-t^{2})^{o(\lambda)+1}}x^{\left|\lambda\right|}.
\]
\end{lem}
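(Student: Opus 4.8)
The plan is to reduce all three identities to the two resummation formulas already available to us: the Eulerian identity \eqref{e-Eul} and the defining identity $B_{n}(t)/(1-t)^{n+1}=\sum_{k\geq 0}(2k+1)^{n}t^{k}$ for the type B Eulerian polynomials. The engine is the exponential formula for $H$ and $E$. Writing $\log H(z)=\sum_{i\geq 1}p_{i}z^{i}/i$ and $\log E(z)=\sum_{i\geq 1}(-1)^{i-1}p_{i}z^{i}/i$, and using the standard expansion $\exp(\sum_{i\geq 1}c_{i}p_{i}/i)=\sum_{\lambda}z_{\lambda}^{-1}p_{\lambda}\prod_{k}c_{\lambda_{k}}$ (the product running over the parts of $\lambda$), every power of a product of $H$'s and $E$'s can be read off directly in the power sum basis. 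In each case I would expand $1/(1-tF)=\sum_{m\geq 0}t^{m}F^{m}$ (or its $t^{2}$ analogue) geometrically, extract the power sum expansion of $F^{m}$, and then interchange the sums over $m$ and over $\lambda$ so that a single resummation in $m$ produces the Eulerian or type B Eulerian polynomial; the empty partition accounts for the constant $1/(1-t)$ terms automatically.

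For part (a), set $F=E(yx)H(x)$. Combining the two logarithms gives $\log F=\sum_{i\geq 1}(p_{i}/i)x^{i}(1-(-y)^{i})$, so $F^{m}=\sum_{\lambda}z_{\lambda}^{-1}p_{\lambda}\,m^{l(\lambda)}x^{|\lambda|}\prod_{k}(1-(-y)^{\lambda_{k}})$; the exponent $l(\lambda)$ on $m$ is the crucial feature. Multiplying by $t^{m}$ and summing over $m$ turns $\sum_{m\geq 0}m^{l(\lambda)}t^{m}$ into $A_{l(\lambda)}(t)/(1-t)^{l(\lambda)+1}$ by \eqref{e-Eul}, which is exactly the claimed expansion.

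For parts (b) and (c) the key structural observation is that $E(x)H(x)$ involves only odd power sums: $\log(E(x)H(x))=2\sum_{i\text{ odd}}p_{i}x^{i}/i$, since $1-(-1)^{i}$ vanishes for even $i$. Thus $(E(x)H(x))^{m}$ is supported on partitions with all parts odd and carries a factor $(2m)^{l}$. For (b), the extra leading factor $H(x)=\exp(\sum_{i\geq 1}p_{i}x^{i}/i)$ changes the coefficient attached to each odd $i$ from $2m$ to $2m+1$ while leaving even parts with coefficient $1$, so that $H(x)(E(x)H(x))^{m}=\sum_{\lambda}z_{\lambda}^{-1}p_{\lambda}(2m+1)^{o(\lambda)}x^{|\lambda|}$; summing $\sum_{m\geq 0}(2m+1)^{o(\lambda)}t^{m}$ gives $B_{o(\lambda)}(t)/(1-t)^{o(\lambda)+1}$ by the defining identity for $B_{n}(t)$. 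For (c) I would write the left side as $(1+tH(x))/(1-t^{2}E(x)H(x))$ and split it as $1/(1-t^{2}E(x)H(x))+tH(x)/(1-t^{2}E(x)H(x))$, resumming each geometric series in the variable $t^{2}$: the first piece reproduces the odd-partition sum with $A_{l(\lambda)}(t^{2})$, the factor $2^{l(\lambda)}$ arising from $(2m)^{l(\lambda)}=2^{l(\lambda)}m^{l(\lambda)}$, while the second piece, using the same $(2m+1)^{o(\lambda)}$ computation as in (b), produces the $t\,B_{o(\lambda)}(t^{2})$ sum.

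The steps are individually routine, so the main point requiring care is bookkeeping rather than a genuine obstacle: one must correctly track how the leading $H(x)$ merges into the exponential and shifts the per-part coefficients, and one must identify the exact index set of partitions in each sum (all partitions in (b), but the split into odd partitions and all partitions in (c)). Getting the parity structure of $E(x)H(x)$ right, and thereby seeing why (b) and (c) produce type B Eulerian polynomials in $t$ (respectively $t^{2}$) rather than ordinary Eulerian polynomials, is the conceptual crux; once that is in place the resummations follow immediately from \eqref{e-Eul} and the definition of $B_{n}(t)$.
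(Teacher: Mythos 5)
Your proof is correct, and it is worth noting that the paper itself contains no argument for this lemma: it is quoted from \cite[Lemma 2.9]{Gessel2020}. Your derivation---via $\log H(z)=\sum_{i\geq1}p_{i}z^{i}/i$ and $\log E(z)=\sum_{i\geq1}(-1)^{i-1}p_{i}z^{i}/i$, the expansion $\exp\bigl(\sum_{i}c_{i}p_{i}/i\bigr)=\sum_{\lambda}z_{\lambda}^{-1}p_{\lambda}\prod_{k}c_{\lambda_{k}}$, and termwise resummation in $m$ against \eqref{e-Eul} and the defining identity $B_{n}(t)/(1-t)^{n+1}=\sum_{k\geq0}(2k+1)^{n}t^{k}$---is the standard route and, as far as the cited source goes, essentially the same proof. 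All the computations check: in (a), $(-1)^{i-1}y^{i}+1=1-(-y)^{i}$ gives the factor $\prod_{k}(1-(-y)^{\lambda_{k}})$ with exponent $m^{l(\lambda)}$; the parity observation $E(x)H(x)=\exp\bigl(2\sum_{i\ \mathrm{odd}}p_{i}x^{i}/i\bigr)$ correctly restricts (c)'s first sum to odd partitions and yields $(2m)^{l(\lambda)}$, and the shift to $(2m+1)^{o(\lambda)}$ from the extra factor of $H(x)$ in (b) is right (even parts keep coefficient $1$, which is why $o(\lambda)$ rather than $l(\lambda)$ appears). The interchanges of sums are harmless formally, since each coefficient of $p_{\lambda}x^{n}t^{j}$ receives only finitely many contributions, and the empty partition does account for the constant terms, e.g.\ $1/(1-t^{2})+t/(1-t^{2})=1/(1-t)$ in (c).

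One point you should make explicit rather than silent: in part (c) the denominator as printed in the statement, $1-tE(x)H(x)$, is a typo. Your phrase ``write the left side as $(1+tH(x))/(1-t^{2}E(x)H(x))$'' is not an algebraic rewriting---the two expressions are not equal---but an emendation, and it is the correct one: the $t^{2}$'s on the right-hand side force it (the version with $1-tE(x)H(x)$ would instead produce $2^{l(\lambda)}A_{l(\lambda)}(t)/(1-t)^{l(\lambda)+1}$ and $tB_{o(\lambda)}(t)/(1-t)^{o(\lambda)+1}$), and it matches Lemma \ref{l-ribexp}(d) as well as the way the lemma is applied in Theorem \ref{t-udr}. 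With that correction flagged, your split into $1/(1-t^{2}E(x)H(x))$ plus $tH(x)/(1-t^{2}E(x)H(x))$, resummed in $t^{2}$ using $(2m)^{l(\lambda)}=2^{l(\lambda)}m^{l(\lambda)}$ and the $(2m+1)^{o(\lambda)}$ computation from (b), gives precisely the stated right-hand side.
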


\subsection{Sums involving \texorpdfstring{$z_\lambda$}{z} and Stanley's formula for doubly alternating permutations}
\label{s-zsums}
The remainder of this section is not strictly about symmetric functions, but relates to the constants $z_\lambda$ which appear in symmetric function theory, and a connection to an enumeration formula of Stanley obtained via symmetric function techniques. 

To derive some of our formulas later on, we will need to evaluate several sums like
\begin{equation}
\label{e-zsum}
\sum_{\substack{\lambda\vdash n\\l(\lambda) =k}}\frac{1}{z_\lambda}.
\end{equation}
The key to evaluating these sums is the well-known fact that $n!/z_\lambda$ is the number of permutations in $\mathfrak{S}_n$ of cycle type $\lambda$; see, e.g., \cite[Proposition 1.3.2]{Stanley2011} and \cite[pp.~298--299]{Stanley2001}. Thus \eqref{e-zsum} is equal to $c(n,k)/n!$, where as before, $c(n,k)$ is the unsigned Stirling number of the first kind, which counts permutations in $\mathfrak{S}_n$ with $k$ cycles.

Now, let $d(n,k)$ be the number of permutations in $\mathfrak{S}_n$ with $k$ cycles, all of odd length; let $e(n,k)$ be the number of permutations in $\mathfrak{S}_n$ with $k$ odd cycles (and any number of even cycles); and let $f(n,k,m)$ be the number of permutations in $\mathfrak{S}_n$ with $k$ odd cycles and $m$ cycles in total. Note that $d(n,k)$, $e(n,k)$, and $f(n,k,m)$ are 0 if $n-k$ is odd. Then by the same reasoning as above, we have the additional sum evaluations in the next lemma. 
\begin{lem}
\label{l-cdef} We have \leqnomode
\begin{gather*}
\tag{a}\sum_{\substack{\lambda\vdash n\\l(\lambda) =k}}\frac{1}{z_\lambda}=\frac{c(n,k)}{n!},\\
\tag{b} 
\sum_{\substack{\lambda\vdash n\\
\mathrm{odd}\\
l(\lambda)=k
}} 
\frac{1}{z_{\lambda}}= \frac{d(n,k)}{n!},\\
\tag{c}
\sum_{\substack{\lambda\vdash n\\
o(\lambda)=k
}}
\frac{1}{z_{\lambda}}=\frac{e(n,k)}{n!},\\
\shortintertext{and}
\tag{d}
\sum_{\substack{\lambda\vdash n\\
o(\lambda)=k\\
l(\lambda)=m
}}
\frac{1}{z_{\lambda}}=\frac{f(n,k,m)}{n!}.
\end{gather*}
\end{lem}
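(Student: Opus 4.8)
The plan is to lean entirely on the fundamental enumerative fact recalled immediately before the lemma: for any partition $\lambda\vdash n$, the quantity $n!/z_\lambda$ is the number of permutations in $\mathfrak{S}_n$ of cycle type $\lambda$. Writing each summand as $1/z_\lambda=(n!/z_\lambda)/n!$, every sum in the statement becomes $1/n!$ times the number of permutations in $\mathfrak{S}_n$ whose cycle type $\lambda$ ranges over the indicated set of partitions. The whole proof then reduces to translating each condition imposed on $\lambda$ into the corresponding condition on the cycle structure of a permutation, exactly as in the derivation of (a) already sketched in the text.

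The key dictionary I would set up at the outset is that the parts of a cycle type $\lambda$ are precisely the cycle lengths of the permutation; consequently $l(\lambda)$ equals the number of cycles, a part of $\lambda$ is odd if and only if the corresponding cycle has odd length, and $o(\lambda)$ equals the number of odd cycles. With this dictionary each item follows directly. For (a), summing over $\lambda$ with $l(\lambda)=k$ counts permutations with exactly $k$ cycles, which is $c(n,k)$ by definition. For (b), restricting further to odd partitions counts permutations whose $k$ cycles are all of odd length, namely $d(n,k)$. For (c), summing over $\lambda$ with $o(\lambda)=k$ counts permutations with exactly $k$ odd cycles and any number of even cycles, namely $e(n,k)$. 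For (d), imposing both $o(\lambda)=k$ and $l(\lambda)=m$ counts permutations with $k$ odd cycles and $m$ cycles in total, namely $f(n,k,m)$. Dividing each count by $n!$ yields the four claimed identities.

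Because this is a direct bookkeeping argument via the cycle-type correspondence, there is no genuine obstacle; the only point requiring a moment's care is to confirm that the parity-based conditions transfer cleanly, which is immediate once one observes that cycle type records exactly the multiset of cycle lengths. I would also note in passing that the degenerate cases are consistent: if $n-k$ is odd, then no partition of $n$ with $o(\lambda)=k$ exists, since the $k$ odd parts force $n\equiv k\pmod 2$, so the left-hand sides of (b), (c), and (d) are empty and vanish, matching the stated vanishing of $d$, $e$, and $f$.
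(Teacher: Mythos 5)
Your proof is correct and is exactly the paper's argument: the paper proves the lemma by the same appeal to the fact that $n!/z_\lambda$ counts permutations of cycle type $\lambda$, translating each restriction on $\lambda$ into the corresponding cycle-structure condition defining $c(n,k)$, $d(n,k)$, $e(n,k)$, and $f(n,k,m)$. Your added remark on the degenerate parity cases matches the paper's observation that these quantities vanish when $n-k$ is odd.
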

The numbers $c(n,k)$, $d(n,k)$, $e(n,k)$, and $f(n,k,m)$ all have simple exponential generating functions. Let 
\begin{equation*}
L(u) \coloneqq \frac{1}{2} \log\frac{1+u}{1-u}=\sum_{m=1}^\infty \frac{u^{2m-1}}{2m-1}.
\end{equation*}

\begin{prop} 
\label{p-cdef}
\leqnomode
We have
\begin{gather*}
\tag{a}\sum_{n=0}^\infty \sum_{k=0}^n c(n,k) v^k \frac{u^n}{n!} = e^{-v\log(1-u)}=\frac{1}{(1-u)^v},\\
\tag{b}\sum_{n=0}^\infty \sum_{k=0}^n d(n,k) v^k \frac{u^n}{n!} 
   = e^{vL(u)}=\left(\frac{1+u}{1-u}\right)^{v/2},\\
\tag{c}\sum_{n=0}^\infty\sum_{k=0}^n e(n,k) v^k \frac{u^n}{n!}
   =\frac{e^{vL(u)}}{\sqrt{1-u^2}}=\frac{(1+u)^{(v-1)/2}}{(1-u)^{(v+1)/2}},\\
\shortintertext{and}
\tag{d}\sum_{n=0}^\infty\sum_{k,m=0}^n f(n,k,m) v^k w^m \frac{u^n}{n!}
   =\frac{e^{vwL(u)}}{(1-u^2)^{w/2}}
   =\frac{(1+u)^{(v-1)w/2}}{(1-u)^{(v+1)w/2}}.
\end{gather*}

\end{prop}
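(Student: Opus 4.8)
The plan is to invoke the exponential formula for exponential generating functions, treating each permutation as a set of disjoint cycles with each cycle playing the role of a connected component. Since a cycle on a $j$-element set can be formed in $(j-1)!$ ways, the exponential generating function (EGF) for a single cycle — with a variable marking the cycle according to its length and parity — is easy to write down, and the exponential formula then produces the full EGF as the exponential of the single-component EGF. All four parts follow from this single device, the only difference between them being the per-cycle weight.

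First I would record three basic cycle-counting generating functions. The EGF for a single cycle of any length is $\sum_{j\geq1}(j-1)!\,u^j/j! = \sum_{j\geq1}u^j/j = -\log(1-u)$; the EGF for a single odd cycle is $\sum_{j\text{ odd}}u^j/j = L(u)$, by the definition of $L$; and the EGF for a single even cycle is $\sum_{j\text{ even}}u^j/j = -\tfrac12\log(1-u^2)$. These are consistent, since $-\log(1-u) = L(u) - \tfrac12\log(1-u^2)$. For part (a), marking each cycle by $v$ gives the single-component EGF $-v\log(1-u)$, so the exponential formula yields $e^{-v\log(1-u)} = (1-u)^{-v}$. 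For part (b), restricting to odd cycles and marking each by $v$ gives single-component EGF $vL(u)$, hence total EGF $e^{vL(u)}$, which equals $((1+u)/(1-u))^{v/2}$ by the closed form for $L$.

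For part (c), I assign weight $v$ to each odd cycle and weight $1$ to each even cycle, so the single-component EGF is $vL(u) - \tfrac12\log(1-u^2)$; exponentiating gives $e^{vL(u)}/\sqrt{1-u^2}$, and writing $\sqrt{1-u^2}=(1-u)^{1/2}(1+u)^{1/2}$ together with the closed form for $e^{vL(u)}$ collapses this to $(1+u)^{(v-1)/2}(1-u)^{-(v+1)/2}$. For part (d), I assign weight $vw$ to each odd cycle and weight $w$ to each even cycle, so that $v$ marks odd cycles while $w$ marks all cycles; the single-component EGF is then $vwL(u) - \tfrac{w}{2}\log(1-u^2)$, and exponentiating and simplifying exactly as in (c) gives the stated product formula.

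The argument is essentially routine once the exponential formula is set up. The only point requiring care is the correct choice of per-cycle weights in parts (c) and (d) — in particular, in (d) an odd cycle must receive both $v$ and $w$, since it is simultaneously counted as an odd cycle and as a cycle — and the subsequent algebraic simplification of the exponentials into the product closed forms. Neither step presents a genuine obstacle.
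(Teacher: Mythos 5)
Your proof is correct and follows essentially the same route as the paper: the paper also proves these identities via the exponential formula for permutations (citing Stanley's Corollary 5.1.9), writing out part (c) explicitly with odd cycles weighted $v$ and even cycles weighted $1$, and noting that the other parts are similar. Your treatment of part (d), giving odd cycles weight $vw$ and even cycles weight $w$, correctly handles the one subtlety the paper leaves implicit.
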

\begin{proof}
We prove only (c);  the proofs of the other formulas are similar (and (a) is well known). For (c) we want to count permutations in which odd cycles are weighted $v$ and even cycles are weighted 1. So by the exponential formula for permutations \cite[Corollary 5.1.9]{Stanley2001},
we have
\begin{align*}
\sum_{n=0}^\infty \sum_{k=0}^n e(n,k) \frac{u^n}{n!} v^k 
     &=\exp\biggl(v\sum_{m=1}^\infty \frac{u^{2m-1}}{2m-1} +\sum_{m=1}^\infty \frac{u^{2m}}{2m}\biggr)\\
     &=\exp\Bigl(\frac{v}{2}\bigl(-\log(1-u)+\log(1+u)\bigr)+\frac{1}{2}\bigl(-\log(1-u) -\log(1+u)\bigr)\Bigr)\\
       &=\frac{(1+u)^{(v-1)/2}}{(1-u)^{(v+1)/2}}.\qedhere
\end{align*}
\end{proof}

A permutation $\pi$ is called \textit{alternating} if
$\pi(1)>\pi(2)<\pi(3)>\pi(4)<\cdots$, and it is well known that
alternating permutations in $\mathfrak{S}_n$ are counted by the $n$th \textit{Euler number} $E_{n}$, whose exponential generating function is $\sum_{n=0}^\infty E_n x^n/n! = \sec x + \tan x$. The series $L(u)$ defined above makes an appearance in Stanley's work on \textit{doubly alternating permutations}: alternating permutations whose inverses are alternating.
Let $\tilde E_n$ denote the number of doubly alternating permutations in $\mathfrak{S}_n$.
Stanley showed that the ordinary generating function for doubly alternating permutations is
\begin{equation}
\label{e-altalt}
\sum_{n=0}^{\infty} \tilde E_n u^n =
\frac{1}{\sqrt{1-u^{2}}}\sum_{r=0}^{\infty}E_{2r}^{2}\frac{L(u)^{2r}}{(2r)!}+\sum_{r=0}^{\infty}E_{2r+1}^{2}\frac{L(u)^{2r+1}}{(2r+1)!}
\end{equation}
\cite[Theorem 3.1]{Stanley2007}. Stanley's proof of \eqref{e-altalt} uses Foulkes's theorem, but \eqref{e-altalt} can also be recovered from our results, as shall be demonstrated at the end of Sections \ref{ss-pkipk} and \ref{ss-lpkilpk}.

We note that Stanley's formula \eqref{e-altalt} can be expressed in terms of the numbers $d(n,k)$ and $e(n,k)$. By Proposition \ref{p-cdef}, we have
\begin{gather*}
\frac{1}{\sqrt{1-u^2}}\frac{L(u)^{2r}}{(2r)!} = \sum_{n=0}^\infty e(n, 2r) \frac{u^n}{n!}
\qquad\text{and}\qquad
\frac{L(u)^{2r+1}}{(2r+1)!} = \sum_{d=0}^\infty d(n, 2r+1) \frac{u^n}{n!}.
\end{gather*}
Thus, \eqref{e-altalt} is equivalent to the statement that
\begin{equation*}
\tilde E_n = 
\begin{cases}
\displaystyle\frac{1}{n!}\sum_r e(n,2r)E_{2r}^2 = \frac{1}{n!}\sum_{k=0}^n e(n,k) E_{k}^2,& \text{ if $n$ is even,}\\
\displaystyle\frac{1}{n!}\sum_r d(n, 2r+1)E_{2r+1}^2 =  \frac{1}{n!}\sum_{k=0}^n d(n,k) E_{k}^2,& \text{ if $n$ is odd.}
\end{cases}
\end{equation*}

\section{\label{s-pkdes}Two-sided peak and descent statistics}

We are now ready to proceed to the main body of our work. Our first
task will be to derive formulas for the two-sided distribution of
$(\pk,\des)$\textemdash i.e., the joint distribution of $\pk$, $\ipk$,
$\des$, and $\ides$ over $\mathfrak{S}_{n}$\textemdash and then
we shall specialize our results to the (mixed) two-sided statistics
$(\pk,\ipk)$, $(\pk,\ides)$, and $(\des,\ides)$.

\subsection{Peaks, descents, and their inverses}

Define the polynomials $P_{n}^{(\pk,\ipk,\des,\ides)}(y,z,s,t)$ by
\begin{align*}
P_{n}^{(\pk,\ipk,\des,\ides)}(y,z,s,t) & \coloneqq\sum_{\pi\in\mathfrak{S}_{n}}y^{\pk(\pi)+1}z^{\ipk(\pi)+1}s^{\des(\pi)+1}t^{\ides(\pi)+1},
\end{align*}
which encodes the desired two-sided distribution.
\begin{thm}
\label{t-pkdes}We have \leqnomode
\begin{multline*}
\tag{{a}}\frac{1}{(1-s)(1-t)}+\sum_{n=1}^{\infty}\frac{\left(\frac{(1+ys)(1+zt)}{(1-s)(1-t)}\right)^{n+1}P_{n}^{(\pk,\ipk,\des,\ides)}\left(\frac{(1+y)^{2}s}{(y+s)(1+ys)},\frac{(1+z)^{2}t}{(z+t)(1+zt)},\frac{y+s}{1+ys},\frac{z+t}{1+zt}\right)}{(1+y)(1+z)}x^{n}\\
=\sum_{i,j=0}^{\infty}\left(\frac{(1+yx)(1+zx)}{(1-yzx)(1-x)}\right)^{ij}s^{i}t^{j}
\end{multline*}
and, for all $n\geq1$, we have
\begin{multline*}
\tag{{b}}\frac{\left(\frac{(1+ys)(1+zt)}{(1-s)(1-t)}\right)^{n+1}P_{n}^{(\pk,\ipk,\des,\ides)}\left(\frac{(1+y)^{2}s}{(y+s)(1+ys)},\frac{(1+z)^{2}t}{(z+t)(1+zt)},\frac{y+s}{1+ys},\frac{z+t}{1+zt}\right)}{(1+y)(1+z)}\\
=\sum_{\lambda\vdash n}\frac{\prod_{i=1}^{l(\lambda)}(1-(-y)^{\lambda_{i}})(1-(-z)^{\lambda_{i}})}{z_{\lambda}}\frac{A_{l(\lambda)}(s)A_{l(\lambda)}(t)}{(1-s)^{l(\lambda)+1}(1-t)^{l(\lambda)+1}}.
\end{multline*}
\end{thm}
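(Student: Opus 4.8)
The plan is to compute the scalar product
\[
\Phi \coloneqq \left\langle \frac{1}{1-sE(yx)H(x)},\ \frac{1}{1-tE(z)H}\right\rangle
\]
in three different ways. Here the first factor is graded by $x$, so its coefficient of $x^{n}$ is a homogeneous symmetric function of degree $n$, while the second factor is the same generating function with $x$ set to $1$ and $(y,s)$ replaced by $(z,t)$. This grading is precisely what makes $\Phi$ well-defined as a formal power series in $x$ (we pair the degree-$n$ part of each factor), even though $\langle H,H\rangle$ by itself is undefined.

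First I would expand both factors via Lemma \ref{l-ribexp}(b) as sums of ribbon functions $r_{L}$ weighted by peaks and descents, and apply Foulkes's theorem (Theorem \ref{t-foulkes}) to each $\langle r_{L},r_{M}\rangle$. Since $\langle r_{L},r_{M}\rangle=0$ unless $|L|=|M|$, only the diagonal terms $L,M\vDash n$ survive, and the two constant terms together contribute $1/((1-s)(1-t))$ through $\langle 1,1\rangle=1$. Because $\pk$ and $\des$ are descent statistics, summing over the pairs $(L,M)$ with $M=\Comp(\pi^{-1})$ turns $\pk(M)$ and $\des(M)$ into $\ipk(\pi)$ and $\ides(\pi)$, so the weighted sum reassembles into $P_{n}^{(\pk,\ipk,\des,\ides)}$ evaluated at the arguments shown in the theorem, with prefactors coming directly from Lemma \ref{l-ribexp}(b). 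This identifies $\Phi$ with the left-hand side of (a), and its coefficient of $x^{n}$ with the left-hand side of (b).

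To obtain the right-hand side of (a), I would instead expand each factor as a geometric series, giving $\Phi=\sum_{i,j\geq0}s^{i}t^{j}\langle(E(yx)H(x))^{i},E(z)^{j}H^{j}\rangle$. For fixed $j$, Lemma \ref{l-pkdeshom} says $u\mapsto\langle u,E(z)^{j}H^{j}\rangle$ is a $\mathbb{Q}$-algebra homomorphism; applying it first to the $i$th power and then to the product $E(yx)H(x)$ shows that this scalar product equals $\bigl(\langle E(yx),E(z)^{j}H^{j}\rangle\,\langle H(x),E(z)^{j}H^{j}\rangle\bigr)^{i}$. Matching $E(z)^{j}H^{j}$ to the template $E(-\alpha)^{k}H^{k}$ of Lemma \ref{l-scalprodHE} via $\alpha=-z$ and $k=j$, that lemma evaluates the two inner factors as $(1+yx)^{j}/(1-yzx)^{j}$ and $(1+zx)^{j}/(1-x)^{j}$, whence
\[
\langle(E(yx)H(x))^{i},E(z)^{j}H^{j}\rangle=\left(\frac{(1+yx)(1+zx)}{(1-yzx)(1-x)}\right)^{ij}.
\]
Summing over $i,j$ gives the right-hand side of (a), and equating the two expressions for $\Phi$ proves (a). For (b), I would expand both factors using the power-sum formula of Lemma \ref{l-psexp}(a); since $\langle p_{\lambda},p_{\mu}\rangle=z_{\lambda}$ for $\lambda=\mu$ and $0$ otherwise, only $\lambda=\mu$ contributes and the factor $z_{\lambda}$ cancels one of the two $1/z_{\lambda}$'s. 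Extracting the coefficient of $x^{n}$ restricts the sum to $\lambda\vdash n$ and produces exactly the right-hand side of (b).

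The step I expect to be the main obstacle is the plethystic evaluation of $\langle(E(yx)H(x))^{i},E(z)^{j}H^{j}\rangle$: one must recognize it as an $ij$th power by invoking the homomorphism of Lemma \ref{l-pkdeshom} in the first slot, split the product $E(yx)H(x)$ into its $E$ and $H$ factors, and carefully track the sign substitution $\alpha=-z$ needed to fit the form $E(-\alpha)^{k}H^{k}$ in Lemma \ref{l-scalprodHE}. A secondary point requiring care throughout is the well-definedness of $\Phi$, which relies entirely on the $x$-grading of the first factor forcing a degree-by-degree pairing.
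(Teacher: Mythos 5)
Your proposal is correct and takes essentially the same approach as the paper: it computes the scalar product $\left\langle (1-sE(yx)H(x))^{-1},(1-tE(z)H)^{-1}\right\rangle$ in the same three ways, namely Lemma \ref{l-ribexp}(b) combined with Foulkes's theorem (Theorem \ref{t-foulkes}) to identify the left-hand sides, the geometric-series expansion with Lemmas \ref{l-pkdeshom} and \ref{l-scalprodHE} (with $\alpha=-z$, $k=j$) for the right-hand side of (a), and the power-sum expansion of Lemma \ref{l-psexp}(a) together with $\left\langle p_{\lambda},p_{\mu}\right\rangle=z_{\lambda}\delta_{\lambda\mu}$ for (b). Your additional remarks on well-definedness of the scalar product via the $x$-grading, and on the sign bookkeeping in fitting $E(z)^{j}H^{j}$ to the template $E(-\alpha)^{k}H^{k}$, are accurate refinements of points the paper treats implicitly.
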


\begin{proof}
To prove this theorem, we shall compute $\left\langle (1-sE(yx)H(x))^{-1},(1-tE(z)H)^{-1}\right\rangle $
in three different ways. First, from Lemma \ref{l-ribexp} (b) we
have 
\begin{alignat}{1}
 & \left\langle \frac{1}{1-sE(yx)H(x)},\frac{1}{1-tE(z)H}\right\rangle \nonumber \\
 & \qquad\qquad=\frac{1}{(1-s)(1-t)}+\sum_{m,n=1}^{\infty}\frac{\left(\frac{1+ys}{1-s}\right)^{m+1}\left(\frac{1+zt}{1-t}\right)^{n+1}\sum_{L\vDash m,\,M\vDash n}N_{L,M}\left\langle r_{L},r_{M}\right\rangle }{(1+y)(1+z)}x^{m}\label{e-pkdessc0}
\end{alignat}
where 
\begin{multline*}
N_{L,M}\coloneqq\\
\quad\left(\frac{(1+y)^{2}s}{(y+s)(1+ys)}\right)^{\pk(L)+1}\left(\frac{(1+z)^{2}t}{(z+t)(1+zt)}\right)^{\pk(M)+1}\left(\frac{y+s}{1+ys}\right)^{\des(L)+1}\left(\frac{z+t}{1+zt}\right)^{\des(M)+1}.
\end{multline*}
Upon applying Foulkes's theorem (Theorem \ref{t-foulkes}), Equation
(\ref{e-pkdessc0}) simplifies to {\allowdisplaybreaks
\begin{align}
 & \left\langle \frac{1}{1-sE(yx)H(x)},\frac{1}{1-tE(z)H}\right\rangle \nonumber \\
 & \;=\frac{1}{(1-s)(1-t)}+\sum_{n=1}^{\infty}\frac{\left(\frac{(1+ys)(1+zt)}{(1-s)(1-t)}\right)^{n+1}\sum_{\pi\in\mathfrak{S}_{n}}N_{\Comp(\pi),\Comp(\pi^{-1})}}{(1+y)(1+z)}x^{n}\nonumber \\
 & \;=\frac{1}{(1-s)(1-t)}+\sum_{n=1}^{\infty}\frac{\left(\frac{(1+ys)(1+zt)}{(1-s)(1-t)}\right)^{n+1}P_{n}^{(\pk,\ipk,\des,\ides)}\left(\frac{(1+y)^{2}s}{(y+s)(1+ys)},\frac{(1+z)^{2}t}{(z+t)(1+zt)},\frac{y+s}{1+ys},\frac{z+t}{1+zt}\right)}{(1+y)(1+z)}x^{n}.\label{e-pkdessc1}
\end{align}
}Second, we have 
\begin{alignat}{1}
\left\langle \frac{1}{1-sE(yx)H(x)},\frac{1}{1-tE(z)H}\right\rangle  & =\left\langle \sum_{i=0}^{\infty}E(yx)^{i}H(x)^{i}s^{i},\sum_{j=0}^{\infty}E(z)^{j}H^{j}t^{j}\right\rangle \nonumber \\
 & =\sum_{i,j=0}^{\infty}\left\langle E(yx)^{i}H(x)^{i},E(z)^{j}H^{j}\right\rangle s^{i}t^{j}\nonumber \\
 & =\sum_{i,j=0}^{\infty}\left\langle E(yx),E(z)^{j}H^{j}\right\rangle ^{i}\left\langle H(x),E(z)^{j}H^{j}\right\rangle ^{i}s^{i}t^{j}\nonumber \\
 & =\sum_{i,j=0}^{\infty}\frac{(1+yx)^{ij}}{(1-yzx)^{ij}}\frac{(1+zx)^{ij}}{(1-x)^{ij}}s^{i}t^{j},\label{e-pkdessc2}
\end{alignat}
where the last two steps are obtained using Lemmas \ref{l-pkdeshom}
and \ref{l-scalprodHE}, respectively. Finally, from Lemma \ref{l-psexp}
(a) we have {\allowdisplaybreaks
\begin{align}
 & \left\langle \frac{1}{1-sE(yx)H(x)},\frac{1}{1-tE(z)H}\right\rangle \nonumber \\
 & \qquad\qquad=\sum_{\lambda,\mu}\frac{\prod_{i=1}^{l(\lambda)}(1-(-y)^{\lambda_{i}})\prod_{j=1}^{l(\mu)}(1-(-z)^{\mu_{i}})}{z_{\lambda}z_{\mu}}\frac{A_{l(\lambda)}(s)A_{l(\mu)}(t)}{(1-s)^{l(\lambda)+1}(1-t)^{l(\mu)+1}}\left\langle p_{\lambda},p_{\mu}\right\rangle x^{\left|\lambda\right|}\nonumber \\
 & \qquad\qquad=\sum_{\lambda}\frac{\prod_{i=1}^{l(\lambda)}(1-(-y)^{\lambda_{i}})(1-(-z)^{\lambda_{i}})}{z_{\lambda}}\frac{A_{l(\lambda)}(s)A_{l(\lambda)}(t)}{(1-s)^{l(\lambda)+1}(1-t)^{l(\lambda)+1}}x^{\left|\lambda\right|}.\label{e-pkdessc3}
\end{align}
}Combining (\ref{e-pkdessc1}) with (\ref{e-pkdessc2}) yields part
(a), whereas combining (\ref{e-pkdessc1}) with (\ref{e-pkdessc3})
and then extracting coefficients of $x^{n}$ yields part (b).
\end{proof}
The formulas in Theorem \ref{t-pkdes} and others appearing later
in this paper can be ``inverted'' upon making appropriate substitutions.
For example, Theorem \ref{t-pkdes} (a) can be rewritten as
\begin{multline*}
\frac{1}{(1-\alpha)(1-\beta)}+\frac{1}{(1+u)(1+v)}\sum_{n=1}^{\infty}\left(\frac{(1+u\alpha)(1+v\beta)}{(1-\alpha)(1-\beta)}\right)^{n+1}P_{n}^{(\pk,\ipk,\des,\ides)}(y,z,s,t)x^{n}\\
=\sum_{i,j=0}^{\infty}\left(\frac{(1+ux)(1+vx)}{(1-uvx)(1-x)}\right)^{ij}\alpha^{i}\beta^{j}
\end{multline*}
where{\allowdisplaybreaks
\begin{align*}
u&=\frac{1+s^{2}-2ys-(1-s)\sqrt{(1+s)^{2}-4ys}}{2(1-y)s},\\ 
\alpha&=\frac{(1+s)^{2}-2ys-(1+s)\sqrt{(1+s)^{2}-4ys}}{2ys},\\
v&=\frac{1+t^{2}-2zt-(1-t)\sqrt{(1+t)^{2}-4zt}}{2(1-z)t}, \\
\shortintertext{and} 
\beta&=\frac{(1+t)^{2}-2zt-(1+t)\sqrt{(1+t)^{2}-4zt}}{2zt}.
\end{align*}}

\subsection{Peaks and inverse peaks} \label{ss-pkipk}

We will now consider specializations of the polynomials $P_{n}^{(\pk,\ipk,\des,\ides)}(y,z,s,t)$
that give the distributions of the (mixed) two-sided statistics $(\pk,\ipk)$,
$(\pk,\ides)$, and $(\des,\ides)$. Let us begin with the two-sided
distribution of $\pk$, which is encoded by
\begin{align*}
P_{n}^{(\pk,\ipk)}(s,t) & \coloneqq P_{n}^{(\pk,\ipk,\des,\ides)}(s,t,1,1)=\sum_{\pi\in\mathfrak{S}_{n}}s^{\pk(\pi)+1}t^{\ipk(\pi)+1}.
\end{align*}

\begin{thm}
\label{t-pk}We have \leqnomode
\begin{multline*}
\tag{{a}}\frac{1}{(1-s)(1-t)}+\frac{1}{4}\sum_{n=1}^{\infty}\left(\frac{(1+s)(1+t)}{(1-s)(1-t)}\right)^{n+1}P_{n}^{(\pk,\ipk)}\left(\frac{4s}{(1+s)^{2}},\frac{4t}{(1+t)^{2}}\right)x^{n}\\
=\sum_{i,j=0}^{\infty}\left(\frac{1+x}{1-x}\right)^{2ij}s^{i}t^{j}
\end{multline*}
and, for all $n\geq1$, we have
\begin{multline*}
\tag{{b}}\qquad\left(\frac{(1+s)(1+t)}{(1-s)(1-t)}\right)^{n+1}P_{n}^{(\pk,\ipk)}\left(\frac{4s}{(1+s)^{2}},\frac{4t}{(1+t)^{2}}\right)\\
=4\sum_{i,j=0}^{\infty}\sum_{k=0}^{n}{2ij \choose k}{2ij+n-k-1 \choose n-k}s^{i}t^{j}\qquad
\end{multline*}
and
\begin{multline*}
\tag{{c}}\qquad\left(\frac{(1+s)(1+t)}{(1-s)(1-t)}\right)^{n+1}P_{n}^{(\pk,\ipk)}\left(\frac{4s}{(1+s)^{2}},\frac{4t}{(1+t)^{2}}\right)\\
=\frac{1}{n!}\sum_{k=0}^n
4^{k+1}d(n,k)\frac{A_{k}(s)A_{k}(t)}{(1-s)^{k+1}(1-t)^{k+1}}\qquad
\end{multline*}
with $d(n,k)$ as defined in Section \ref{s-zsums}.
\end{thm}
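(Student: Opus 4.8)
The plan is to derive all three parts of Theorem~\ref{t-pk} from Theorem~\ref{t-pkdes} by specializing at $y=z=1$, together with one routine coefficient extraction. The key observation is that this substitution collapses the two descent-tracking arguments of $P_n^{(\pk,\ipk,\des,\ides)}$ while preserving the peak-tracking ones: at $y=z=1$ the third and fourth arguments $\frac{y+s}{1+ys}$ and $\frac{z+t}{1+zt}$ both equal $1$, whereas the first and second arguments become $\frac{4s}{(1+s)^2}$ and $\frac{4t}{(1+t)^2}$. Since $P_n^{(\pk,\ipk)}(s,t)=P_n^{(\pk,\ipk,\des,\ides)}(s,t,1,1)$, this is exactly the specialization needed to produce $P_n^{(\pk,\ipk)}\bigl(\frac{4s}{(1+s)^2},\frac{4t}{(1+t)^2}\bigr)$.

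For part~(a) I would substitute $y=z=1$ directly into Theorem~\ref{t-pkdes}~(a). The prefactor $\bigl(\frac{(1+ys)(1+zt)}{(1-s)(1-t)}\bigr)^{n+1}$ becomes $\bigl(\frac{(1+s)(1+t)}{(1-s)(1-t)}\bigr)^{n+1}$, the constant $(1+y)(1+z)$ in the denominator becomes $4$, and on the right-hand side $\frac{(1+yx)(1+zx)}{(1-yzx)(1-x)}$ becomes $\frac{(1+x)^2}{(1-x)^2}=\bigl(\frac{1+x}{1-x}\bigr)^2$, whose $ij$th power is $\bigl(\frac{1+x}{1-x}\bigr)^{2ij}$. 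This yields the claimed formula verbatim.

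Part~(b) is then obtained by extracting the coefficient of $x^n$ from part~(a). For $n\geq 1$ the left-hand side contributes $\frac{1}{4}\bigl(\frac{(1+s)(1+t)}{(1-s)(1-t)}\bigr)^{n+1}P_n^{(\pk,\ipk)}\bigl(\frac{4s}{(1+s)^2},\frac{4t}{(1+t)^2}\bigr)$. On the right-hand side I would expand $\bigl(\frac{1+x}{1-x}\bigr)^{2ij}=(1+x)^{2ij}(1-x)^{-2ij}$ and form the Cauchy product of $\sum_k\binom{2ij}{k}x^k$ with $\sum_l\binom{2ij+l-1}{l}x^l$, giving $[x^n]\bigl(\frac{1+x}{1-x}\bigr)^{2ij}=\sum_{k=0}^n\binom{2ij}{k}\binom{2ij+n-k-1}{n-k}$. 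Multiplying both sides by $4$ recovers the stated identity.

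For part~(c) I would instead specialize Theorem~\ref{t-pkdes}~(b) at $y=z=1$. The product $\prod_{i=1}^{l(\lambda)}(1-(-y)^{\lambda_i})(1-(-z)^{\lambda_i})$ then becomes $\prod_{i=1}^{l(\lambda)}(1-(-1)^{\lambda_i})^2$, and since $1-(-1)^{\lambda_i}$ equals $0$ for $\lambda_i$ even and $2$ for $\lambda_i$ odd, the entire sum over $\lambda\vdash n$ collapses onto partitions of $n$ into odd parts, each weighted by $4^{l(\lambda)}/z_\lambda$. Grouping by $l(\lambda)=k$ and applying Lemma~\ref{l-cdef}~(b) to rewrite the inner sum $\sum 1/z_\lambda$ as $d(n,k)/n!$, then clearing the leftover factor of $4$, produces the weight $4^{k+1}d(n,k)$ as claimed. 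None of these steps is a genuine obstacle---the argument is essentially bookkeeping. The one small idea is the vanishing-of-even-parts observation in part~(c), which is precisely what replaces the unsigned Stirling numbers $c(n,k)$ of the pure-descent case by the odd-cycle counts $d(n,k)$.
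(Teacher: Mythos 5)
Your proposal is correct and takes essentially the same route as the paper's proof: parts (a) and (c) are obtained by evaluating Theorem \ref{t-pkdes} (a) and (b) at $y=z=1$, with the sum over $\lambda$ in (c) collapsing onto odd partitions and evaluated by Lemma \ref{l-cdef} (b), and part (b) follows by expanding $\left(\frac{1+x}{1-x}\right)^{2ij}$ as a Cauchy product and extracting the coefficient of $x^{n}$. The bookkeeping, including the final factor of $4$ that produces the weight $4^{k+1}d(n,k)$, is exactly right.
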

\begin{proof}
Parts (a) and (c) are obtained from evaluating Theorem \ref{t-pkdes}
(a) and (b), respectively, at $y=z=1$, with the sum on $\lambda$ in (c) evaluated by Lemma \ref{l-cdef} (b). From the identities
\[
(1+x)^{k}=\sum_{n=0}^{k}{k \choose n}x^{n}\quad\text{and}\quad(1-x)^{-k}=\sum_{n=0}^{\infty}{k+n-1 \choose n}x^{n}
\]
we obtain 
\begin{equation}
\left(\frac{1+x}{1-x}\right)^{2ij}=\sum_{n=0}^{\infty}\sum_{k=0}^{n}{2ij \choose k}{2ij+n-k-1 \choose n-k}x^{n}.\label{e-2ij}
\end{equation}
Substituting (\ref{e-2ij}) into (a) and extracting coefficients of
$x^{n}$ yields (b).
\end{proof}
\renewcommand{\arraystretch}{1.3}

\begin{table}[H]
\noindent \begin{raggedright}
\centering
{\small{}}%
\begin{tabular}{|>{\centering}m{0.1in}|>{\raggedright}p{6in}|}
\hline 
\multicolumn{1}{|c|}{{\small{}$n$}} & {\small{}$P_{n}^{(\pk,\ipk)}(s,t)$}\tabularnewline
\hline 
{\small{}$1$} & {\small{}$st$}\tabularnewline
\hline 
{\small{}$2$} & {\small{}$2st$}\tabularnewline
\hline 
{\small{}$3$} & {\small{}$(3s+s^{2})t+(s+s^{2})t^{2}$}\tabularnewline
\hline 
{\small{}$4$} & {\small{}$(4s+4s^{2})t+(4s+12s^{2})t^{2}$}\tabularnewline
\hline 
{\small{}$5$} & {\small{}$(5s+10s^{2}+s^{3})t+(10s+66s^{2}+12s^{3})t^{2}+(s+12s^{2}+3s^{3})t^{3}$}\tabularnewline
\hline 
{\small{}$6$} & {\small{}$(6s+20s^{2}+6s^{3})t+(20s+248s^{2}+148s^{3})t^{2}+(6s+148s^{2}+118s^{3})t^{3}$}\tabularnewline
\hline 
{\small{}$7$} & {\small{}$(7s+35s^{2}+21s^{3}+s^{4})t+(35s+739s^{2}+969s^{3}+81s^{4})t^{2}+(21s+969s^{2}+1719s^{3}+141s^{4})t^{3}+(s+81s^{2}+171s^{3}+19s^{4})t^{4}$}\tabularnewline
\hline 
\end{tabular}{\small\par}
\par\end{raggedright}
\caption{\label{tb-pk}Joint distribution of $\protect\pk$ and $\protect\ipk$
over $\mathfrak{S}_{n}$}
\end{table}

The first several polynomials $P_{n}^{(\pk,\ipk)}(s,t)$ are displayed
in Table \ref{tb-pk}. The coefficients of $s^{k}t$\textemdash equivalently,
the coefficients of $st^{k}$ by symmetry\textemdash are characterized
by the following proposition.
\begin{prop}
\label{p-pkipk0}For any $n\geq1$ and $k\geq0$, the number of permutations
$\pi\in\mathfrak{S}_{n}$ with $\pk(\pi)=k$ and $\ipk(\pi)=0$ is
equal to ${n \choose 2k+1}$.
\end{prop}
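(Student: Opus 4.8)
The plan is to pass to inverses and reduce the statement to a count of binary words. Since $\ipk(\pi)=\pk(\pi^{-1})$, the condition $\ipk(\pi)=0$ says that $\pi^{-1}$ has no peaks. A permutation has no peak exactly when no ascent of its descent word is immediately followed by a descent, which forces the descent word to have the form $D^aA^b$; equivalently, $\pi^{-1}$ is \emph{valley-shaped}, i.e.\ strictly decreasing and then strictly increasing, with the value $1$ sitting at the bottom of the valley. Writing $\sigma=\pi^{-1}$ and using $\pk(\pi)=\ipk(\sigma)$, the proposition becomes the problem of counting valley-shaped $\sigma\in\mathfrak{S}_n$ with $\ipk(\sigma)=k$. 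Such $\sigma$ are in bijection with subsets $T\subseteq\{2,\dots,n\}$ (the values lying to the left of $1$, recorded in decreasing order), and hence with binary words $b_2b_3\cdots b_n$ of length $n-1$ over $\{L,R\}$, where $b_v=L$ if $v\in T$ and $b_v=R$ otherwise.

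The key computation is to read off $\ipk(\sigma)=\pk(\sigma^{-1})$ directly from the word $b_2\cdots b_n$. Here $\sigma^{-1}(v)$ is the position of $v$ in $\sigma$, so I track how these positions compare as $v$ increases: in a valley-shaped $\sigma$ the left (decreasing) values occupy the low positions with larger values appearing earlier, the value $1$ sits just after them, and the right (increasing) values occupy the high positions with larger values appearing later. Comparing $\sigma^{-1}(v)$ with $\sigma^{-1}(v+1)$ in each of the cases $b_{v+1}\in\{L,R\}$ shows that the step $v\to v+1$ is a descent of $\sigma^{-1}$ exactly when $b_{v+1}=L$ and an ascent exactly when $b_{v+1}=R$, independently of $b_v$. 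Consequently $i$ is a peak of $\sigma^{-1}$ iff $b_i=R$ and $b_{i+1}=L$, so $\ipk(\sigma)$ equals the number of occurrences of the factor $RL$ in $b_2\cdots b_n$. I expect this position-bookkeeping to be the main obstacle: it is the only place where the valley structure is used essentially, and the case analysis must be carried out carefully to confirm that the ascent/descent type at each step depends only on the label of the second value.

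It then remains to count binary words of length $m\coloneqq n-1$ with exactly $k$ occurrences of $RL$. Every such word factors uniquely as $L^{a_0}(R^{b_1}L^{c_1})\cdots(R^{b_k}L^{c_k})R^{e}$ with $a_0,e\ge 0$ and all $b_i,c_i\ge 1$: stripping the leading $L$-run and then greedily peeling off a maximal $R$-run followed by a maximal $L$-run consumes exactly one $RL$ factor per block, and after $k$ such blocks the remaining word starts with $R$ and has no $RL$ factor, hence is a pure $R$-run. Setting $b_i=b_i'+1$ and $c_i=c_i'+1$ turns the exponents into $2k+2$ nonnegative integers summing to $m-2k$, and stars-and-bars gives $\binom{(m-2k)+(2k+2)-1}{2k+1}=\binom{m+1}{2k+1}=\binom{n}{2k+1}$ words, which completes the proof. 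As a check, the transfer matrix $\bigl(\begin{smallmatrix}1&q\\1&1\end{smallmatrix}\bigr)$ for these words has eigenvalues $1\pm\sqrt q$ and yields the generating polynomial $\bigl((1+\sqrt q)^{m+1}-(1-\sqrt q)^{m+1}\bigr)/(2\sqrt q)=\sum_{k}\binom{m+1}{2k+1}q^{k}$, in agreement with the stars-and-bars count.
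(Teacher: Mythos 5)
Your proof is correct, but it takes a genuinely different route from the paper. The paper does not argue from scratch: it obtains the proposition as a corollary of a result of Troyka and Zhuang \cite{Troyka2022}\textemdash that for every composition $L\vDash n$ there is \emph{exactly one} permutation $\pi\in\mathfrak{S}_n$ with $\Comp(\pi)=L$ and $\ipk(\pi)=0$\textemdash after which one only needs the classical count of descent sets with $k$ peaks; the paper also remarks that the statement can be extracted from the generating function of Theorem \ref{t-pk}. You instead give a self-contained bijective argument: pass to $\sigma=\pi^{-1}$, observe that $\ipk(\pi)=0$ means $\sigma$ is valley-shaped, encode $\sigma$ by the word $b_2\cdots b_n\in\{L,R\}^{n-1}$ recording which values lie left of $1$, verify by the (correct) case analysis that $v$ is a descent of $\sigma^{-1}$ exactly when $b_{v+1}=L$, so that $\ipk(\sigma)$ is the number of $RL$ factors, and finish by the run-decomposition $L^{a_0}(R^{b_1}L^{c_1})\cdots(R^{b_k}L^{c_k})R^{e}$ and stars-and-bars to get $\binom{n}{2k+1}$. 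Each step checks out (including the edge cases $T=\emptyset$, $T=\{2,\dots,n\}$, and $n=1$), and your argument buys more than the paper's citation does: since the descent set of $\pi=\sigma^{-1}$ is $\{v: b_{v+1}=L\}$, your encoding shows that every subset of $[n-1]$ arises exactly once, i.e., you have in effect reproved the Troyka\textendash Zhuang uniqueness lemma for $\ipk$ rather than assuming it, at the cost of a bookkeeping case analysis that the citation-based route avoids.
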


Proposition \ref{p-pkipk0} was first stated as a corollary of a more
general result of Troyka and Zhuang \cite{Troyka2022}, namely, that
for any composition $L$ of $n\geq1$, there is exactly one permutation
in $\pi\in\mathfrak{S}_{n}$ with descent composition $L$ such that $\ipk(\pi)=0$.
However, it is also possible to prove Proposition \ref{p-pkipk0} directly
from Theorem \ref{t-pk}. 

Next, we obtain a surprisingly simple formula expressing the two-sided
peak polynomials $P_{n}^{(\pk,\ipk)}(s,t)$ in terms of products of
the ordinary peak polynomials
\[
P_{n}^{\pk}(t)\coloneqq\sum_{\pi\in\mathfrak{S}_{n}}t^{\pk(\pi)+1}
\]
giving the distribution of the peak number over $\mathfrak{S}_{n}$.

\begin{thm}
\label{t-pkprod}For all $n\geq1$, we have 
\begin{align*}
P_{n}^{(\pk,\ipk)}(s,t) &= \frac{1}{n!}\sum_{k=0}^n d(n,k) \left((1-s)(1-t)\right)^{\frac{n-k}{2}}P_{k}^{\pk}(s)P_{k}^{\pk}(t). 
\end{align*}
\end{thm}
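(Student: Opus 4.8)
The plan is to deduce Theorem~\ref{t-pkprod} from Theorem~\ref{t-pk}~(c) by replacing the arguments of $P_n^{(\pk,\ipk)}$ with their ``uncooked'' counterparts, exactly as was done to pass from Theorem~\ref{t-pkdes} to the inverted formula displayed after its proof. The left-hand side of Theorem~\ref{t-pk}~(c) is $P_n^{(\pk,\ipk)}$ evaluated at $s\mapsto 4s/(1+s)^2$ and $t\mapsto 4t/(1+t)^2$, prefixed by the factor $\bigl((1+s)(1+t)/(1-s)(1-t)\bigr)^{n+1}$. To recover $P_n^{(\pk,\ipk)}(s,t)$ itself, I would introduce new variables $\sigma,\tau$ via the substitution $s=4\sigma/(1+\sigma)^2$, so that the peak-polynomial argument becomes $s$ and the prefactor becomes a clean power of $(1-\sigma)(1-\tau)$. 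The key algebraic identities here are $1-4\sigma/(1+\sigma)^2 = (1-\sigma)^2/(1+\sigma)^2$ and $1+4\sigma/(1+\sigma)^2 = (1+\sigma)^2/(1+\sigma)^2$ evaluated correctly, i.e.\ $(1-s) = (1-\sigma)^2/(1+\sigma)^2$ when $s = 4\sigma/(1+\sigma)^2$; this is precisely the substitution underlying the ``$4t/(1+t)^2$'' reparametrization that already appears throughout Section~\ref{ss-pkipk}.

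The second ingredient is the analogous substitution for the ordinary peak polynomials. Recall that $P_k^{\pk}(t)$ is the specialization $s\mapsto 1$ of the one-sided theory, and the generating function Lemma~\ref{l-ribexp}~(b)--(c) shows that under $t\mapsto 4\tau/(1+\tau)^2$ the factor $(1-t)^{-(k+1)}$ interacts with $P_k^{\pk}$ to produce the Eulerian polynomial $A_k(\tau)$. Concretely, I expect the identity
\[
A_k(\tau) = \Bigl(\tfrac{1+\tau}{1-\tau}\Bigr)^{k+1}\,(1+\tau)^{-1}\, P_k^{\pk}\!\Bigl(\tfrac{4\tau}{(1+\tau)^2}\Bigr)\cdot(\text{correction})
\]
relating $A_k$ and $P_k^{\pk}$ under this change of variables, which follows from comparing the $y=1$ specialization of Lemma~\ref{l-ribexp}~(b) with the definition of $A_k$. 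Substituting this relation into the right-hand side of Theorem~\ref{t-pk}~(c) converts each factor $A_k(s)A_k(t)/\bigl((1-s)^{k+1}(1-t)^{k+1}\bigr)$ into $P_k^{\pk}(\sigma)P_k^{\pk}(\tau)$ times a power of the prefactor, and the powers of $(1-\sigma)(1-\tau)$ should assemble into the claimed exponent $(n-k)/2$.

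After these substitutions, the bookkeeping is to track how the prefactor $\bigl((1+s)(1+t)/(1-s)(1-t)\bigr)^{n+1}$ on the left and the powers of $(1-\sigma)$, $(1-\tau)$ generated on the right combine. Since $d(n,k)=0$ unless $n-k$ is even (as noted in Section~\ref{s-zsums}), the exponent $(n-k)/2$ is always a nonnegative integer, so the formula is genuinely polynomial---a consistency check I would verify. Matching the overall power of $(1-\sigma)(1-\tau)$ on both sides reduces to the scalar arithmetic $(n+1) - (k+1) = n-k$, distributed as two half-powers coming from the symmetric roles of $\sigma$ and $\tau$; this is where the factor of $\tfrac{1}{2}$ in the exponent arises.

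\textbf{Main obstacle.} The delicate step is establishing the precise relation between $P_k^{\pk}$ and $A_k$ under the substitution $t\mapsto 4\tau/(1+\tau)^2$, together with keeping every prefactor exactly accounted for so that no stray factors of $(1+\sigma)$, $(1+\tau)$, or powers of $4$ survive. The factor $4^{k+1}$ in Theorem~\ref{t-pk}~(c) must cancel exactly against the $4$'s introduced by the substitutions, and the symmetric distribution of the single exponent $n-k$ into two copies of $\tfrac{n-k}{2}$ depends on this cancellation being clean. I expect this to be purely mechanical once the correct form of the $A_k$-versus-$P_k^{\pk}$ identity is pinned down, but it is the place where a sign or normalization error would most easily creep in.
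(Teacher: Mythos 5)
Your plan is, in every structural respect, the paper's own proof: start from Theorem \ref{t-pk} (c), convert each Eulerian polynomial into a peak polynomial via the quadratic change of variables, invert the substitution $s=4u/(1+u)^{2}$, and watch the exponent $\tfrac{n-k}{2}$ emerge from a square root. The one ingredient you leave unpinned is mis-stated in your sketch, so let me pin it down: the needed relation is Stembridge's formula \eqref{e-Apk},
\[
A_{k}(t)=\left(\frac{1+t}{2}\right)^{k+1}P_{k}^{\pk}\left(\frac{4t}{(1+t)^{2}}\right)\qquad(k\geq1),
\]
which carries no factor of $(1-t)$ at all; the $2^{k+1}$ in the denominator is precisely what lets the two copies of the identity (one in $s$, one in $t$) cancel the $4^{k+1}$ in Theorem \ref{t-pk} (c), leaving $\bigl(\tfrac{(1+s)(1+t)}{(1-s)(1-t)}\bigr)^{k+1}$ per summand; dividing by the prefactor $\bigl(\tfrac{(1+s)(1+t)}{(1-s)(1-t)}\bigr)^{n+1}$ then produces $\bigl(\tfrac{(1-u)(1-v)}{(1+u)(1+v)}\bigr)^{n-k}$ after renaming variables. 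The half-power arises not from ``distributing $n-k$ symmetrically'' but from the algebraic identity you already noted: $s=4u/(1+u)^{2}$ gives $1-s=(1-u)^{2}/(1+u)^{2}$, hence $(1-u)/(1+u)=\sqrt{1-s}$ (explicitly, $u=2s^{-1}(1-\sqrt{1-s})-1$), so that $\bigl(\tfrac{(1-u)(1-v)}{(1+u)(1+v)}\bigr)^{n-k}=\left((1-s)(1-t)\right)^{(n-k)/2}$. This is exactly the paper's computation, Equations (\ref{e-pkprod}) and (\ref{e-pksimp}).

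One genuine flaw to flag: your proposed derivation of the key identity---comparing the $y=1$ specialization of Lemma \ref{l-ribexp} (b) with the definition of $A_{k}$---does not work as stated, because the $y=1$ and $y=0$ specializations of that lemma have \emph{different} left-hand sides ($1/(1-tE(x)H(x))$ versus $1/(1-tH(x))$), so the two coefficient extractions are not computing the same quantity and cannot simply be equated. The paper sidesteps this by citing \cite{Stembridge1997} for \eqref{e-Apk}; you should do the same (or supply an independent proof, e.g.\ via valley hopping). Note also that \eqref{e-Apk} fails at $k=0$, which is harmless here since $d(n,0)=0$ for $n\geq1$. With the correct identity inserted, your bookkeeping goes through exactly as you predicted, and the argument coincides with the paper's.
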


Since $d(n,k)=0$ when $n-k$ is odd, the above formula does not involve any square roots.

\begin{proof}
It is known \cite{Stembridge1997} that 
\begin{equation}
A_{n}(t)=\left(\frac{1+t}{2}\right)^{n+1}P_{n}^{\pk}\left(\frac{4t}{(1+t)^{2}}\right)\label{e-Apk}
\end{equation}
for  $n\geq1$. Combining (\ref{e-Apk}) with Theorem \ref{t-pk} (c) and then replacing the variables $s$ and $t$ with $u$ and $v$, respectively, yields 
\begin{multline*}
\quad P_{n}^{(\pk,\ipk)}\left(\frac{4u}{(1+u)^{2}},\frac{4v}{(1+v)^{2}}\right)\\
=\frac{1}{n!}\sum_{k=0}^n d(n,k)\left(\frac{(1-u)(1-v)}{(1+u)(1+v)}\right)^{n-k}P_{k}^{\pk}\left(\frac{4u}{(1+u)^{2}}\right)P_{k}^{\pk}\left(\frac{4v}{(1+v)^{2}}\right).
\end{multline*}
Setting $s=4u/(1+u)^{2}$ and $t=4v/(1+v)^{2}$, we obtain
\begin{equation}
P_{n}^{(\pk,\ipk)}(s,t)
 =\frac{1}{n!}\sum_{k=0}^n d(n,k) \left(\frac{(1-u)(1-v)}{(1+u)(1+v)}\right)^{n-k}P_{k}^{\pk}(s)P_{k}^{\pk}(t).\label{e-pkprod}
\end{equation}
It can be verified that $u=2s^{-1}(1-\sqrt{1-s})-1$ and $v=2t^{-1}(1-\sqrt{1-t})-1$, which lead to
\begin{equation}
\frac{(1-u)(1-v)}{(1+u)(1+v)}=\sqrt{(1-s)(1-t)}.\label{e-pksimp}
\end{equation}
Substituting (\ref{e-pksimp}) into (\ref{e-pkprod}) completes the proof.
\end{proof}

The substitution used in the proof of Theorem \ref{t-pkprod} allows
us to invert the formulas in Theorem \ref{t-pk} and others involving
the same quadratic transformation. For example, Theorem~\ref{t-pk}
(b) is equivalent to
\begin{alignat*}{1}
\frac{P_{n}^{(\pk,\ipk)}(s,t)}{(1-s)^{\frac{n+1}{2}}(1-t)^{\frac{n+1}{2}}} & =4\sum_{i,j=0}^{\infty}\sum_{k=0}^{n}{2ij \choose k}{2ij+n-k-1 \choose n-k}u^{i}v^{j}
\end{alignat*}
where, as before, $u=2s^{-1}(1-\sqrt{1-s})-1$ and $v=2t^{-1}(1-\sqrt{1-t})-1$.
In fact, we can express $u$ and $v$ in terms of the Catalan generating
function
\[
C(x) \coloneqq \frac{1-\sqrt{1-4x}}{2x}=\sum_{n=0}^{\infty}\frac{1}{n+1}{2n \choose n}x^{n}
\]
as $u=(s/4)C(s/4)^{2}$ and $v=(t/4)C(t/4)^{2}$.

If we multiply the formula of Theorem \ref{t-pkprod} by $u^n$ and sum over $n$ using Proposition \ref{p-cdef} (b), we obtain the following generating function for the two-sided peak polynomials $P_{n}^{(\pk,\ipk)}(s,t)$.

\begin{thm}
\label{t-pkipkgf}
We have
\begin{equation*}
\sum_{n=0}^\infty P_{n}^{(\pk,\ipk)}(s,t) u^n = \sum_{k=0}^\infty \frac{1}{k!} \left(\frac{L(\sqrt{(1-s)(1-t)}u)}{\sqrt{(1-s)(1-t)}}\right)^k P_{k}^{\pk}(s)P_{k}^{\pk}(t),
\end{equation*}
with $L(u)$ as defined in Section \ref{s-zsums}.
\end{thm}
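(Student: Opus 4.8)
The plan is to take Theorem~\ref{t-pkprod} as the starting point and promote it to a generating function identity by multiplying through by $u^n$ and summing over all $n \geq 0$. Writing $w \coloneqq (1-s)(1-t)$ for brevity, this yields
\[
\sum_{n=0}^\infty P_{n}^{(\pk,\ipk)}(s,t)\,u^n = \sum_{n=0}^\infty \frac{u^n}{n!}\sum_{k=0}^n d(n,k)\, w^{(n-k)/2}\, P_{k}^{\pk}(s)\,P_{k}^{\pk}(t).
\]
First I would interchange the order of summation, making $k$ the outer index and $n$ the inner one ranging over $n \geq k$ (legitimately, since $d(n,k)=0$ for $n<k$), and pull the factor $P_{k}^{\pk}(s)P_{k}^{\pk}(t)$ outside the inner sum.

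The key manipulation is to separate the $n$-dependence from the $k$-dependence in the exponent $(n-k)/2$ by writing $w^{(n-k)/2} = w^{-k/2}(\sqrt{w})^{n}$, which lets me absorb $(\sqrt{w})^{n}$ into $u^n$. Setting $U \coloneqq \sqrt{w}\,u = \sqrt{(1-s)(1-t)}\,u$, the inner sum becomes $w^{-k/2}\sum_{n} d(n,k)\,U^n/n!$. At this point I would invoke Proposition~\ref{p-cdef}~(b): extracting the coefficient of $v^k$ from $\sum_{n,k} d(n,k)\,v^k U^n/n! = e^{vL(U)}$ gives $\sum_{n} d(n,k)\,U^n/n! = L(U)^k/k!$. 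Substituting this back turns the inner sum into $w^{-k/2}L(U)^k/k!$, and recognizing that $w^{-k/2}L(U)^k = \left(L(\sqrt{w}\,u)/\sqrt{w}\right)^k$ produces exactly the claimed right-hand side.

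The one point demanding care is the apparent presence of the square root $\sqrt{(1-s)(1-t)}$, which could in principle introduce half-integer powers. As noted after Theorem~\ref{t-pkprod}, however, $d(n,k)=0$ whenever $n-k$ is odd, so only even values of $n-k$ contribute; every surviving term therefore carries an integer power of $w$, and the statement is a genuine formal power series identity in $u$ with polynomial coefficients in $s$ and $t$. I do not anticipate any real obstacle beyond this bookkeeping: the argument is essentially a reindexing followed by the exponential-formula evaluation supplied by Proposition~\ref{p-cdef}~(b).
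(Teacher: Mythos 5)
Your proposal is correct and follows essentially the same route as the paper, which obtains Theorem~\ref{t-pkipkgf} precisely by multiplying Theorem~\ref{t-pkprod} by $u^n$, summing over $n$, interchanging the sums, and extracting $\sum_{n} d(n,k)\,u^n/n! = L(u)^k/k!$ from Proposition~\ref{p-cdef}~(b). Your closing remark that $d(n,k)=0$ for $n-k$ odd keeps everything in integer powers of $(1-s)(1-t)$ is the same observation the paper records immediately after Theorem~\ref{t-pkprod}, so there is nothing to add.
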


We can derive the odd part of Stanley's generating function \eqref{e-altalt} for doubly alternating permutations from Theorem \ref{t-pkipkgf}. While an alternating permutation $\pi$ satisfies $\pi(1)>\pi(2)<\pi(3)>\pi(4)<\cdots$, we say that $\pi$ is \textit{reverse alternating} if $\pi(1)<\pi(2)>\pi(3)<\pi(4)>\cdots$. It is evident by symmetry that reverse alternating permutations are also counted by the Euler numbers $E_n$. As shown by Stanley \cite{Stanley2007}, the number of doubly alternating permutations in $\mathfrak{S}_n$ is equal to the number $\tilde{E}_n$ of reverse alternating permutations in $\mathfrak{S}_n$ whose inverses are reverse alternating.

It is readily verified that a permutation $\pi$ in $\mathfrak{S}_n$ has at most $(n-1)/2$ peaks, and has exactly $(n-1)/2$ peaks if and only if $n$ is odd and $\pi$ is reverse alternating. 
Thus, we have
\[
\lim_{s\rightarrow0}P_{k}^{\pk}(s^{-2})s^{k+1}=\lim_{s\rightarrow0}\sum_{\pi\in\mathfrak{S}_{k}}s^{k-2\pk(\pi)-1}=\begin{cases}
0, & \text{if $k$ is even,}\\
E_{k}, & \text{if $k$ is odd.}
\end{cases}
\]
Similarly, we have
\[
\lim_{s\rightarrow0}P_{n}^{(\pk,\ipk)}(s^{-2},t^{-2})s^{n+1}t^{n+1}=\begin{cases}
0, & \text{if $n$ is even,}\\
\tilde{E}_{n}, & \text{if $n$ is odd.}
\end{cases}
\]
So, to obtain Stanley's formula from Theorem \ref{t-pkipkgf}, we first replace $s$ with $s^{-2}$, $t$ with $t^{-2}$, and $u$ with $stu$; then we multiply by $st$ and take the limit as $s,t\to 0$. The substitution takes 
\[\left(\frac{L(\sqrt{(1-s)(1-t)}u)}{\sqrt{(1-s)(1-t)}}\right)^k P_{k}^{\pk}(s)P_{k}^{\pk}(t)
\]
to 
\begin{multline*}
\qquad\left(\frac{L(\sqrt{(s^{2}-1)(t^{2}-1)}u)}{\sqrt{(1-s^{-2})(1-t^{-2})}}\right)^{k}P_{k}^{\pk}(s^{-2})P_{k}^{\pk}(t^{-2})\\
=\left(\frac{L(\sqrt{(s^{2}-1)(t^{2}-1)}u)}{\sqrt{(s^{2}-1)(t^{2}-1)}}\right)^{k}s^{k}P_{k}^{\pk}(s^{-2})t^{k}P_{k}^{\pk}(t^{-2});\qquad
\end{multline*}
multiplying by $st$ and taking $s,t\to 0$ gives $L(u)^k E_k^2$ for $k$ odd and $0$ for $k$ even, as desired.

We will later get the even part of Stanley's generating function \eqref{e-altalt} in a similar way from Theorem \ref{t-lpkilpkprod}, which is the analogue of Theorem \ref{t-pkipkgf} for left peaks.

Before proceeding, we note that every formula like Theorem \ref{t-pkprod} has an analogous generating function like Theorem \ref{t-pkipkgf}. We will only write out these generating functions for 
$(\pk, \ipk)$ and $(\lpk,\ilpk)$---which are used in our rederivation of Stanley's formula---as well as for $(\des,\ides)$.

\subsection{Peaks and inverse descents}

Next, define the polynomials $P_{n}^{(\pk,\ides)}(s,t)$ by 
\[
P_{n}^{(\pk,\ides)}(s,t)\coloneqq P_{n}^{(\pk,\ipk,\des,\ides)}(s,1,1,t)=\sum_{\pi\in\mathfrak{S}_{n}}s^{\pk(\pi)+1}t^{\ides(\pi)+1}.
\]
We omit the proof of the next theorem as it is similar to that of Theorem
\ref{t-pk}, with the main difference being that we specialize at
$y=1$ and $z=0$ (as opposed to $y=z=1$).
\begin{thm}
\label{t-pkides}We have \leqnomode 
\begin{multline*}
\tag{{a}}\qquad\frac{1}{(1-s)(1-t)}+\frac{1}{2}\sum_{n=1}^{\infty}\left(\frac{1+s}{(1-s)(1-t)}\right)^{n+1}P_{n}^{(\pk,\ides)}\left(\frac{4s}{(1+s)^{2}},t\right)\\
=\sum_{i,j=0}^{\infty}\left(\frac{1+x}{1-x}\right)^{ij}s^{i}t^{j}\qquad
\end{multline*}
and, for all $n\geq1$, we have
\begin{align*}
\tag{{b}}\qquad\left(\frac{1+s}{(1-s)(1-t)}\right)^{n+1}P_{n}^{(\pk,\ides)}\left(\frac{4s}{(1+s)^{2}},t\right) & =2\sum_{i,j=0}^{\infty}\sum_{k=0}^{n}{ij \choose k}{ij+n-k-1 \choose n-k}s^{i}t^{j}
\end{align*}
and
\begin{align*}
\tag{{c}} \quad\, \left(\frac{1+t}{(1-s)(1-t)}\right)^{\! n+1}P_{n}^{(\pk,\ides)}\left(\frac{4s}{(1+s)^{2}},t\right) 
 &=\frac{1}{n!}\sum_{k=0}^n 2^{k+1} d(n,k)
 \frac{A_{k}(s)A_{k}(t)}{(1-s)^{k+1}(1-t)^{k+1}}.
\end{align*}
\end{thm}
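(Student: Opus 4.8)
The plan is to reproduce the derivation of Theorem~\ref{t-pk} verbatim, replacing the specialization $y=z=1$ by $y=1$ and $z=0$. The point of taking $z=0$ (rather than $z=1$) is that it suppresses the inverse-peak statistic while preserving the inverse-descent statistic, so that the four-variable polynomial $P_n^{(\pk,\ipk,\des,\ides)}$ degenerates precisely to $P_n^{(\pk,\ides)}$. First I would record how the four arguments of $P_n^{(\pk,\ipk,\des,\ides)}$ in Theorem~\ref{t-pkdes} behave under this substitution: at $y=1$ the peak argument $\frac{(1+y)^2 s}{(y+s)(1+ys)}$ becomes $\frac{4s}{(1+s)^2}$ and the descent argument $\frac{y+s}{1+ys}$ becomes $1$, while at $z=0$ the inverse-peak argument $\frac{(1+z)^2 t}{(z+t)(1+zt)}$ becomes $1$ and the inverse-descent argument $\frac{z+t}{1+zt}$ becomes $t$. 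Consequently $P_n^{(\pk,\ipk,\des,\ides)}(\cdots)$ collapses to $P_n^{(\pk,\ides)}\left(\frac{4s}{(1+s)^2},t\right)$, the prefactor $\frac{(1+ys)(1+zt)}{(1-s)(1-t)}$ becomes $\frac{1+s}{(1-s)(1-t)}$, and the normalizing constant $(1+y)(1+z)$ becomes $2$.

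Part (a) then follows at once by setting $y=1,z=0$ in Theorem~\ref{t-pkdes}(a): the right-hand side $\left(\frac{(1+yx)(1+zx)}{(1-yzx)(1-x)}\right)^{ij}$ simplifies to $\left(\frac{1+x}{1-x}\right)^{ij}$. For part (b) I would expand this factor using $(1+x)^{ij}=\sum_k \binom{ij}{k}x^k$ and $(1-x)^{-ij}=\sum_m\binom{ij+m-1}{m}x^m$ to obtain $[x^n]\left(\frac{1+x}{1-x}\right)^{ij}=\sum_{k=0}^n\binom{ij}{k}\binom{ij+n-k-1}{n-k}$; extracting the coefficient of $x^n$ from (a) and multiplying through by $2$ then yields (b). This is the exact analogue of the passage from Theorem~\ref{t-pk}(a) to Theorem~\ref{t-pk}(b) through \eqref{e-2ij}, with $2ij$ replaced by $ij$.

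For part (c) I would instead specialize Theorem~\ref{t-pkdes}(b) at $y=1,z=0$. The crucial simplification occurs in the plethystic product $\prod_{i=1}^{l(\lambda)}(1-(-y)^{\lambda_i})(1-(-z)^{\lambda_i})$: at $z=0$ each factor $1-(-z)^{\lambda_i}$ equals $1$, whereas at $y=1$ the factor $1-(-1)^{\lambda_i}$ equals $2$ for odd $\lambda_i$ and $0$ for even $\lambda_i$. The product therefore equals $2^{l(\lambda)}$ when all parts of $\lambda$ are odd and vanishes otherwise, restricting the sum over $\lambda\vdash n$ to odd partitions. Grouping the surviving terms by $l(\lambda)=k$ and applying Lemma~\ref{l-cdef}(b), which identifies $\sum_{\lambda\vdash n,\ \mathrm{odd},\ l(\lambda)=k}1/z_\lambda$ with $d(n,k)/n!$, gives $\frac{1}{n!}\sum_{k=0}^n 2^k d(n,k)\frac{A_k(s)A_k(t)}{(1-s)^{k+1}(1-t)^{k+1}}$; clearing the constant $(1+y)(1+z)=2$ from the left-hand side produces the stated power $2^{k+1}$. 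Since the right-hand side is symmetric in $s$ and $t$, one obtains an equivalent identity by exchanging the roles of $s$ and $t$ throughout the left-hand side.

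Every step is a routine rational-function or binomial manipulation, so I do not anticipate a genuine obstacle---only careful bookkeeping. The one conceptual point worth isolating is why this specialization yields a single factor $2^k$ before clearing, rather than the $4^k$ of Theorem~\ref{t-pk}(c): here only the $y=1$ factor contributes both the restriction to odd partitions and a power of two, while the $z=0$ factor degenerates to $1$ instead of supplying a second power of two. It is also worth confirming that the substitution $z=0$ is valid as an identity of formal power series, i.e.\ that no denominator in the expansions being specialized vanishes there; this is immediate since every argument remains well-defined at $z=0$.
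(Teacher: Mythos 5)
Your proposal is correct and is exactly the paper's (omitted) proof: the authors state that Theorem~\ref{t-pkides} is proved like Theorem~\ref{t-pk} but with the specialization $y=1$, $z=0$ of Theorem~\ref{t-pkdes}, and your verification of the four substituted arguments, the prefactor $(1+ys)(1+zt)\mapsto 1+s$, the constant $(1+y)(1+z)=2$, the binomial expansion of $\left(\frac{1+x}{1-x}\right)^{ij}$ for (b), and the collapse of $\prod_i(1-(-y)^{\lambda_i})(1-(-z)^{\lambda_i})$ to $2^{l(\lambda)}$ on odd partitions followed by Lemma~\ref{l-cdef}(b) for (c) is precisely what that recipe produces.

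One point deserves care: your derivation of (c) yields the prefactor $\left(\frac{1+s}{(1-s)(1-t)}\right)^{n+1}$, consistent with parts (a) and (b), whereas the printed statement has $1+t$; the printed version is a typo, as a check at $n=1$ confirms ($P_1^{(\pk,\ides)}(s,t)=st$ and $d(1,1)=1$ force the $1+s$ form). Your closing sentence---that since the right-hand side is symmetric in $s$ and $t$ one may exchange their roles on the left---does not actually recover the printed form: exchanging $s$ and $t$ \emph{throughout} the left-hand side turns $P_{n}^{(\pk,\ides)}\left(\frac{4s}{(1+s)^{2}},t\right)$ into $P_{n}^{(\pk,\ides)}\left(\frac{4t}{(1+t)^{2}},s\right)$ along with the prefactor, so the identity with prefactor $1+t$ but unswapped polynomial argument is simply false. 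Your proof of the corrected statement stands; just drop, or make precise, that last symmetry remark rather than using it to match the misprinted prefactor.
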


We display the first several polynomials $P_{n}^{(\pk,\ides)}(s,t)$
in Table \ref{tb-desipk}, collecting terms with the same power of
$s$ in order to display the symmetry in their coefficients. This
symmetry follows from the fact that the \textit{reverse} $\pi^{r}\coloneqq\pi(n)\cdots\pi(2)\pi(1)$
of a permutation $\pi\in\mathfrak{S}_{n}$ satisfies $\ides(\pi^{r})=n-1-\ides(\pi)$
\cite[Proposition 2.6 (c)]{Zhuang2022} but has the same number of
peaks as $\pi$. 

\renewcommand{\arraystretch}{1.3}
\begin{table}[H]
\centering
\noindent \begin{raggedright}
{\small{}}%
\begin{tabular}{|>{\centering}m{0.1in}|>{\raggedright}p{6in}|}
\hline 
\multicolumn{1}{|c|}{{\small{}$n$}} & {\small{}$P_{n}^{(\pk,\ides)}(s,t)$}\tabularnewline
\hline 
{\small{}$1$} & {\small{}$ts$}\tabularnewline
\hline 
{\small{}$2$} & {\small{}$(t+t^{2})s$}\tabularnewline
\hline 
{\small{}$3$} & {\small{}$(t+2t^{2}+t^{3})s+2t^{2}s^{2}$}\tabularnewline
\hline 
{\small{}$4$} & {\small{}$(t+3t^{2}+3t^{3}+t^{4})s+(8t^{2}+8t^{3})s^{2}$}\tabularnewline
\hline 
{\small{}$5$} & {\small{}$(t+4t^{2}+6t^{3}+4t^{4}+t^{5})s+(20t^{2}+48t^{3}+20t^{4})s^{2}+(2t^{2}+12t^{3}+12t^{4})s^{3}$}\tabularnewline
\hline 
{\small{}$6$} & {\small{}$(t+5t^{2}+10t^{3}+10t^{4}+5t^{5}+t^{6})s+(40t^{2}+168t^{3}+168t^{4}+40t^{5})s^{2}+(12t^{2}+124t^{3}+124t^{4}+12t^{5})s^{3}$}\tabularnewline
\hline 
{\small{}$7$} & {\small{}$(t+6t^{2}+15t^{3}+20t^{4}+15t^{5}+6t^{6}+t^{7})s+(70t^{2}+448t^{3}+788t^{4}+448t^{5}+70t^{6})s^{2}+(42t^{2}+672t^{3}+1452t^{4}+672t^{5}+42t^{6})s^{3}+(2t^{2}+56t^{3}+156t^{4}+56t^{5}+2t^{6})s^{4}$}\tabularnewline
\hline 
\end{tabular}{\small\par}
\par\end{raggedright}
\caption{\label{tb-desipk}Joint distribution of $\protect\pk$ and $\protect\ides$
over $\mathfrak{S}_{n}$}
\end{table}

Furthermore, observe that the coefficients of $t^{k}s$
in $P_{n}^{(\pk,\ides)}(s,t)$ are binomial coefficients; this is
a consequence of the following proposition, which is Corollary 9 of
\cite{Troyka2022}.
\begin{prop}
\label{p-pkides0}For any $n\geq1$ and $k\geq0$, the number of permutations
$\pi\in\mathfrak{S}_{n}$ with $\des(\pi)=k$ and $\ipk(\pi)=0$ is
equal to ${n-1 \choose k}$.
\end{prop}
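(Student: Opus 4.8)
The plan is to prove this directly and combinatorially, working with $\pi$ itself rather than extracting it from the generating function in Theorem~\ref{t-pkides}. The starting observation is that $\ipk(\pi)=0$ means $\pi^{-1}$ has no peaks. A peak at an interior position is precisely an ascent immediately followed by a descent, so a permutation has no peaks iff every descent $i\ge 2$ is immediately preceded by a descent at position $i-1$; equivalently, its descent set is an initial segment $\{1,2,\dots,a\}$ for some $0\le a\le n-1$. Hence $\ipk(\pi)=0$ is equivalent to $\Des(\pi^{-1})=\{1,\dots,a\}$ (that is, $\Comp(\pi^{-1})=(1^a,n-a)$) for some $a$, and I would partition the permutations in question according to the value of $a=\lvert\Des(\pi^{-1})\rvert$.

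Next I would translate the condition $\Des(\pi^{-1})=\{1,\dots,a\}$ into a direct structural description of $\pi$. Using the fact that $i\in\Des(\pi^{-1})$ iff the value $i+1$ lies to the left of $i$ in $\pi$, the condition says that $a+1,a,\dots,1$ appear as a subsequence of values in this (decreasing) order, while $a+1,a+2,\dots,n$ appear in increasing order. Since $a+1$ then precedes every other value, $\pi(1)=a+1$, and the remaining entries form an arbitrary shuffle of the decreasing word $(a,a-1,\dots,1)$ with the increasing word $(a+2,a+3,\dots,n)$. Conversely, every such shuffle yields a permutation satisfying the condition.

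The key computation, and the step I expect to be the main (though modest) obstacle, is to show that \emph{every} such $\pi$ has exactly $a$ descents, independently of the shuffle. Writing $\pi=(a+1)\,w$ where $w$ is the shuffle, I would observe that the ``small'' letters $\{1,\dots,a\}$ form a decreasing run lying below all ``large'' letters $\{a+2,\dots,n\}$, so a junction inside $w$ is a descent exactly when its right-hand entry is small; thus the number of descents within $w$ equals $a$ minus $1$ if $w$ begins with a small letter. The leading junction $(a+1)\,w_1$ is a descent exactly when $w_1$ is small (since $a+1$ exceeds every small letter but lies below every large one). These two contributions cancel, yielding $\des(\pi)=a$ in all cases.

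Finally, setting $a=k$, the permutations with $\ipk(\pi)=0$ and $\des(\pi)=k$ are exactly the shuffles of a word of length $k$ with a word of length $n-1-k$, of which there are $\binom{n-1}{k}$, completing the proof. Alternatively, the same count can be obtained from Theorem~\ref{t-pkides}(b) by extracting the coefficient of $s^{1}$ (the $\pk=0$ terms) after expanding $\tfrac{4s}{(1+s)^2}$ and using inverse symmetry, but the bijective argument above is cleaner and explains the binomial coefficients directly.
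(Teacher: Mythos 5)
Your proof is correct, and it takes a genuinely different route from the paper, which offers no argument at all for Proposition \ref{p-pkides0}: it simply cites the statement as Corollary 9 of \cite{Troyka2022}, where it follows from the stronger theorem (quoted in the paper's proof of Proposition \ref{p-udripk0}) that every composition $L\vDash n$ is the descent composition of exactly one permutation $\pi\in\mathfrak{S}_n$ with $\ipk(\pi)=0$; the count $\binom{n-1}{k}$ is then just the number of compositions of $n$ with $k+1$ parts. Your argument is instead self-contained and checks out at every step: $\ipk(\pi)=0$ is indeed equivalent to $\Des(\pi^{-1})=\{1,\dots,a\}$ (a peak is exactly a descent at position $i\geq 2$ preceded by an ascent, so peaklessness means the descent set is downward closed); the standard equivalence ``$i\in\Des(\pi^{-1})$ iff $i+1$ precedes $i$ in $\pi$'' correctly yields $\pi(1)=a+1$ followed by an arbitrary shuffle $w$ of the decreasing word $a,a-1,\dots,1$ with the increasing word $a+2,\dots,n$; and your cancellation argument for the descent count is sound, since a junction inside $w$ is a descent precisely when its right entry is small, giving $a-1$ interior descents plus the leading descent when $w$ starts small, and $a$ interior descents with no leading descent otherwise, so $\des(\pi)=a$ in all cases and the permutations with $\des(\pi)=k$, $\ipk(\pi)=0$ are exactly the $\binom{n-1}{k}$ shuffles with $a=k$. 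As for what each approach buys: the paper's citation handles all of its ``$\ipk=0$'' propositions (\ref{p-pkipk0}, \ref{p-pkides0}, \ref{p-lpkipk0}, \ref{p-udripk0}) uniformly from one uniqueness theorem, whereas your shuffle description is elementary, exhibits the permutations explicitly, and makes the binomial coefficient transparent; note only that your construction does not by itself recover the Troyka--Zhuang uniqueness statement (you never show distinct shuffles have distinct descent compositions), but the proposition does not require it. Your closing aside about extracting the coefficient of $s$ from Theorem \ref{t-pkides} parallels the paper's own remark that Proposition \ref{p-pkipk0} can be proved directly from Theorem \ref{t-pk}, so that alternative is consistent with the paper as well.
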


In addition to being symmetric, the coefficients of $s^{k}$ in $P_{n}^{(\pk,\ides)}(s,t)$
seem to be unimodal polynomials in $t$, which we know holds for $k=0$
in light of Proposition \ref{p-pkides0}. In the last section of this
paper, we will state the conjecture that the polynomials $[s^{k}]\,P_{n}^{(\pk,\ides)}(s,t)$
are in fact $\gamma$-positive, a property which implies unimodality
and symmetry.

The next formula expresses $P_{n}^{(\pk,\ides)}(s,t)$
in terms of the products $P_{k}^{\pk}(s)A_{k}(t)$. The proof is very
similar to that of Theorem \ref{t-pkprod} and so it is omitted.
\begin{thm}
\label{t-pkidesprod} For all $n\geq1$, we have
\begin{align*}
P_{n}^{(\pk,\ides)}(s,t) & =\frac{1}{n!}\sum_{k=0}^n d(n,k)\left((1-s)^{1/2}(1-t)\right)^{n-k}P_{k}^{\pk}(s)A_{k}(t).
\end{align*}
\end{thm}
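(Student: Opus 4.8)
The plan is to mirror the proof of Theorem~\ref{t-pkprod} exactly, substituting the peak--descent identity \eqref{e-Apk} on the $s$-side only, since we are now dealing with the mixed statistic $(\pk,\ides)$ rather than the two-sided peak statistic. I would start from Theorem~\ref{t-pkides}~(c), which expresses the transformed polynomial $P_{n}^{(\pk,\ides)}(4s/(1+s)^2,t)$ as a sum over $k$ of terms involving the product $A_k(s)A_k(t)/((1-s)^{k+1}(1-t)^{k+1})$, weighted by $2^{k+1}d(n,k)/n!$. The asymmetry between the peak side and the descent side means that I expect to use \eqref{e-Apk} to rewrite only the factor $A_k(s)$ in terms of $P_k^{\pk}$, leaving $A_k(t)$ untouched.

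Concretely, first I would apply the Stembridge identity \eqref{e-Apk} to replace $A_k(s)$ with $\bigl((1+s)/2\bigr)^{k+1}P_k^{\pk}\bigl(4s/(1+s)^2\bigr)$, and then rename the variable $s$ to $u$ (keeping $t$ fixed). After simplifying the powers of $(1+u)/2$ against the factor $2^{k+1}$ and the $(1-u)^{-(k+1)}$ denominator, this should produce a formula of the shape
\[
P_{n}^{(\pk,\ides)}\!\left(\frac{4u}{(1+u)^2},t\right)=\frac{1}{n!}\sum_{k=0}^{n}d(n,k)\left(\frac{1-u}{1+u}\right)^{\!n-k}(1-t)^{n-k}P_{k}^{\pk}\!\left(\frac{4u}{(1+u)^2}\right)A_k(t),
\]
which is the $(\pk,\ides)$ analogue of the intermediate formula \eqref{e-pkprod} in the proof of Theorem~\ref{t-pkprod}. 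The factor $(1-t)^{n-k}$ on the descent side is untouched by the quadratic substitution and simply rides along.

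Next I would perform the substitution $s=4u/(1+u)^2$ to convert the peak argument back to $s$, invoking the same inverse relation $u=2s^{-1}(1-\sqrt{1-s})-1$ used in Theorem~\ref{t-pkprod}. The crucial computation \eqref{e-pksimp}, applied now on only one variable, gives $(1-u)/(1+u)=\sqrt{1-s}$, so the factor $\bigl((1-u)/(1+u)\bigr)^{n-k}$ becomes $(1-s)^{(n-k)/2}=\bigl((1-s)^{1/2}\bigr)^{n-k}$. Combining this with the surviving $(1-t)^{n-k}$ yields precisely the claimed factor $\bigl((1-s)^{1/2}(1-t)\bigr)^{n-k}$, and the proof concludes. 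As in Theorem~\ref{t-pkprod}, the vanishing of $d(n,k)$ when $n-k$ is odd ensures the half-integer power of $1-s$ never produces an actual square root.

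The main obstacle, such as it is, will be bookkeeping the powers of $(1+s)/2$ and $2$: I must verify that applying \eqref{e-Apk} to a single $A_k(s)$ yields exactly one factor of $2^{-(k+1)}$ to cancel the $2^{k+1}$ in Theorem~\ref{t-pkides}~(c), and that the $(1+s)^{k+1}$ numerator correctly recombines with the $(1+s)^{n+1}$ prefactor on the left-hand side of (c) to leave the clean geometric factor $\bigl((1-s)/(1+s)\bigr)^{n-k}$ after the substitution. Since the descent side contributes no quadratic transformation and the $A_k(t)$ factor is carried through verbatim, this is genuinely the only place where the asymmetry between the two statistics requires care; the rest is a faithful single-variable specialization of the argument already carried out for Theorem~\ref{t-pkprod}.
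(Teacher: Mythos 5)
Your proof is correct and takes essentially the same route as the paper, which omits the proof of Theorem \ref{t-pkidesprod} precisely because it is the argument of Theorem \ref{t-pkprod} applied one-sidedly: substitute \eqref{e-Apk} only into the $A_k(s)$ factor of Theorem \ref{t-pkides}~(c), cancel the $2^{k+1}$, and invert the quadratic substitution using $(1-u)/(1+u)=\sqrt{1-s}$ on the $s$-variable alone. You also implicitly (and correctly) read the prefactor in Theorem \ref{t-pkides}~(c) as $(1+s)^{n+1}$ rather than the printed $(1+t)^{n+1}$, which is a typo in the paper, and the $k=0$ edge case of \eqref{e-Apk} is harmless since $d(n,0)=0$ for $n\geq 1$.
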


\subsection{Descents and inverse descents}

To conclude this section, we state the analogous results for the
two-sided Eulerian polynomials
\begin{align*}
A_{n}(s,t) & =P_{n}^{(\pk,\ipk,\des,\ides)}(1,1,s,t)=\sum_{\pi\in\mathfrak{S}_{n}}s^{\des(\pi)+1}t^{\ides(\pi)+1}.
\end{align*}
Recall that parts (a) and (b) were originally due to Carlitz, Roselle, and Scoville \cite{Carlitz1966},
whereas (c) and (d) are new.

\begin{thm}
\label{t-des}We have \leqnomode 
\begin{alignat*}{1}
\tag{{a}}\qquad\sum_{n=0}^{\infty}\frac{A_{n}(s,t)}{(1-s)^{n+1}(1-t)^{n+1}}x^{n} & =\sum_{i,j=0}^{\infty}\frac{s^{i}t^{j}}{(1-x)^{ij}}
\end{alignat*}
and, for all $n\geq1$, we have
\begin{align*}
\tag{{b}}\qquad\frac{A_{n}(s,t)}{(1-s)^{n+1}(1-t)^{n+1}} & =\sum_{i,j=0}^{\infty}{ij+n-1 \choose n}s^{i}t^{j}
\end{align*}
and
\[
\tag{{c}}\qquad A_{n}(s,t)
=\frac{1}{n!}\sum_{k=0}^n c(n,k) \left((1-s)(1-t)\right)^{n-k}A_{k}(s)A_{k}(t).
\]
Moreover, we have
\[
\tag{{d}}\qquad
\sum_{n=0}^\infty A_n(s,t) u^n 
  = \sum_{k=0}^\infty \frac{1}{k!}\left(\log\frac{1}{1-(1-s)(1-t)u}\right)^k \frac{A_k(s)A_k(t)}{(1-s)^k(1-t)^k}.
\]
\end{thm}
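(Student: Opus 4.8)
The plan is to prove Theorem \ref{t-des} by following exactly the same three-way scalar product strategy used in the proof of Theorem \ref{t-pkdes}, but now taking $y=z=1$ from the outset (which collapses the peak variables and leaves only the descent statistics). Parts (a) and (b) are the classical Carlitz--Roselle--Scoville results and can be obtained as the $y=z=1$ specialization of Theorem \ref{t-pkdes} (a), together with the binomial expansion $(1-x)^{-ij}=\sum_n \binom{ij+n-1}{n}x^n$ to pass from the generating function form (a) to the coefficient form (b). For part (c), I would set $y=z=1$ in Theorem \ref{t-pkdes} (b): the products $\prod_{i=1}^{l(\lambda)}(1-(-1)^{\lambda_i})(1-(-1)^{\lambda_i})$ appearing there do \emph{not} simplify nicely, so instead the cleaner route is to rerun the power-sum computation of \eqref{e-pkdessc3} directly. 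The point is that at $y=z=1$ the generating function $1/(1-sH(x))$ has power-sum expansion $\sum_\lambda (p_\lambda/z_\lambda)\,A_{l(\lambda)}(s)(1-s)^{-l(\lambda)-1}x^{|\lambda|}$ (the $y=1$ case of Lemma \ref{l-psexp} (a), noting $E(x)H(x)$ becomes $H(x)$ after the plethystic relation, or more simply reading off the $(\des,\ides)$ specialization).

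Concretely, I would compute $\langle (1-sH(x))^{-1},(1-tH)^{-1}\rangle$ in two ways. On one side, Lemma \ref{l-psexp} (a) at $y=1$ gives a double sum over partitions $\lambda,\mu$ weighted by $\langle p_\lambda,p_\mu\rangle=z_\lambda\,[\lambda=\mu]$, collapsing to $\sum_\lambda z_\lambda^{-1}A_{l(\lambda)}(s)A_{l(\lambda)}(t)(1-s)^{-l(\lambda)-1}(1-t)^{-l(\lambda)-1}x^{|\lambda|}$. On the other side, Foulkes's theorem (Theorem \ref{t-foulkes}) identifies the same scalar product as $\sum_{n}A_n(s,t)(1-s)^{-n-1}(1-t)^{-n-1}x^n$. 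Equating coefficients of $x^n$ and then grouping the partition sum by the common value $l(\lambda)=k$ gives
\[
\frac{A_n(s,t)}{(1-s)^{n+1}(1-t)^{n+1}}
=\sum_{k=0}^n\Biggl(\sum_{\substack{\lambda\vdash n\\ l(\lambda)=k}}\frac{1}{z_\lambda}\Biggr)\frac{A_k(s)A_k(t)}{(1-s)^{k+1}(1-t)^{k+1}}.
\]
Now Lemma \ref{l-cdef} (a) replaces the inner sum by $c(n,k)/n!$, and multiplying through by $(1-s)^{n+1}(1-t)^{n+1}$ (so that the surviving factor is $((1-s)(1-t))^{n-k}$) yields part (c) exactly.

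For part (d), the plan is to take the formula in (c), multiply by $u^n$, and sum over $n\ge 0$, treating the right-hand side as a sum over $k$ of the pieces $A_k(s)A_k(t)(1-s)^{-k}(1-t)^{-k}$. Writing $\theta\coloneqq(1-s)(1-t)$, the $n$-sum of $c(n,k)\,\theta^{n}u^n/n!$ (with the $\theta^{-k}$ absorbed to match the $((1-s)(1-t))^{n-k}$ weighting) is governed by the exponential generating function of the unsigned Stirling numbers, Proposition \ref{p-cdef} (a): $\sum_{n}\sum_k c(n,k)v^k u^n/n!=(1-u)^{-v}=e^{-v\log(1-u)}$. Extracting the coefficient structure in $v$, the sum $\sum_{n\ge k}c(n,k)w^n/n!$ equals $\frac{1}{k!}\bigl(\log\frac{1}{1-w}\bigr)^k$; applying this with $w=\theta u$ produces the factor $\frac{1}{k!}\bigl(\log\frac{1}{1-(1-s)(1-t)u}\bigr)^k$, and the leftover powers of $\theta$ combine with $A_k(s)A_k(t)$ to give $A_k(s)A_k(t)(1-s)^{-k}(1-t)^{-k}$, which is precisely (d). I expect the main obstacle to be purely bookkeeping: keeping the powers of $(1-s)$ and $(1-t)$ straight when converting the ``$((1-s)(1-t))^{n-k}$'' weighting of (c) into the geometric/logarithmic series of (d), and in particular verifying that the Stirling egf from Proposition \ref{p-cdef} (a) reindexes correctly as $\frac{1}{k!}(\log\frac{1}{1-w})^k$ at fixed $k$. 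This is routine but is the only place where a sign or exponent slip could occur, so I would carry it out carefully rather than by appeal to symmetry.
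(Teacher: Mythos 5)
Your overall architecture is the paper's, and your treatments of parts (c) and (d) are essentially correct, but there is one concrete error that would derail parts (a) and (b) as you have written them: the specialization of Theorem \ref{t-pkdes} that yields Theorem \ref{t-des} is $y=z=0$, not $y=z=1$. In Theorem \ref{t-pkdes} the polynomial $P_{n}^{(\pk,\ipk,\des,\ides)}$ appears evaluated at the \emph{transformed} arguments $\bigl(\tfrac{(1+y)^{2}s}{(y+s)(1+ys)},\tfrac{(1+z)^{2}t}{(z+t)(1+zt)},\tfrac{y+s}{1+ys},\tfrac{z+t}{1+zt}\bigr)$, so setting the parameters $y=z=1$ sends the \emph{descent} arguments to $1$ and the peak arguments to $4s/(1+s)^{2}$ and $4t/(1+t)^{2}$: you would be proving Theorem \ref{t-pk}, whose right-hand side is $\sum_{i,j}\bigl(\tfrac{1+x}{1-x}\bigr)^{2ij}s^{i}t^{j}$, not the Carlitz--Roselle--Scoville formula. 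Setting $y=z=0$ instead sends the peak arguments to $1$ and the descent arguments to $s,t$, and collapses the right side of Theorem \ref{t-pkdes} (a) to $\sum_{i,j}s^{i}t^{j}/(1-x)^{ij}$, which is exactly (a); your binomial expansion then gives (b), precisely as the paper does (``we evaluate Theorem \ref{t-pkdes} at $y=z=0$''). Two related side claims are also off: the products $\prod_{i}(1-(-1)^{\lambda_{i}})^{2}$ at $y=z=1$ \emph{do} simplify nicely---they equal $4^{l(\lambda)}$ when all parts of $\lambda$ are odd and vanish otherwise, which is exactly how Theorem \ref{t-pk} (c) is obtained via Lemma \ref{l-cdef} (b)---and the assertion that ``$E(x)H(x)$ becomes $H(x)$ after the plethystic relation'' is false; the correct justification that the relevant series is $1/(1-sH(x))$ is simply that $E(yx)=1$ at $y=0$, and the power-sum expansion you quote is the $y=0$ (not $y=1$) case of Lemma \ref{l-psexp} (a), since $\prod_{k}(1-(-y)^{\lambda_{k}})=1$ there.

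With the labels corrected, your proof of (c)---computing $\left\langle (1-sH(x))^{-1},(1-tH)^{-1}\right\rangle$ once by Foulkes's theorem and once by power sums, extracting coefficients of $x^{n}$, grouping by $l(\lambda)=k$, and applying Lemma \ref{l-cdef} (a)---is sound and is exactly the route sketched in the paper's introduction. (The paper also records a slicker alternative for (c) once (b) is known: expand ${ij+n-1\choose n}=\frac{1}{n!}\sum_{k}c(n,k)(ij)^{k}$ and use $\sum_{i\geq0}i^{k}s^{i}=A_{k}(s)/(1-s)^{k+1}$, which bypasses symmetric functions entirely.) Your derivation of (d) from (c) is correct and matches the paper's: extracting the coefficient of $v^{k}$ in Proposition \ref{p-cdef} (a) gives $\sum_{n\geq k}c(n,k)w^{n}/n!=\frac{1}{k!}\bigl(\log\frac{1}{1-w}\bigr)^{k}$, and with $w=(1-s)(1-t)u$ the powers of $(1-s)(1-t)$ resolve to $(1-s)^{-k}(1-t)^{-k}$ exactly as you anticipated; there is no sign or exponent trap there.
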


Parts (a)--(c) are proven similarly to Theorems \ref{t-pk}, \ref{t-pkprod}, and \ref{t-pkides}
except that we evaluate Theorem~\ref{t-pkdes} at $y=z=0$. Part (c) can also be derived directly from (b):
\begin{align*}
\frac{A_{n}(s,t)}{(1-s)^{n+1}(1-t)^{n+1}}
   &=\sum_{i,j=0}^{\infty}{ij+n-1 \choose n}s^{i}t^{j}\\
   &=\sum_{i,j=0}^{\infty}\frac{1}{n!}\sum_{k=0}^n c(n,k) (ij)^k s^{i}t^{j}\\
   &=\frac{1}{n!}\sum_{k=0}^n c(n,k)\sum_{i=0}^\infty i^k s^i \sum_{j=0}^\infty j^k t^j\\
   &=\frac{1}{n!}\sum_{k=0}^n c(n,k)\frac{A_k(s)}{(1-t)^{k+1}}\frac{A_k(t)}{(1-t)^{k+1}}.
\end{align*}
Part (d) is obtained from (c) using Proposition \ref{p-cdef} (a).

\section{\label{s-lpk}Two-sided left peak statistics}

\subsection{Left peaks, descents, and their inverses}

In this section, we will examine (mixed) two-sided distributions involving
the left peak number $\lpk$. Let us first derive a generating function
formula for the polynomials
\begin{align*}
P_{n}^{(\lpk,\ilpk,\des,\ides)}(y,z,s,t) & \coloneqq\sum_{\pi\in\mathfrak{S}_{n}}y^{\lpk(\pi)}z^{\ilpk(\pi)}s^{\des(\pi)}t^{\ides(\pi)}
\end{align*}
giving the joint distribution of $\lpk$, $\ilpk$, $\des$, and $\ides$
over $\mathfrak{S}_{n}$.
\begin{thm}
\label{t-lpkdes}We have
\begin{multline*}
\frac{1}{(1-s)(1-t)}+\sum_{n=1}^{\infty}\frac{(1+ys)^{n}(1+zt)^{n}P_{n}^{(\lpk,\ilpk,\des,\ides)}\left(\frac{(1+y)^{2}s}{(y+s)(1+ys)},\frac{(1+z)^{2}t}{(z+t)(1+zt)},\frac{y+s}{1+ys},\frac{z+t}{1+zt}\right)}{(1-s)^{n+1}(1-t)^{n+1}}x^{n}\\
=\sum_{i,j=0}^{\infty}\frac{(1+yx)^{i(j+1)}(1+zx)^{(i+1)j}}{(1-yzx)^{ij}(1-x)^{(i+1)(j+1)}}s^{i}t^{j}.
\end{multline*}
\end{thm}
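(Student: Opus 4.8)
The plan is to follow the proof of Theorem~\ref{t-pkdes} almost verbatim, computing a single scalar product in two ways; the only change is to use the left-peak expansion of Lemma~\ref{l-ribexp}~(c) in place of the peak expansion~(b). The object to evaluate is $\langle H(x)/(1-sE(yx)H(x)),\ H/(1-tE(z)H)\rangle$. For the left-hand side, I would apply Lemma~\ref{l-ribexp}~(c) to the first factor (renaming its parameter $t$ to $s$, with degree variable $x$) and again to the second factor (with parameter $t$, peak variable renamed to $z$, and degree variable set to $1$, so $r_M$ carries no power of $x$). Expanding the scalar product as a double sum over $L\vDash n$, $M\vDash m$, the degree-zero terms pair to give $1/((1-s)(1-t))$, the mixed constant/positive-degree cross terms vanish, and $\langle r_L,r_M\rangle=0$ forces $|L|=|M|$. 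Foulkes's theorem (Theorem~\ref{t-foulkes}) then rewrites $\sum_{L,M\vDash n}(\cdots)\langle r_L,r_M\rangle$ as a sum over $\pi\in\mathfrak{S}_n$ with $L=\Comp(\pi)$, $M=\Comp(\pi^{-1})$, and collecting the $\lpk,\ilpk,\des,\ides$ exponents reassembles $P_n^{(\lpk,\ilpk,\des,\ides)}$ at the stated rational arguments, with prefactor $(1+ys)^n(1+zt)^n/((1-s)^{n+1}(1-t)^{n+1})$. This is precisely the left-hand side; note that, because expansion~(c) carries neither a $1/(1+y)$ factor nor ``$+1$'' shifts in its exponents, none appear here.

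For the right-hand side I would expand the geometric series as $H(x)/(1-sE(yx)H(x))=\sum_i s^i E(yx)^iH(x)^{i+1}$ and $H/(1-tE(z)H)=\sum_j t^j E(z)^jH^{j+1}$, reducing the problem to $\langle E(yx)^iH(x)^{i+1},\,E(z)^jH^{j+1}\rangle$. Because each of $E(yx),H(x),E(z),H$ is group-like, the scalar product of two such products equals the product of all pairwise scalar products (the relevant instances of this multiplicativity being Lemmas~\ref{l-desmajhom} and~\ref{l-pkdeshom}). Splitting accordingly reduces everything to the four elementary pairings
\[
\langle E(yx),E(z)\rangle=\tfrac{1}{1-yzx},\qquad \langle E(yx),H\rangle=1+yx,\qquad \langle H(x),E(z)\rangle=1+zx,\qquad \langle H(x),H\rangle=\tfrac{1}{1-x},
\]
each immediate from Schur orthonormality (e.g. $\langle h_n,e_m\rangle=\delta_{nm}[n\le1]$ gives the third). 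Raising these to the powers $i$, $i+1$, $j$, $j+1$ dictated by the split yields $\langle E(yx)^iH(x)^{i+1},E(z)^jH^{j+1}\rangle=(1+yx)^{i(j+1)}(1+zx)^{(i+1)j}/\big((1-yzx)^{ij}(1-x)^{(i+1)(j+1)}\big)$; summing against $s^it^j$ gives the right-hand side, and equating the two evaluations proves the theorem.

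The main obstacle, relative to the peak computation, is the pair of extra factors $H(x)$ and $H$ coming from the numerators of the two generating functions: the second argument $E(z)^jH^{j+1}$ is no longer of the balanced form $E^kH^k$, so Lemma~\ref{l-pkdeshom} and Lemma~\ref{l-scalprodHE} do not apply verbatim. I would sidestep this by splitting the scalar product over the individual group-like factors before evaluating, which both accommodates the unequal exponents $j$ and $j+1$ and brings in the two genuinely new pairings $\langle E(yx),E(z)\rangle$ and $\langle H(x),E(z)\rangle$; these are exactly what produce the asymmetric exponents $(1-yzx)^{ij}$ and $(1+zx)^{(i+1)j}$ in the final formula.
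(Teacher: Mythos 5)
Your proposal is correct, and its first half---expanding $\left\langle H(x)/(1-sE(yx)H(x)),\,H/(1-tE(z)H)\right\rangle$ via Lemma \ref{l-ribexp} (c) and applying Foulkes's theorem (Theorem \ref{t-foulkes}) to assemble $P_n^{(\lpk,\ilpk,\des,\ides)}$ with the prefactor $(1+ys)^n(1+zt)^n/((1-s)^{n+1}(1-t)^{n+1})$, including your observation that no $1/(1+y)$ factors or exponent shifts appear---coincides with the paper's proof. Where you genuinely diverge is the second evaluation. The paper confronts the unbalanced factor $H^{j+1}$ exactly where you predicted the obstacle lies, but resolves it differently: it applies Lemma \ref{l-sppleth} with $\mathsf{m}=1$ to trade the extra $H$ on the right for a plethystic shift $[X+1]$ on the left, evaluates $H(x)[X+1]=H(x)/(1-x)$ and $E(yx)[X+1]=(1+yx)E(yx)$ by Lemma \ref{l-X+1}, and only then invokes the balanced-power homomorphism Lemma \ref{l-pkdeshom} together with Lemma \ref{l-scalprodHE}. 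You instead split the pairing completely into the four elementary scalar products $\left\langle E(yx),E(z)\right\rangle$, $\left\langle E(yx),H\right\rangle$, $\left\langle H(x),E(z)\right\rangle$, and $\left\langle H(x),H\right\rangle$, all of which you evaluate correctly (via $h_n=s_{(n)}$, $e_m=s_{(1^m)}$), and the exponents $ij$, $i(j+1)$, $(i+1)j$, $(i+1)(j+1)$ reassemble the stated right-hand side. The one point you should make explicit is the multiplicativity principle itself: neither Lemma \ref{l-desmajhom} (which requires the second argument to be $H(\mathsf{m})$) nor Lemma \ref{l-pkdeshom} (which requires balanced powers $E(y)^kH^k$) literally covers pairing against $E(z)^jH^{j+1}$---which is precisely why the paper detours through Lemma \ref{l-sppleth}. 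Your fix is nonetheless sound: $H(x)$, $E(z)$, and their kin are group-like for the standard comultiplication ($\Delta h_n=\sum_i h_i\otimes h_{n-i}$, and likewise for $e_n$), products of group-like elements are group-like, and pairing against a group-like element $G$ is an algebra homomorphism since $\left\langle fg,G\right\rangle=\left\langle f\otimes g,\Delta G\right\rangle$; as all four pairwise pairings converge as formal power series, the full splitting is legitimate. Your route is arguably cleaner and bypasses Lemmas \ref{l-sppleth}, \ref{l-X+1}, and \ref{l-scalprodHE} entirely, at the cost of importing one Hopf-algebraic fact the paper never states; the paper's route stays strictly within its stated lemmas.
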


\begin{proof}
From Lemma \ref{l-ribexp} (c) we have
\begin{alignat}{1}
 & \left\langle \frac{H(x)}{1-sE(yx)H(x)},\frac{H}{1-tE(z)H}\right\rangle \nonumber \\
 & \qquad\qquad\qquad=\frac{1}{(1-s)(1-t)}+\sum_{m,n=1}^{\infty}\sum_{\substack{L\vDash m\\
M\vDash n
}
}\frac{(1+ys)^{m}(1+zt)^{n}}{(1-s)^{m+1}(1-t)^{n+1}}\acute{N}_{L,M}\left\langle r_{L},r_{M}\right\rangle x^{m}\label{e-lpkdessc0}
\end{alignat}
where 
\[
\acute{N}_{L,M}\coloneqq\left(\frac{(1+y)^{2}s}{(y+s)(1+ys)}\right)^{\lpk(L)}\left(\frac{(1+z)^{2}t}{(z+t)(1+zt)}\right)^{\ilpk(M)}\left(\frac{y+s}{1+ys}\right)^{\des(L)}\left(\frac{z+t}{1+zt}\right)^{\des(M)}.
\]
By Foulkes's theorem (Theorem \ref{t-foulkes}), Equation (\ref{e-lpkdessc0})
simplifies to
\begin{alignat}{1}
 & \left\langle \frac{1}{1-sE(yx)H(x)},\frac{1}{1-tE(z)H}\right\rangle \nonumber \\
 & \quad=\frac{1}{(1-s)(1-t)}+\sum_{n=1}^{\infty}\frac{(1+ys)^{n}(1+zt)^{n}\sum_{\pi\in\mathfrak{S}_{n}}\acute{N}_{\Comp(\pi),\Comp(\pi^{-1})}}{(1-s)^{n+1}(1-t)^{n+1}}x^{n}\nonumber \\
 & \quad=\frac{1}{(1-s)(1-t)}\nonumber \\
 & \qquad\quad+\sum_{n=1}^{\infty}\frac{(1+ys)^{n}(1+zt)^{n}P_{n}^{(\lpk,\ilpk,\des,\ides)}\left(\frac{(1+y)^{2}s}{(y+s)(1+ys)},\frac{(1+z)^{2}t}{(z+t)(1+zt)},\frac{y+s}{1+ys},\frac{z+t}{1+zt}\right)}{(1-s)^{n+1}(1-t)^{n+1}}x^{n}.\label{e-lpkdessc1}
\end{alignat}
We now compute the same scalar product in a different way. We have{\allowdisplaybreaks
\begin{alignat}{1}
 & \left\langle \frac{H(x)}{1-sE(yx)H(x)},\frac{H}{1-tE(z)H}\right\rangle =\left\langle \sum_{i=0}^{\infty}E(yx)^{i}H(x)^{i+1}s^{i},\sum_{j=0}^{\infty}E(z)^{j}H^{j+1}t^{j}\right\rangle \nonumber \\
 & \qquad\qquad\qquad\qquad=\sum_{i,j=0}^{\infty}\left\langle E(yx)^{i}H(x)^{i+1},E(z)^{j}H^{j+1}\right\rangle s^{i}t^{j}\nonumber \\
 & \qquad\qquad\qquad\qquad=\sum_{i,j=0}^{\infty}\left\langle (E(yx)^{i}H(x)^{i+1})[X+1],E(z)^{j}H^{j}\right\rangle s^{i}t^{j}\nonumber \\
 & \qquad\qquad\qquad\qquad=\sum_{i,j=0}^{\infty}\left\langle (E(yx)[X+1])^{i}(H(x)[X+1])^{i+1},E(z)^{j}H^{j}\right\rangle s^{i}t^{j}\nonumber \\
 & \qquad\qquad\qquad\qquad=\sum_{i,j=0}^{\infty}\left\langle \frac{(1+yx)^{i}}{(1-x)^{i+1}}E(yx)^{i}H(x)^{i+1},E(z)^{j}H^{j}\right\rangle s^{i}t^{j}\nonumber \\
 & \qquad\qquad\qquad\qquad=\sum_{i,j=0}^{\infty}\frac{(1+yx)^{i}}{(1-x)^{i+1}}\left\langle E(yx),E(z)^{j}H^{j}\right\rangle ^{i}\left\langle H(x),E(z)^{j}H^{j}\right\rangle ^{i+1}s^{i}t^{j}\nonumber \\
 & \qquad\qquad\qquad\qquad=\sum_{i,j=0}^{\infty}\frac{(1+yx)^{i(j+1)}}{(1-yzx)^{ij}}\frac{(1+zx)^{(i+1)j}}{(1-x)^{(i+1)(j+1)}}s^{i}t^{j};\label{e-lpkdessc2}
\end{alignat}
}here, we are using Lemma \ref{l-sppleth} for the third equality, and in the last three steps we apply Lemmas \ref{l-X+1}, \ref{l-pkdeshom}, and \ref{l-scalprodHE}, respectively. Combining (\ref{e-lpkdessc1}) and (\ref{e-lpkdessc2}) completes the proof.
\end{proof}

\subsection{Left peaks, inverse peaks, descents, and inverse descents}

Next, let us consider the joint distribution of $\lpk$, $\ipk$,
$\des$, and $\ides$ over $\mathfrak{S}_{n}$. Define
\begin{align*}
P_{n}^{(\lpk,\ipk,\des,\ides)}(y,z,s,t) & \coloneqq\sum_{\pi\in\mathfrak{S}_{n}}y^{\lpk(\pi)}z^{\ipk(\pi)+1}s^{\des(\pi)}t^{\ides(\pi)+1}.
\end{align*}

\begin{thm}
\label{t-pkdesilpkides}We have
\begin{multline*}
\!\frac{1}{(1-s)(1-t)}+\sum_{n=1}^{\infty}\frac{(1+ys)^{n}(1+zt)^{n+1}P_{n}^{(\lpk,\ipk,\des,\ides)}\left(\frac{(1+y)^{2}s}{(y+s)(1+ys)},\frac{(1+z)^{2}t}{(z+t)(1+zt)},\frac{y+s}{1+ys},\frac{z+t}{1+zt}\right)}{(1+z)(1-s)^{n+1}(1-t)^{n+1}}x^{n}\\
=\sum_{i,j=0}^{\infty}\left(\frac{1+yx}{1-yzx}\right)^{ij}\left(\frac{1+zx}{1-x}\right)^{(i+1)j}s^{i}t^{j}.
\end{multline*}
\end{thm}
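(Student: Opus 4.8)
The plan is to compute the single scalar product
$\langle H(x)(1-sE(yx)H(x))^{-1},\,(1-tE(z)H)^{-1}\rangle$ in two different ways, following the template of the proofs of Theorems \ref{t-pkdes} and \ref{t-lpkdes}. For the first evaluation, I would expand each factor using the appropriate part of Lemma \ref{l-ribexp}. The first factor carries an extra $H(x)$ in the numerator, so Lemma \ref{l-ribexp} (c) applies and produces weights $\lpk(L)$ and $\des(L)$ together with the prefactor $(1+ys)^m/(1-s)^{m+1}$. The second factor has no extra $H$, so Lemma \ref{l-ribexp} (b), with its symmetric-function argument specialized to $1$ and its auxiliary variable taken to be $z$, applies and produces weights $\pk(M)$ and $\des(M)$ together with the prefactor $(1+zt)^{n+1}/((1+z)(1-t)^{n+1})$. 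Applying Foulkes's theorem (Theorem \ref{t-foulkes}) turns $\langle r_L,r_M\rangle$ into a sum over $\pi\in\mathfrak{S}_n$ with $\Comp(\pi)=L$ and $\Comp(\pi^{-1})=M$; the $L$-weights then become $\lpk(\pi),\des(\pi)$ and the $M$-weights become $\ipk(\pi),\ides(\pi)$, so the inner sum assembles into $P_n^{(\lpk,\ipk,\des,\ides)}$ evaluated at the stated arguments. This recovers exactly the left-hand side, with the two constant terms multiplying to $\tfrac{1}{(1-s)(1-t)}$ and the degree-mismatched cross terms vanishing since $\langle r_L,r_M\rangle=0$ when $|L|\neq|M|$.

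For the second evaluation I would expand both factors geometrically, writing $H(x)(1-sE(yx)H(x))^{-1}=\sum_i s^i E(yx)^iH(x)^{i+1}$ and $(1-tE(z)H)^{-1}=\sum_j t^j E(z)^jH^j$. The key structural point — and what makes this computation cleaner than that of Theorem \ref{t-lpkdes} — is that the second series already has matched powers $E(z)^jH^j$, so Lemma \ref{l-pkdeshom} applies directly and no $[X+1]$ plethysm is required. By that lemma $f\mapsto\langle f,E(z)^jH^j\rangle$ is an algebra homomorphism, giving $\langle E(yx)^iH(x)^{i+1},E(z)^jH^j\rangle=\langle E(yx),E(z)^jH^j\rangle^i\,\langle H(x),E(z)^jH^j\rangle^{i+1}$. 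Lemma \ref{l-scalprodHE} then evaluates the two basic scalar products as $(1+yx)^j/(1-yzx)^j$ and $(1+zx)^j/(1-x)^j$ respectively.

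Multiplying out the exponents yields $\langle E(yx)^iH(x)^{i+1},E(z)^jH^j\rangle = \big((1+yx)/(1-yzx)\big)^{ij}\big((1+zx)/(1-x)\big)^{(i+1)j}$; note that the asymmetric exponent $(i+1)j$ is exactly what the extra power of $H(x)$ on the left produces, and it is the combinatorial signature of $\lpk$ rather than $\pk$ on that side. Summing against $s^it^j$ gives the right-hand side verbatim, and equating the two evaluations finishes the proof. I do not anticipate a genuine obstacle here: the argument is a direct adaptation of the established template, and the only point demanding care is the bookkeeping of exponents, ensuring that the extra $H(x)$ lands the factor $(1+zx)/(1-x)$ to the power $(i+1)j$ rather than $ij$.
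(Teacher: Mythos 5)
Your proof is correct and is precisely the argument the paper intends: it evaluates the same scalar product $\left\langle H(x)\left(1-sE(yx)H(x)\right)^{-1},\left(1-tE(z)H\right)^{-1}\right\rangle$ in two ways, using Lemma \ref{l-ribexp} (c) and (b) with Foulkes's theorem for the left-hand side and the geometric expansion with Lemmas \ref{l-pkdeshom} and \ref{l-scalprodHE} for the right-hand side, which is exactly what the paper sketches when it omits the details with a pointer to Theorem \ref{t-pkdes} (the paper's displayed scalar product writes the numerator as $H$ rather than $H(x)$, an apparent typo that your version silently corrects). Your key observations---that the matched powers $E(z)^{j}H^{j}$ make Lemma \ref{l-pkdeshom} applicable directly without the $[X+1]$ plethysm step needed in Theorem \ref{t-lpkdes}, and that the extra factor of $H(x)$ is what produces the exponent $(i+1)j$---are exactly the right bookkeeping.
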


\begin{proof}
The two sides of the above equation are obtained from evaluating the
scalar product
\[
\left\langle \frac{H}{1-sE(yx)H(x)},\frac{1}{1-tE(z)H}\right\rangle 
\]
in two different ways; the proof is similar to that of Theorem \ref{t-pkdes}
and so we omit the details.
\end{proof}

\subsection{Left peaks and inverse left peaks} \label{ss-lpkilpk}

We now consider specializations of $P_{n}^{(\lpk,\ilpk,\des,\ides)}(y,z,s,t)$
and $P_{n}^{(\lpk,\ipk,\des,\ides)}(y,z,s,t)$, beginning with the
two-sided left peak polynomials
\begin{align*}
P_{n}^{(\lpk,\ilpk)}(s,t) & \coloneqq P_{n}^{(\lpk,\ilpk,\des,\ides)}(s,t,1,1)=\sum_{\pi\in\mathfrak{S}_{n}}s^{\lpk(\pi)}t^{\ilpk(\pi)}.
\end{align*}

\begin{thm}
\label{t-lpk}We have \leqnomode 
\begin{multline*}
\tag{{a}}\frac{1}{(1-s)(1-t)}+\sum_{n=1}^{\infty}\frac{((1+s)(1+t))^{n}}{((1-s)(1-t))^{n+1}}P_{n}^{(\lpk,\ilpk)}\left(\frac{4s}{(1+s)^{2}},\frac{4t}{(1+t)^{2}}\right)x^{n}\\
=\sum_{i,j=0}^{\infty}\frac{(1+x)^{2ij+i+j}}{(1-x)^{2ij+i+j+1}}s^{i}t^{j}
\end{multline*}
and, for all $n\geq1$, we have
\begin{multline*}
\tag{{b}}\qquad\frac{((1+s)(1+t))^{n}}{((1-s)(1-t))^{n+1}}P_{n}^{(\lpk,\ilpk)}\left(\frac{4s}{(1+s)^{2}},\frac{4t}{(1+t)^{2}}\right)\\
=\sum_{i,j=0}^{\infty}\sum_{k=0}^{n}{2ij+i+j \choose k}{2ij+i+j+n-k \choose n-k}s^{i}t^{j}\qquad
\end{multline*}
and
\begin{alignat*}{1}
\tag{{c}}\quad\,\,\,\frac{((1+s)(1+t))^{n}}{((1-s)(1-t))^{n+1}}P_{n}^{(\lpk,\ilpk)}\left(\frac{4s}{(1+s)^{2}},\frac{4t}{(1+t)^{2}}\right) 
&=\sum_{k=0}^n e(n,k) \frac{B_{k}(s)B_{k}(t)}{(1-s)^{k+1}(1-t)^{k+1}}
\end{alignat*}
with $e(n,k)$ as defined in Section \ref{s-zsums}.
\end{thm}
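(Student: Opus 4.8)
The plan is to follow the template of Theorem~\ref{t-pk}: evaluate the single scalar product $\langle H(x)/(1-sE(yx)H(x)),\,H/(1-tE(z)H)\rangle$ at $y=z=1$ and read three expressions off it. The first two are already furnished by the proof of Theorem~\ref{t-lpkdes}, whose two sides are exactly the two evaluations of this scalar product for general $y,z$; thus part~(a) is simply the $y=z=1$ specialization of that identity. At $y=z=1$ the arguments $(1+y)^2s/((y+s)(1+ys))$ and $(1+z)^2t/((z+t)(1+zt))$ become $4s/(1+s)^2$ and $4t/(1+t)^2$, while $(y+s)/(1+ys)$ and $(z+t)/(1+zt)$ both become $1$, so $P_n^{(\lpk,\ilpk,\des,\ides)}$ collapses to $P_n^{(\lpk,\ilpk)}$; on the right the exponents combine as $i(j+1)+(i+1)j=2ij+i+j$ and $ij+(i+1)(j+1)=2ij+i+j+1$, giving the stated form of~(a).

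For part~(b) I would extract $[x^n]$ from the right-hand side of~(a). Writing $m=2ij+i+j$, the expansions $(1+x)^m=\sum_k\binom{m}{k}x^k$ and $(1-x)^{-(m+1)}=\sum_k\binom{m+k}{k}x^k$ give $[x^n]\,(1+x)^m/(1-x)^{m+1}=\sum_{k=0}^n\binom{m}{k}\binom{m+n-k}{n-k}$, and resubstituting $m=2ij+i+j$ is precisely the summand of~(b). This is the direct analogue of the passage from Theorem~\ref{t-pk}~(a) to~(b).

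Part~(c) requires a genuinely new, third evaluation of the same scalar product, now through power sums. Here I would invoke Lemma~\ref{l-psexp}~(b), which expands $H(x)/(1-tE(x)H(x))$ as $\sum_\lambda (p_\lambda/z_\lambda)\,B_{o(\lambda)}(t)/(1-t)^{o(\lambda)+1}\,x^{|\lambda|}$, and apply it to both tensor factors at $y=z=1$. Orthogonality of power sums, $\langle p_\lambda,p_\mu\rangle=z_\lambda$ when $\lambda=\mu$ and $0$ otherwise, collapses the double sum over $\lambda,\mu$ to $\sum_\lambda \frac{1}{z_\lambda}\frac{B_{o(\lambda)}(s)B_{o(\lambda)}(t)}{(1-s)^{o(\lambda)+1}(1-t)^{o(\lambda)+1}}x^{|\lambda|}$. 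Taking $[x^n]$, grouping the partitions of $n$ by their number of odd parts $o(\lambda)=k$, and using Lemma~\ref{l-cdef}~(c) to evaluate the inner sum $\sum_{\lambda\vdash n,\,o(\lambda)=k}1/z_\lambda$ as $e(n,k)/n!$ then assembles the sum over $k$ of terms $e(n,k)\,B_k(s)B_k(t)/((1-s)^{k+1}(1-t)^{k+1})$; equating this with the left-hand side read off from~(a) gives part~(c).

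The step I expect to be most delicate is this last one, and the obstacle is conceptual rather than computational. In the peak case, Lemma~\ref{l-psexp}~(a) carries a free parameter through the product $\prod_i(1-(-y)^{\lambda_i})$, so the power-sum route survives for general $y$; here the only available expansion, Lemma~\ref{l-psexp}~(b), is already specialized to $y=1$, so the power-sum route is open only at $y=z=1$. That is exactly why~(c) is stated for $(\lpk,\ilpk)$ rather than for the full four-variable polynomial, and why the governing statistic is the number of odd parts $o(\lambda)$ — bringing in the type~B Eulerian polynomials $B_k$ and the counts $e(n,k)$ of Lemma~\ref{l-cdef}~(c) — in place of the total part count $l(\lambda)$, the ordinary Eulerian polynomials, and $c(n,k)$ that governed descents. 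Keeping the normalization from $e(n,k)/n!$ straight is the one genuine bookkeeping subtlety.
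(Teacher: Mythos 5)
Your proposal matches the paper's proof essentially step for step: part (a) by specializing Theorem \ref{t-lpkdes} at $y=z=1$ (with the exponents combining exactly as you compute), part (b) by extracting $[x^n]$ via the expansion of $(1+x)^{m}/(1-x)^{m+1}$ with $m=2ij+i+j$, and part (c) by expanding both factors of $\left\langle H(x)/(1-sE(x)H(x)),\,H/(1-tEH)\right\rangle$ through Lemma \ref{l-psexp}~(b), collapsing by orthogonality of the $p_\lambda$, and grouping partitions by $o(\lambda)=k$ via Lemma \ref{l-cdef}~(c), exactly as the paper does. Your closing remark about the normalization is in fact sharper than you may realize: the power-sum route yields the factor $\tfrac{1}{n!}$ in front of $\sum_{k}e(n,k)B_k(s)B_k(t)/\bigl((1-s)^{k+1}(1-t)^{k+1}\bigr)$, consistent with Theorem \ref{t-lpkilpkprod} derived from it, so the printed statement of part (c) --- which omits $\tfrac{1}{n!}$ --- contains a typo that your bookkeeping correctly catches.
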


\begin{proof}
Evaluating Theorem \ref{t-lpkdes} at $y=z=1$ yields part (a), whereas
(b) follows from (a) and
\[
\frac{(1+x)^{2ij+i+j}}{(1-x)^{2ij+i+j+1}}=\sum_{n=0}^{\infty}\sum_{k=0}^{n}{2ij+i+j \choose k}{2ij+i+j+n-k \choose n-k}x^{n}.
\]
To prove (c), first observe that Lemma \ref{l-psexp} (b) implies
\begin{alignat*}{1}
\left\langle \frac{H(x)}{1-sE(x)H(x)},\frac{H}{1-tEH}\right\rangle  & =\sum_{\lambda,\mu}\frac{1}{z_{\lambda}z_{\mu}}\frac{B_{o(\lambda)}(s)B_{o(\mu)}(t)}{(1-s)^{o(\lambda)+1}(1-t)^{o(\mu)+1}}\left\langle p_{\lambda},p_{\mu}\right\rangle x^{\left|\lambda\right|}\\
 & =\sum_{\lambda}\frac{1}{z_{\lambda}}\frac{B_{o(\lambda)}(s)B_{o(\lambda)}(t)}{(1-s)^{o(\lambda)+1}(1-t)^{o(\lambda)+1}}x^{\left|\lambda\right|},
\end{alignat*}
but this same scalar product is also given by
\begin{align*}
 & \left\langle \frac{H(x)}{1-sE(x)H(x)},\frac{H}{1-tEH}\right\rangle \\
 & \qquad\qquad=\frac{1}{(1-s)(1-t)}\sum_{n=1}^{\infty}\frac{((1+s)(1+t))^{n}}{((1-s)(1-t))^{n+1}}P_{n}^{(\lpk,\ilpk)}\left(\frac{4s}{(1+s)^{2}},\frac{4t}{(1+t)^{2}}\right)x^{n},
\end{align*}
which is obtained from evaluating (\ref{e-lpkdessc1}) at $y=z=1$.
Equating these two expressions, extracting coefficients of $x^{n}$, and applying Lemma \ref{l-cdef} (c) completes the proof.
\end{proof}

See Table \ref{tb-lpk} for the first several polynomials $P_{n}^{(\lpk,\ilpk)}(s,t)$.

\renewcommand{\arraystretch}{1.3}
\begin{table}[H]
\centering
\noindent \begin{raggedright}
{\small{}}%
\begin{tabular}{|>{\centering}m{0.1in}|>{\raggedright}p{6in}|}
\hline 
\multicolumn{1}{|c|}{{\small{}$n$}} & {\small{}$P_{n}^{(\lpk,\ilpk)}(s,t)$}\tabularnewline
\hline 
{\small{}$1$} & {\small{}$1$}\tabularnewline
\hline 
{\small{}$2$} & {\small{}$1+st$}\tabularnewline
\hline 
{\small{}$3$} & {\small{}$1+5st$}\tabularnewline
\hline 
{\small{}$4$} & {\small{}$1+(15s+3s^{2})t+(3s+2s^{2})t^{2}$}\tabularnewline
\hline 
{\small{}$5$} & {\small{}$1+(35s+23s^{2})t+(23s+38s^{2})t^{2}$}\tabularnewline
\hline 
{\small{}$6$} & {\small{}$1+(70s+100s^{2}+9s^{3})t+(100s+335s^{2}+44s^{3})t^{2}+(9s+44s^{2}+8s^{3})t^{3}$}\tabularnewline
\hline 
{\small{}$7$} & {\small{}$1+(126s+324s^{2}+93s^{3})t+(324s+1951s^{2}+836s^{3})t^{2}+(93s+836s^{2}+456s^{3})t^{3}$}\tabularnewline
\hline 
\end{tabular}{\small\par}
\par\end{raggedright}
\caption{\label{tb-lpk}Joint distribution of $\protect\lpk$ and $\protect\ilpk$
over $\mathfrak{S}_{n}$}
\end{table}

Next, we express these two-sided left peak polynomials as a sum involving products 
of the ordinary left peak polynomials
\[
P_{n}^{\lpk}(t)\coloneqq\sum_{\pi\in\mathfrak{S}_{n}}t^{\lpk(\pi)}.
\]

\begin{thm}
\label{t-lpkilpkprod}
For all $n\geq1$, we have
\begin{align*}
P_{n}^{(\lpk,\ilpk)}(s,t) 
& =\frac{1}{n!}\sum_{k=0}^n e(n,k)\left((1-s)(1-t)\right)^{\frac{n-k}{2}}P_{k}^{\lpk}(s)P_{k}^{\lpk}(t).
\end{align*}
\end{thm}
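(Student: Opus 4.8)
The plan is to follow the proof of Theorem~\ref{t-pkprod} line for line, substituting the left-peak data for the peak data at each step. The only genuinely new ingredient is the left-peak analogue of Stembridge's identity \eqref{e-Apk}: for all $n\geq 1$,
\[
B_{n}(t)=(1+t)^{n}P_{n}^{\lpk}\!\left(\frac{4t}{(1+t)^{2}}\right),
\]
expressing the type B Eulerian polynomial through the left-peak polynomial. One verifies this directly against $B_n(t)/(1-t)^{n+1}=\sum_k(2k+1)^nt^k$; note that, unlike in \eqref{e-Apk}, the binomial power is $n$ rather than $n+1$ and there is no factor of $2^{-n}$, which is exactly what is needed to match the absence of a $4^{k}$ factor in Theorem~\ref{t-lpk}~(c).

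First I would substitute $B_k(s)=(1+s)^kP_k^{\lpk}(4s/(1+s)^2)$ and $B_k(t)=(1+t)^kP_k^{\lpk}(4t/(1+t)^2)$ into the right-hand side of Theorem~\ref{t-lpk}~(c) (whose sum over $k$ carries the factor $e(n,k)/n!$ coming from Lemma~\ref{l-cdef}~(c)), and relabel the variables $s,t$ as $u,v$. Then I set $s=4u/(1+u)^2$ and $t=4v/(1+v)^2$, the same quadratic substitution used for Theorem~\ref{t-pkprod}; the left-hand side of Theorem~\ref{t-lpk}~(c) becomes $\frac{((1+u)(1+v))^n}{((1-u)(1-v))^{n+1}}P_n^{(\lpk,\ilpk)}(s,t)$. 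Dividing through, all the powers of $1\pm u$ and $1\pm v$ collapse: the $k$th term acquires the single factor $\bigl((1-u)(1-v)/((1+u)(1+v))\bigr)^{n-k}$, exactly as in the peak case (the shift from $n+1$ to $n$ in the binomial power of the $B_n$\textendash$P_n^{\lpk}$ identity is precisely what makes this cancellation work), so I obtain the analogue of \eqref{e-pkprod},
\[
P_n^{(\lpk,\ilpk)}(s,t)=\frac{1}{n!}\sum_{k=0}^n e(n,k)\left(\frac{(1-u)(1-v)}{(1+u)(1+v)}\right)^{n-k}P_k^{\lpk}(s)P_k^{\lpk}(t).
\]

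Finally I would reuse the inversion from the proof of Theorem~\ref{t-pkprod} verbatim: since the relation $s=4u/(1+u)^2$ is identical, $u=2s^{-1}(1-\sqrt{1-s})-1$ (and similarly $v$) satisfy \eqref{e-pksimp}, namely $(1-u)(1-v)/((1+u)(1+v))=\sqrt{(1-s)(1-t)}$. Substituting this replaces the bracketed factor by $((1-s)(1-t))^{(n-k)/2}$ and gives the claimed identity; because $e(n,k)=0$ whenever $n-k$ is odd, the resulting exponents are all integers and no square roots survive. The only step that is not pure bookkeeping is establishing the $B_n$\textendash$P_n^{\lpk}$ identity above\textemdash once that is in hand, the argument is a mechanical transcription of the peak case, since the inverting substitution and the simplification \eqref{e-pksimp} are shared.
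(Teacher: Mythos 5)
Your proposal is correct and is exactly the paper's (omitted) argument: the paper proves Theorem~\ref{t-lpkilpkprod} by repeating the proof of Theorem~\ref{t-pkprod} with Petersen's identity \eqref{e-Blpk} in place of \eqref{e-Apk}, precisely as you do, and your exponent bookkeeping together with the substitution \eqref{e-pksimp} fills in the omitted details faithfully. You also correctly carry the factor $1/n!$ coming from Lemma~\ref{l-cdef}~(c), which is needed for the final formula but is inadvertently absent from the printed statement of Theorem~\ref{t-lpk}~(c).
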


\begin{proof}
The proof follows in the same way as the proof of Theorem \ref{t-pkprod}
except that we use the formula
\begin{equation}
B_{n}(t)=(1+t)^{n}P_{n}^{\lpk}\left(\frac{4t}{(1+t)^{2}}\right)\label{e-Blpk}
\end{equation}
\cite[Proposition 4.15]{Petersen2007} in place of (\ref{e-Apk}).
The details are omitted.
\end{proof}

From Theorem \ref{t-lpkilpkprod} and Proposition \ref{p-cdef} (c), we can obtain a generating function for the two-sided left peak polynomials, analogous to Theorem \ref{t-pkipkgf} for the two-sided peak polynomials.

\begin{thm}
\label{t-lpkilpkgf}
We have
\begin{equation*}
\sum_{n=0}^\infty P_{n}^{(\lpk,\ilpk)}(s,t) u^n = \frac{1}{\sqrt{1-(1-s)(1-t)u^2}}\sum_{k=0}^\infty \frac{1}{k!} \left(\frac{L(\sqrt{(1-s)(1-t)}u)}{\sqrt{(1-s)(1-t)}}\right)^{\! k} P_{k}^{\lpk}(s)P_{k}^{\lpk}(t).
\end{equation*}
\end{thm}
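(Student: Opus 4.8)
The plan is to follow the route flagged in the text: start from the product formula of Theorem~\ref{t-lpkilpkprod}, pass to the generating function in $u$, and evaluate the resulting inner sum via Proposition~\ref{p-cdef}~(c). Writing $c=(1-s)(1-t)$, Theorem~\ref{t-lpkilpkprod} says
\[ P_{n}^{(\lpk,\ilpk)}(s,t)=\frac{1}{n!}\sum_{k=0}^{n}e(n,k)\,c^{(n-k)/2}\,P_{k}^{\lpk}(s)P_{k}^{\lpk}(t), \]
and I would first note that this also holds at $n=0$ (since $e(0,0)=1$ and $P_0^{\lpk}=1$), so the sum over $n$ may begin at $0$. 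Multiplying by $u^{n}$, summing over $n$, and interchanging the order of summation would give
\[ \sum_{n=0}^{\infty}P_{n}^{(\lpk,\ilpk)}(s,t)\,u^{n}=\sum_{k=0}^{\infty}P_{k}^{\lpk}(s)P_{k}^{\lpk}(t)\,S_{k},\qquad S_{k}:=\sum_{n\ge k}e(n,k)\,c^{(n-k)/2}\,\frac{u^{n}}{n!}. \]

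The core step would be to put $S_k$ in closed form. Since $e(n,k)=0$ unless $n-k$ is even, each exponent $(n-k)/2$ is a nonnegative integer and no genuine square roots intrude; factoring out $c^{-k/2}$ and setting $w:=\sqrt{c}\,u$ turns $S_k$ into $c^{-k/2}\sum_{n}e(n,k)\,w^{n}/n!$. To finish I would extract the coefficient of $v^{k}$ from Proposition~\ref{p-cdef}~(c), namely $\sum_{n,k}e(n,k)v^{k}w^{n}/n!=e^{vL(w)}/\sqrt{1-w^{2}}$, which yields $\sum_{n}e(n,k)w^{n}/n!=L(w)^{k}/(k!\sqrt{1-w^{2}})$. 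Substituting back, with $w^{2}=cu^{2}$ and $L(w)=L(\sqrt{c}\,u)$, would give
\[ S_{k}=\frac{1}{\sqrt{1-cu^{2}}}\,\frac{1}{k!}\left(\frac{L(\sqrt{c}\,u)}{\sqrt{c}}\right)^{\! k}. \]

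Inserting this and pulling the $k$-independent factor $1/\sqrt{1-cu^{2}}$ out front would produce exactly the asserted identity with $c=(1-s)(1-t)$. I expect the genuine content to be minimal and the only care needed to be bookkeeping: confirming that the parity constraint on $e(n,k)$ legitimizes combining $c^{-k/2}$ with $\sqrt{c}$ unambiguously, checking the $n=0$ term so that summation may start at $0$, and justifying the interchange of sums and the coefficient extraction as identities of formal power series in $u$ (treating $s,t$ as parameters). The whole argument is the direct analogue of the passage from Theorem~\ref{t-pkprod} to Theorem~\ref{t-pkipkgf}; the one new feature is the factor $1/\sqrt{1-w^{2}}$ in Proposition~\ref{p-cdef}~(c)---accounting for the even cycles---which is precisely what supplies the extra $1/\sqrt{1-(1-s)(1-t)u^{2}}$ in front.
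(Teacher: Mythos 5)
Your proposal is correct and is precisely the paper's argument: the paper obtains Theorem~\ref{t-lpkilpkgf} by multiplying the product formula of Theorem~\ref{t-lpkilpkprod} by $u^n$, summing over $n$, and invoking Proposition~\ref{p-cdef}~(c), exactly as you do, with the factor $1/\sqrt{1-(1-s)(1-t)u^2}$ arising from the even-cycle contribution just as you note. Your extra bookkeeping (the $n=0$ term, the parity of $n-k$ removing any ambiguity in $c^{(n-k)/2}$, and the formal-power-series justification of the interchange) is sound and merely makes explicit what the paper leaves implicit.
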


Just as we derived from Theorem \ref{t-pkipkgf} the odd part of Stanley's formula \eqref{e-altalt} for doubly alternating permutations, we can derive the even part of \eqref{e-altalt} from Theorem \ref{t-lpkilpkgf}. First, we note that a permutation $\pi$ in $\mathfrak{S}_n$ has at most $n/2$ left peaks, and has exactly $n/2$ left peaks if and only if $n$ is even and $\pi$ is alternating. Then we have
\[
\lim_{s\rightarrow0}P_{k}^{\lpk}(s^{-2})s^{k}=\lim_{s\rightarrow0}\sum_{\pi\in\mathfrak{S}_{k}}s^{k-2\lpk(\pi)}=\begin{cases}
0, & \text{if }n\text{ is odd,}\\
E_{k}, & \text{if }n\text{ is even},
\end{cases}
\]
and similarly
\[
\lim_{s\rightarrow0}P_{n}^{(\lpk,\ilpk)}(s^{-2},t^{-2})s^{n}t^{n}=\begin{cases}
0, & \text{if }n\text{ is odd,}\\
\tilde{E}_{n}, & \text{if }n\text{ is even.}
\end{cases}
\]
Therefore, to obtain the even part of Stanley's formula, we take Theorem \ref{t-lpkilpkgf} and replace $s$ with $s^{-2}$, $t$ with $t^{-2}$, and $u$ with $stu$; then we take the limit as $s,t\to 0$, similarly to the computation for the odd part.

\subsection{Left peaks and inverse peaks}

We proceed to give analogous formulas for the polynomials
\begin{align*}
P_{n}^{(\lpk,\ipk)}(s,t) & \coloneqq P_{n}^{(\lpk,\ipk)}(s,t,1,1)=\sum_{\pi\in\mathfrak{S}_{n}}s^{\lpk(\pi)}t^{\ipk(\pi)+1}
\end{align*}
encoding the joint distribution of $\lpk$ and $\ipk$ over $\mathfrak{S}_n$.

\begin{thm}
\label{t-pkilpk}We have \leqnomode 
\begin{multline*}
\tag{{a}}\frac{1}{(1-s)(1-t)}+\frac{1}{2}\sum_{n=1}^{\infty}\frac{(1+s)^{n}(1+t)^{n+1}}{((1-s)(1-t))^{n+1}}P_{n}^{(\lpk,\ipk)}\left(\frac{4s}{(1+s)^{2}},\frac{4t}{(1+t)^{2}}\right)x^{n}\\
=\sum_{i,j=0}^{\infty}\left(\frac{1+x}{1-x}\right)^{2ij+j}s^{i}t^{j}
\end{multline*}
and, for all $n\geq1$, we have
\begin{multline*}
\tag{{b}}\qquad\frac{(1+s)^{n}(1+t)^{n+1}}{((1-s)(1-t))^{n+1}}P_{n}^{(\lpk,\ipk)}\left(\frac{4s}{(1+s)^{2}},\frac{4t}{(1+t)^{2}}\right)\\
=2\sum_{i,j=0}^{\infty}\sum_{k=0}^{n}{2ij+j \choose k}{2ij+j+n-k-1 \choose n-k}s^{i}t^{j}\qquad
\end{multline*}
and
\begin{multline*}
\tag{{c}}\qquad\frac{(1+s)^{n}(1+t)^{n+1}}{((1-s)(1-t))^{n+1}}P_{n}^{(\lpk,\ipk)}\left(\frac{4s}{(1+s)^{2}},\frac{4t}{(1+t)^{2}}\right)\\
=\frac{1}{n!}\sum_{k=0}^n
2^{k+1} d(n,k)
\frac{B_{k}(s)A_{k}(t)}{(1-s)^{k+1}(1-t)^{k+1}}.\qquad
\end{multline*}
\end{thm}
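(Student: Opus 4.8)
The plan is to mirror the proof of Theorem~\ref{t-pk}, obtaining parts (a) and (b) by specializing the four-variable result of Theorem~\ref{t-pkdesilpkides} at $y=z=1$, and to obtain part (c) from a power-sum computation of the relevant scalar product together with Lemma~\ref{l-cdef}. First I would set $y=z=1$ in Theorem~\ref{t-pkdesilpkides}. Under this specialization the two peak arguments $\frac{(1+y)^2 s}{(y+s)(1+ys)}$ and $\frac{(1+z)^2 t}{(z+t)(1+zt)}$ collapse to $\frac{4s}{(1+s)^2}$ and $\frac{4t}{(1+t)^2}$; the two descent arguments $\frac{y+s}{1+ys}$ and $\frac{z+t}{1+zt}$ both become $1$, so the $\des$ and $\ides$ exponents drop out, leaving $P_n^{(\lpk,\ipk)}$; and the prefactors become $(1+s)^n$, $(1+t)^{n+1}$, and $(1+z)=2$. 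On the right-hand side the exponents combine as $ij+(i+1)j=2ij+j$, yielding part (a).

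For part (b) I would expand $\left(\frac{1+x}{1-x}\right)^{2ij+j}$ as a power series in $x$ using the two binomial series already recorded in the proof of Theorem~\ref{t-pk}, namely $\left(\frac{1+x}{1-x}\right)^m=\sum_{n}\sum_{k=0}^{n}\binom{m}{k}\binom{m+n-k-1}{n-k}x^n$ with $m=2ij+j$. Substituting this into part (a), extracting the coefficient of $x^n$, and clearing the factor $\tfrac12$ (that is, multiplying through by $2$) produces the stated formula.

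Part (c) is where the real work lies. The point is that the identity of Theorem~\ref{t-pkdesilpkides} arises, via Foulkes's theorem, from evaluating the scalar product $\left\langle \frac{H(x)}{1-sE(x)H(x)},\,\frac{1}{1-tEH}\right\rangle$ at $y=z=1$, so its left-hand side equals the left-hand side of part (a). I would compute this same scalar product a second way, through power sums. For the first factor I use Lemma~\ref{l-psexp}(b) (with $t$ replaced by $s$), giving $\frac{H(x)}{1-sE(x)H(x)}=\sum_\lambda \frac{p_\lambda}{z_\lambda}\frac{B_{o(\lambda)}(s)}{(1-s)^{o(\lambda)+1}}x^{|\lambda|}$; for the second factor I use Lemma~\ref{l-psexp}(a) at the specialization $y=1$ and $x=1$, which introduces the product $\prod_{k}(1-(-1)^{\mu_k})$. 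This product vanishes unless every part of $\mu$ is odd, in which case it equals $2^{l(\mu)}$, so only odd partitions survive and the second factor becomes $\sum_{\mu\ \mathrm{odd}}\frac{p_\mu}{z_\mu}\frac{A_{l(\mu)}(t)}{(1-t)^{l(\mu)+1}}2^{l(\mu)}$. Applying $\langle p_\lambda,p_\mu\rangle=z_\lambda\delta_{\lambda\mu}$ forces $\lambda=\mu$ odd; since every part of an odd partition is odd we have $o(\lambda)=l(\lambda)$, so the $B$- and $A$-indices merge into a single $k=l(\lambda)$. Collecting the terms with $l(\lambda)=k$ and invoking Lemma~\ref{l-cdef}(b) to evaluate $\sum_{\lambda\vdash n,\ \mathrm{odd},\ l(\lambda)=k}1/z_\lambda=d(n,k)/n!$ gives the coefficient of $x^n$ in the scalar product as $\frac{1}{n!}\sum_{k}2^{k}\,d(n,k)\,\frac{B_k(s)A_k(t)}{(1-s)^{k+1}(1-t)^{k+1}}$. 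Equating this with the coefficient of $x^n$ on the left side of part (a) and multiplying through by the overall factor $2$ coming from $(1+z)|_{z=1}=2$ upgrades $2^k$ to $2^{k+1}$, which is exactly the claimed formula.

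The main obstacle is bookkeeping rather than any deep difficulty: one must correctly pair the \emph{mismatched} power-sum expansions---Lemma~\ref{l-psexp}(b) (type~B Eulerian, for the $\lpk$ factor) against Lemma~\ref{l-psexp}(a) (ordinary Eulerian, for the $\ipk$ factor)---and keep track of two separate sources of powers of two: the $2^{l(\mu)}$ coming from the plethystic product $\prod_k(1-(-1)^{\mu_k})$ supported on odd partitions, and the extra factor of $2$ that the $(1+z)$ denominator contributes once it is moved to the other side. Getting the single factor of $2$ in $2^{k+1}$ right (rather than $2^k$) hinges precisely on this last step.
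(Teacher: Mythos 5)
Your proposal is correct and follows essentially the same route as the paper: parts (a) and (b) by specializing Theorem \ref{t-pkdesilpkides} at $y=z=1$ and expanding $\left(\frac{1+x}{1-x}\right)^{2ij+j}$, and part (c) by pairing the power-sum expansions of Lemma \ref{l-psexp} (a)--(b) via $\left\langle p_{\lambda},p_{\mu}\right\rangle =z_{\lambda}\delta_{\lambda\mu}$ and then applying Lemma \ref{l-cdef} (b), with the factor $2^{k+1}$ arising exactly as you say. The only cosmetic difference is that you expand the scalar product $\left\langle \frac{H(x)}{1-sE(x)H(x)},\frac{1}{1-tEH}\right\rangle$ in the orientation matching the theorem statement, so that $B_{k}(s)A_{k}(t)$ and the prefactor $(1+s)^{n}(1+t)^{n+1}$ come out directly, whereas the paper computes the mirror image $\left\langle \frac{1}{1-sE(x)H(x)},\frac{H}{1-tEH}\right\rangle$ and reads off the theorem after interchanging the roles of $s$ and $t$.
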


\begin{proof}
The proof of parts (a) and (b) is the same as that of Theorem \ref{t-lpk}
(a) and (b), except that we specialize Theorem \ref{t-pkdesilpkides}
as opposed to Theorem \ref{t-lpkdes}. To prove (c), notice that from
Lemma \ref{l-psexp} (a)\textendash (b) we have 
\begin{align}
\left\langle \frac{1}{1-sE(x)H(x)},\frac{H}{1-tEH}\right\rangle  & =\sum_{\lambda\text{ odd}}\sum_{\mu}\frac{2^{l(\lambda)}}{z_{\lambda}z_{\mu}}\frac{A_{l(\lambda)}(s)B_{o(\mu)}(t)}{(1-s)^{l(\lambda)+1}(1-t)^{o(\mu)+1}}\left\langle p_{\lambda},p_{\mu}\right\rangle x^{\left|\mu\right|}\nonumber \\
 & =\sum_{\lambda\text{ odd}}\frac{2^{l(\lambda)}}{z_{\lambda}}\frac{A_{l(\lambda)}(s)B_{l(\lambda)}(t)}{(1-s)^{l(\lambda)+1}(1-t)^{l(\lambda)+1}}x^{\left|\lambda\right|},\label{e-pkilpksc1}
\end{align}
and the same scalar product can be shown to be equal to
\begin{multline}
\left\langle \frac{1}{1-sE(x)H(x)},\frac{H}{1-tEH}\right\rangle =\frac{1}{(1-s)(1-t)}\\
+\frac{1}{2}\sum_{n=1}^{\infty}\frac{(1+s)^{n+1}(1+t)^{n}}{((1-s)(1-t))^{n+1}}P_{n}^{(\lpk,\ipk)}\left(\frac{4s}{(1+s)^{2}},\frac{4t}{(1+t)^{2}}\right)x^{n}\label{e-pkilpksc2}
\end{multline}
by using Lemma \ref{l-ribexp} (b)\textendash (c).
Equating (\ref{e-pkilpksc1}) and (\ref{e-pkilpksc2}), extracting coefficients of $x^{n}$, 
and then applying Lemma \ref{l-cdef} (b) completes the proof of (c).
\end{proof}

\renewcommand{\arraystretch}{1.3}
\begin{table}[H]
\centering
\noindent \begin{raggedright}
{\small{}}%
\begin{tabular}{|>{\centering}m{0.1in}|>{\raggedright}p{6in}|}
\hline 
\multicolumn{1}{|c|}{{\small{}$n$}} & {\small{}$P_{n}^{(\lpk,\ipk)}(s,t)$}\tabularnewline
\hline 
{\small{}$1$} & {\small{}$t$}\tabularnewline
\hline 
{\small{}$2$} & {\small{}$(1+s)t$}\tabularnewline
\hline 
{\small{}$3$} & {\small{}$(1+3s)t+2st^{2}$}\tabularnewline
\hline 
{\small{}$4$} & {\small{}$(1+6s+s^{2})t+(12s+4s^{2})t^{2}$}\tabularnewline
\hline 
{\small{}$5$} & {\small{}$(1+10s+5s^{2})t+(42s+46s^{2})t^{2}+(6s+10s^{2})t^{3}$}\tabularnewline
\hline 
{\small{}$6$} & {\small{}$(1+15s+15s^{2}+s^{3})t+(112s+272s^{2}+32s^{3})t^{2}+(52s+192s^{2}+28s^{3})t^{3}$}\tabularnewline
\hline 
{\small{}$7$} & {\small{}$(1+21s+35s^{2}+7s^{3})t+(252s+1136s^{2}+436s^{3})t^{2}+(252s+1776s^{2}+852s^{3})t^{3}+(18s+164s^{2}+90s^{3})t^{4}$}\tabularnewline
\hline 
\end{tabular}{\small\par}
\par\end{raggedright}
\caption{\label{tb-lpkipk}Joint distribution of $\protect\lpk$ and $\protect\ipk$
over $\mathfrak{S}_{n}$}
\end{table}

Table \ref{tb-lpkipk} displays the first several polynomials $P_{n}^{(\lpk,\ipk)}(s,t)$.
The following proposition, which is Corollary 11 of \cite{Troyka2022},
characterizes the coefficients of $s^{k}t$ in $P_{n}^{(\lpk,\ipk)}(s,t)$.
\begin{prop}
\label{p-lpkipk0}For any $n\geq1$ and $k\geq0$, the number of permutations
$\pi\in\mathfrak{S}_{n}$ with $\lpk(\pi)=k$ and $\ipk(\pi)=0$ is
equal to ${n \choose 2k}$.
\end{prop}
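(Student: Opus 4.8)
The plan is to convert the enumeration of permutations into an enumeration of compositions by invoking the structural fact quoted after Proposition~\ref{p-pkipk0}: for every composition $L\vDash n$ with $n\geq 1$, there is exactly one permutation $\pi\in\mathfrak{S}_n$ with $\Comp(\pi)=L$ and $\ipk(\pi)=0$. Thus $\pi\mapsto\Comp(\pi)$ restricts to a bijection from $\{\pi\in\mathfrak{S}_n:\ipk(\pi)=0\}$ onto the set of all compositions of $n$. Because $\lpk$ is a descent statistic, $\lpk(\pi)$ depends only on $\Comp(\pi)$, so under this bijection the number of $\pi\in\mathfrak{S}_n$ with $\lpk(\pi)=k$ and $\ipk(\pi)=0$ equals the number of compositions $L\vDash n$ with $\lpk(L)=k$.

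Next I would reinterpret $\lpk(L)$ in terms of the descent set $\Des(L)\subseteq[n-1]$. A position $i$ is a left peak of $\pi$ exactly when $i\in\Des(\pi)$ but $i-1\notin\Des(\pi)$, under the convention that $0\notin\Des(\pi)$: the case $i=1$ recovers $\pi(1)>\pi(2)$, while $2\leq i\leq n-1$ recovers $\pi(i-1)<\pi(i)>\pi(i+1)$. Hence $\lpk(\pi)$ counts the maximal blocks of consecutive integers in $\Des(\pi)$ (which also re-confirms that $\lpk$ is a descent statistic). The problem is therefore reduced to the set-theoretic count of subsets $S\subseteq[n-1]$ that split into exactly $k$ maximal runs of consecutive integers.

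Finally I would carry out this count. Such a subset is determined by its $2k$ boundaries---the $k$ transitions from ``outside $S$'' to ``inside $S$'' together with the $k$ transitions back---and these may be placed in any $2k$ of the $n$ gaps surrounding the elements $1,\dots,n-1$ (one gap before $1$, one after $n-1$, and one between each pair of adjacent elements). Reading any chosen set of $2k$ gaps from left to right automatically alternates run-starts and run-ends and leaves each run nonempty and properly separated, so $2k$-subsets of gaps correspond bijectively to the desired subsets $S$. This yields $\binom{n}{2k}$ of them, as claimed.

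The real content lies in the structural input about $\ipk=0$; granting it, the identification of $\lpk$ with runs of the descent set and the binomial count are routine. If one instead insisted on a proof internal to the present framework, the main obstacle would be extracting the coefficient of $t^1$ (the $\ipk=0$ term) from Theorem~\ref{t-pkilpk}, which requires pushing that coefficient through the quadratic substitution $t\mapsto 4t/(1+t)^2$; I would handle this by isolating the lowest power of $t$ and letting $t\to 0$.
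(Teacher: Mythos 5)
Your proof is correct and follows the same route the paper relies on: the paper offers no proof of Proposition \ref{p-lpkipk0}, citing it as Corollary 11 of \cite{Troyka2022}, and that corollary rests on exactly the reduction you make---the Troyka--Zhuang uniqueness result (one permutation with $\ipk=0$ per descent composition) together with the observation that $\lpk$ counts the maximal runs of consecutive integers in the descent set, so that the claim becomes the standard count of subsets of $[n-1]$ with $k$ maximal runs. Your gap-transition bijection giving $\binom{n}{2k}$ is sound, and the whole argument parallels the paper's own proof of the analogous Proposition \ref{p-udripk0}, which likewise invokes the uniqueness fact and then counts compositions.
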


The next formula expresses the polynomial $P_{n}^{(\pk,\ilpk)}(s,t)$
as a sum involving products of the peak and left peak polynomials.
The proof follows in the same way as that of Theorem~\ref{t-pkprod},
except that we begin by substituting both (\ref{e-Apk}) and (\ref{e-Blpk})
into Theorem \ref{t-pkilpk} (c).
\begin{thm}
For all $n\geq1$, we have
\begin{align*}
P_{n}^{(\lpk,\ipk)}(s,t) 
& =\frac{1}{n!}\sum_{k=0}^n
d(n,k)
\left((1-s)(1-t)\right)^{\frac{n-k}{2}}P_{k}^{\lpk}(s)P_{k}^{\pk}(t).
\end{align*}
\end{thm}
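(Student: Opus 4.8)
The plan is to follow the template of the proof of Theorem~\ref{t-pkprod}, but starting from Theorem~\ref{t-pkilpk}~(c) and feeding in both peak/Eulerian identities \eqref{e-Apk} and \eqref{e-Blpk}. First I would rewrite the right-hand side of Theorem~\ref{t-pkilpk}~(c) by substituting $A_k(t) = \bigl(\tfrac{1+t}{2}\bigr)^{k+1} P_k^{\pk}\bigl(\tfrac{4t}{(1+t)^2}\bigr)$ from \eqref{e-Apk} and $B_k(s) = (1+s)^k P_k^{\lpk}\bigl(\tfrac{4s}{(1+s)^2}\bigr)$ from \eqref{e-Blpk}. Exactly as in Theorem~\ref{t-pkprod}, the factor $2^{k+1}$ carried by Theorem~\ref{t-pkilpk}~(c) cancels against the single $2^{-(k+1)}$ coming from \eqref{e-Apk} (only one power of two appears here, since $B_k(s)$ contributes none), so the summand collapses to
\[
d(n,k)\,\frac{(1+s)^k (1+t)^{k+1}}{(1-s)^{k+1}(1-t)^{k+1}}\, P_k^{\lpk}\!\left(\tfrac{4s}{(1+s)^2}\right) P_k^{\pk}\!\left(\tfrac{4t}{(1+t)^2}\right).
\]

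Next I would divide both sides by the left-hand prefactor $(1+s)^n(1+t)^{n+1}/((1-s)(1-t))^{n+1}$ of Theorem~\ref{t-pkilpk}~(c). This is the one place where the computation genuinely departs from the $(\pk,\ipk)$ case: because $\lpk$ is paired with a type~B polynomial $B_k(s)$ while $\pk$ is paired with the ordinary $A_k(t)$, the summand carries the \emph{asymmetric} factor $(1+s)^k(1+t)^{k+1}$ rather than a symmetric one. However, the left-hand prefactor is asymmetric in precisely the same fashion, $(1+s)^n(1+t)^{n+1}$, so after division the exponents of both $(1+s)$ and $(1+t)$ reduce to $k-n$ and those of $(1-s)$ and $(1-t)$ to $n-k$. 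The net ratio therefore collapses to the symmetric expression $\bigl((1-s)(1-t)/((1+s)(1+t))\bigr)^{n-k}$, yielding
\[
P_n^{(\lpk,\ipk)}\!\left(\tfrac{4s}{(1+s)^2}, \tfrac{4t}{(1+t)^2}\right) = \frac{1}{n!}\sum_{k=0}^n d(n,k)\left(\frac{(1-s)(1-t)}{(1+s)(1+t)}\right)^{n-k} P_k^{\lpk}\!\left(\tfrac{4s}{(1+s)^2}\right) P_k^{\pk}\!\left(\tfrac{4t}{(1+t)^2}\right),
\]
which is the exact analogue of \eqref{e-pkprod}.

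Finally I would run the same change of variables as in the proof of Theorem~\ref{t-pkprod}: rename $s,t$ to $u,v$, set $s = 4u/(1+u)^2$ and $t = 4v/(1+v)^2$ (equivalently $u=(s/4)C(s/4)^2$ and $v=(t/4)C(t/4)^2$), and invoke the key identity \eqref{e-pksimp}, namely $(1-u)(1-v)/((1+u)(1+v)) = \sqrt{(1-s)(1-t)}$. This converts the symmetric ratio into $\bigl((1-s)(1-t)\bigr)^{(n-k)/2}$ and restores the arguments of $P_k^{\lpk}$ and $P_k^{\pk}$ to $s$ and $t$, producing the stated formula. Since $d(n,k)=0$ whenever $n-k$ is odd, the half-integer exponent never actually yields a square root.

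The main obstacle — really the only subtle point — is the exponent bookkeeping in the second paragraph: one must verify that the asymmetry introduced by pairing a type~B Eulerian polynomial ($B_k(s)$, for $\lpk$) with an ordinary Eulerian polynomial ($A_k(t)$, for $\pk$) is matched exactly by the asymmetric left-hand prefactor, so that the surviving ratio is symmetric and the universal substitution \eqref{e-pksimp} applies verbatim. Everything else is routine and mirrors the proof of Theorem~\ref{t-pkprod}.
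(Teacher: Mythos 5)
Your proposal is correct and is essentially the paper's own proof: the paper proves this theorem by substituting both \eqref{e-Apk} and \eqref{e-Blpk} into Theorem \ref{t-pkilpk}~(c) and then following the argument of Theorem \ref{t-pkprod}, including the change of variables $s=4u/(1+u)^2$, $t=4v/(1+v)^2$ and the identity \eqref{e-pksimp}. Your exponent bookkeeping (the $2^{k+1}$ cancellation and the matching of the asymmetric prefactor $(1+s)^n(1+t)^{n+1}$ against $(1+s)^k(1+t)^{k+1}$) checks out exactly.
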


\subsection{Left peaks and inverse descents}

Finally, define
\begin{align*}
P_{n}^{(\lpk,\ides)}(s,t) & \coloneqq P_{n}^{(\lpk,\ipk,\des,\ides)}(s,1,t,1)=\sum_{\pi\in\mathfrak{S}_{n}}s^{\lpk(\pi)}t^{\ides(\pi)+1}.
\end{align*}
We omit the proofs of the following theorems as they are very similar
to the proofs of analogous results presented earlier.

\begin{thm}
We have \leqnomode 
\begin{multline*}
\tag{{a}}\qquad\frac{1}{(1-s)(1-t)}+\sum_{n=1}^{\infty}\frac{(1+s)^{n}}{((1-s)(1-t))^{n+1}}P_{n}^{(\lpk,\ides)}\left(\frac{4s}{(1+s)^{2}},t\right)x^{n}\\
=\sum_{i,j=0}^{\infty}\frac{(1+x)^{i(j+1)}}{(1-x)^{(i+1)(j+1)}}s^{i}t^{j},\qquad
\end{multline*}
and, for all $n\geq1$, we have
\begin{multline*}
\tag{{b}}\qquad\frac{(1+s)^{n}}{((1-s)(1-t))^{n+1}}P_{n}^{(\lpk,\ides)}\left(\frac{4s}{(1+s)^{2}},t\right)\\
=\sum_{i,j=0}^{\infty}\sum_{k=0}^{n}{i(j+1) \choose k}{ij+i+j+n-k \choose n-k}s^{i}t^{j}\qquad
\end{multline*}
and
\begin{align*}
\tag{{c}}\quad\,\frac{(1+s)^{n}}{((1-s)(1-t))^{n+1}}P_{n}^{(\lpk,\ides)}\left(\frac{4s}{(1+s)^{2}},t\right) 
  & =\frac{1}{n!}\sum_{k,m=0}^n f(n,k,m)\frac{B_{k}(s)A_{m}(t)}{(1-s)^{k+1}(1-t)^{m+1}}
\end{align*}
with $f(n,k,m)$ as defined in Section \ref{s-zsums}.
\end{thm}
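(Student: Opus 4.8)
The plan is to treat this theorem exactly as Theorems \ref{t-lpk} and \ref{t-pkides} were treated: all three parts will fall out of specializing the four-variable identity of Theorem \ref{t-lpkdes}, which records the evaluation of the scalar product
\[
\left\langle \frac{H(x)}{1-sE(yx)H(x)},\ \frac{H}{1-tE(z)H}\right\rangle
\]
in two ways (via Foulkes's theorem on one side and via Lemmas \ref{l-pkdeshom}--\ref{l-scalprodHE} on the other). The key is the specialization $y=1$, $z=0$: putting $y=1$ sends the left-peak variable through the quadratic transformation $s\mapsto 4s/(1+s)^2$ and trivializes the direct-descent factor since $(y+s)/(1+ys)=1$, while putting $z=0$ trivializes the inverse-peak factor since $(1+z)^2t/((z+t)(1+zt))=1$ and retains the inverse-descent variable through $(z+t)/(1+zt)=t$. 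Under this specialization $P_n^{(\lpk,\ilpk,\des,\ides)}$ collapses to $P_n^{(\lpk,\ides)}$, the prefactor $(1+ys)^n(1+zt)^n$ becomes $(1+s)^n$, and on the right-hand side the factors $(1+zx)$ and $(1-yzx)$ both become $1$.

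For part (a) I would substitute $y=1$, $z=0$ into both sides of Theorem \ref{t-lpkdes} and read off the result, the right-hand side collapsing to $\sum_{i,j}(1+x)^{i(j+1)}(1-x)^{-(i+1)(j+1)}s^it^j$. Part (b) then follows by the binomial expansions $(1+x)^N=\sum_k\binom{N}{k}x^k$ and $(1-x)^{-M}=\sum_m\binom{M+m-1}{m}x^m$ applied with $N=i(j+1)$ and $M=(i+1)(j+1)$, followed by extraction of the coefficient of $x^n$; this is routine and parallels the derivation of Theorem \ref{t-lpk}(b).

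Part (c) is where the substance lies, and I would obtain it by expanding the \emph{same} scalar product, again at $y=1$, $z=0$, in the power-sum basis. The direct factor $H(x)/(1-sE(x)H(x))$ expands by Lemma \ref{l-psexp}(b) as $\sum_\lambda(p_\lambda/z_\lambda)B_{o(\lambda)}(s)(1-s)^{-o(\lambda)-1}x^{|\lambda|}$, contributing a type B Eulerian polynomial indexed by the number of odd parts $o(\lambda)$. The inverse factor must contribute an ordinary Eulerian polynomial indexed by the \emph{total} number of parts $l(\lambda)$, which comes from Lemma \ref{l-psexp}(a) specialized at inverse-peak variable $0$, where the product $\prod_k(1-(-z)^{\lambda_k})$ becomes $1$. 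Then $\langle p_\lambda,p_\mu\rangle=z_\lambda\delta_{\lambda\mu}$ collapses the double sum to a single sum over $\lambda\vdash n$, and collecting the terms with $o(\lambda)=k$ and $l(\lambda)=m$ through Lemma \ref{l-cdef}(d)---which evaluates the relevant $\sum 1/z_\lambda$ as $f(n,k,m)/n!$---produces the claimed formula.

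The hard part will be the power-sum expansion of the inverse factor in part (c). Unlike the left-peak factor, to which Lemma \ref{l-psexp}(b) applies verbatim, the inverse factor $H/(1-tH)$ carries an extra $H$ in the numerator relative to the pure-descent series $1/(1-tH)$ whose expansion Lemma \ref{l-psexp}(a) supplies; this extra $H$ shifts the power-sum coefficients by one power of $t$, replacing $A_{l(\lambda)}(t)$ by $A_{l(\lambda)}(t)/t$ for $\lambda\neq\varnothing$. Tracking this shift carefully, and checking that it is exactly absorbed by the normalization built into $P_n^{(\lpk,\ides)}$ and into the Eulerian polynomials $A_m(t)$, is the delicate bookkeeping that forces the genuinely two-parameter cycle count $f(n,k,m)$---simultaneously recording odd parts (for the factor $B_k(s)$) and all parts (for the factor $A_m(t)$)---rather than the single-parameter counts $d(n,k)$ or $e(n,k)$ that appeared in the pure-peak and pure-left-peak cases.
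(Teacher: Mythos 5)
Your overall plan is exactly the one the paper intends: the paper omits this proof as ``very similar to the proofs of analogous results presented earlier,'' and those earlier proofs (Theorems \ref{t-lpk} and \ref{t-pkides}) do precisely what you describe---parts (a) and (b) by specializing Theorem \ref{t-lpkdes} at $y=1$, $z=0$ and expanding binomially, and part (c) by expanding the same scalar product in the power-sum basis via Lemma \ref{l-psexp}, collapsing the double sum with $\left\langle p_{\lambda},p_{\mu}\right\rangle =z_{\lambda}\delta_{\lambda\mu}$, and invoking Lemma \ref{l-cdef} (d). Your explanation of why the two-parameter count $f(n,k,m)$ appears (odd parts feed $B_{k}(s)$ through Lemma \ref{l-psexp} (b); total parts feed $A_{m}(t)$ through Lemma \ref{l-psexp} (a) at $y=0$, where $\prod_{k}(1-(-y)^{\lambda_{k}})=1$) is correct, and so is your computation that $H/(1-tH)=t^{-1}\bigl((1-tH)^{-1}-1\bigr)$ shifts $A_{l(\lambda)}(t)$ to $A_{l(\lambda)}(t)/t$ for $\lambda\neq\varnothing$.

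There is, however, one concrete bookkeeping slip: your treatments of (a) and (c) silently use two \emph{incompatible} normalizations of $P_{n}^{(\lpk,\ides)}$. Under the section's definition $P_{n}^{(\lpk,\ides)}(s,t)=\sum_{\pi}s^{\lpk(\pi)}t^{\ides(\pi)+1}$ (this is what Table \ref{tb-desilpk} and part (c) use), the $y=1$, $z=0$ specialization of $P_{n}^{(\lpk,\ilpk,\des,\ides)}$---whose $t$-exponent is $\ides(\pi)$ with no $+1$---equals $t^{-1}P_{n}^{(\lpk,\ides)}\bigl(4s/(1+s)^{2},t\bigr)$, \emph{not} $P_{n}^{(\lpk,\ides)}$ as you assert, so your derivation of (a) drops a factor of $t$. (Check $n=1$: the left side of (a) contributes $(1+s)t/\bigl((1-s)^{2}(1-t)^{2}\bigr)$, while the coefficient of $x$ on the right is $\sum_{i,j}(2i+1)(j+1)s^{i}t^{j}=(1+s)/\bigl((1-s)^{2}(1-t)^{2}\bigr)$.) In your (c), by contrast, the ``absorption'' you invoke works exactly because the Foulkes side produces $\sum_{\pi}s'^{\lpk(\pi)}t^{\ides(\pi)}=t^{-1}P_{n}^{(\lpk,\ides)}$, whose $t^{-1}$ cancels the $t^{-1}$ from the power-sum shift---i.e., (c) requires the $+1$ normalization that your (a) ignores. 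To be fair, this mismatch diagnoses a typo in the printed theorem itself: as stated, (a)--(b) are consistent only with the $t^{\ides}$ normalization and (c) only with $t^{\ides+1}$ (the definition line $P_{n}^{(\lpk,\ipk,\des,\ides)}(s,1,t,1)$ is also misprinted for $(s,1,1,t)$). A clean proof fixes one convention: with the definition as printed, take the inverse-side series to be $1/(1-tH)$, i.e., Lemma \ref{l-ribexp} (b) at $y=0$, which carries $t^{\des(M)+1}$; then the homomorphism computation gives $\left\langle E(x)^{i}H(x)^{i+1},H^{j}\right\rangle =(1+x)^{ij}/(1-x)^{(i+1)j}$, so (a) holds with exponents $ij$ and $(i+1)j$ in place of $i(j+1)$ and $(i+1)(j+1)$, (b) with $\binom{ij}{k}\binom{ij+j+n-k-1}{n-k}$, and (c) exactly as printed---with Lemma \ref{l-psexp} (a) applying verbatim to $1/(1-tH)$ and no shift left to absorb.
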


\begin{thm}
For all $n\geq1$, we have
\begin{align*}
P_{n}^{(\lpk,\ides)}(s,t) 
& =\frac{1}{n!}\sum_{k,m=0}^n f(n,k,m) (1-s)^{\frac{n-k}{2}}(1-t)^{n-m}P_{k}^{\lpk}(s)A_{m}(t).
\end{align*}
\end{thm}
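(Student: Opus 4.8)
The plan is to follow the template established in the proofs of Theorem~\ref{t-pkprod} and Theorem~\ref{t-pkidesprod}: start from part~(c) of the preceding theorem and convert the type~B Eulerian polynomials $B_k(s)$ into left peak polynomials by way of a quadratic substitution, while leaving the $A_m(t)$ factors untouched. Concretely, I would first substitute the identity~\eqref{e-Blpk}, namely $B_k(s)=(1+s)^k P_k^{\lpk}\!\left(4s/(1+s)^2\right)$, into each $B_k(s)$ appearing in part~(c). This turns that formula into
\[
\frac{(1+s)^{n}}{((1-s)(1-t))^{n+1}}P_{n}^{(\lpk,\ides)}\!\left(\frac{4s}{(1+s)^{2}},t\right)
=\frac{1}{n!}\sum_{k,m=0}^{n} f(n,k,m)\,\frac{(1+s)^{k}\,P_{k}^{\lpk}\!\left(\frac{4s}{(1+s)^{2}}\right)A_{m}(t)}{(1-s)^{k+1}(1-t)^{m+1}}.
\]

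Next, I would rename the variable $s$ as $u$ throughout this identity and then impose $s=4u/(1+u)^2$. Under this substitution the first argument of both $P_{n}^{(\lpk,\ides)}$ and $P_{k}^{\lpk}$ is exactly $s$, so these factors collapse to $P_{n}^{(\lpk,\ides)}(s,t)$ and $P_{k}^{\lpk}(s)$. Solving for $P_{n}^{(\lpk,\ides)}(s,t)$ and collecting the powers of $(1-u)$, $(1+u)$, and $(1-t)$ gives
\[
P_{n}^{(\lpk,\ides)}(s,t)=\frac{1}{n!}\sum_{k,m=0}^{n} f(n,k,m)\left(\frac{1-u}{1+u}\right)^{n-k}(1-t)^{n-m}P_{k}^{\lpk}(s)A_{m}(t),
\]
where the exponents work out as $n-k$ on $(1-u)/(1+u)$ and $n-m$ on $(1-t)$, exactly as in the bookkeeping carried out in the proof of Theorem~\ref{t-pkprod}.

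The only genuinely non-mechanical step, and the one I expect to be the main obstacle, is the simplification of the factor $\left((1-u)/(1+u)\right)^{n-k}$. From $s=4u/(1+u)^2$ one computes directly that $1-s=(1-u)^2/(1+u)^2$, so on the relevant branch $\frac{1-u}{1+u}=\sqrt{1-s}$ and hence $\left(\frac{1-u}{1+u}\right)^{n-k}=(1-s)^{(n-k)/2}$. Substituting this yields the claimed formula. Finally, I would note that $f(n,k,m)=0$ whenever $n-k$ is odd, so the half-integer exponent $(1-s)^{(n-k)/2}$ is always an honest integer power on the surviving terms and the formula contains no square roots.
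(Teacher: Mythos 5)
Your proposal is correct and is exactly the route the paper intends: the paper omits this proof as ``very similar'' to earlier ones, and you follow the template of Theorems \ref{t-pkprod} and \ref{t-lpkilpkprod} precisely, substituting \eqref{e-Blpk} into part (c) of the preceding theorem, inverting via $s=4u/(1+u)^{2}$, and simplifying with $1-s=(1-u)^{2}/(1+u)^{2}$ so that $\left(\frac{1-u}{1+u}\right)^{n-k}=(1-s)^{(n-k)/2}$. Your exponent bookkeeping checks out, and your closing remark that $f(n,k,m)=0$ when $n-k$ is odd correctly disposes of the apparent square roots.
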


See Table \ref{tb-desilpk} for the first several polynomials $P_{n}^{(\lpk,\ides)}(s,t)$.

\renewcommand{\arraystretch}{1.3}
\begin{table}[H]
\centering
\noindent \begin{raggedright}
{\small{}}%
\begin{tabular}{|>{\centering}m{0.1in}|>{\raggedright}p{6in}|}
\hline 
\multicolumn{1}{|c|}{{\small{}$n$}} & {\small{}$P_{n}^{(\lpk,\ides)}(s,t)$}\tabularnewline
\hline 
{\small{}$1$} & {\small{}$t$}\tabularnewline
\hline 
{\small{}$2$} & {\small{}$t+st^2$}\tabularnewline
\hline 
{\small{}$3$} & {\small{}$t+4st^2+st^{3}$}\tabularnewline
\hline 
{\small{}$4$} & {\small{}$t+(10s+s^{2})t^2+(7s+4s^{2})t^{3}+st^{4}$}\tabularnewline
\hline 
{\small{}$5$} & {\small{}$t+(20s+6s^{2})t^2+(27s+39s^{2})t^{3}+(10s+16s^{2})t^{4}+st^{5}$}\tabularnewline
\hline 
{\small{}$6$} & {\small{}$t+(35s+21s^{2}+s^{3})t^2+(77s+205s^{2}+20s^{3})t^{3}+(53s+213s^{2}+36s^{3})t^{4}+(13s+40s^{2}+4s^{3})t^{5}+st^{6}$}\tabularnewline
\hline 
{\small{}$7$} & {\small{}$t+(56s+56s^{2}+8s^{3})t^2+(182s+776s^{2}+233s^{3})t^{3}+(200s+1480s^{2}+736s^{3})t^{4}+(88s+719s^{2}+384s^{3})t^{5}+(16s+80s^{2}+24s^{3})t^{6}+st^{7}$}\tabularnewline
\hline 
\end{tabular}{\small\par}
\par\end{raggedright}
\caption{\label{tb-desilpk}Joint distribution of $\protect\lpk$ and $\protect\ides$
over $\mathfrak{S}_{n}$}
\end{table}

\section{\label{s-udr}Up-down runs and biruns}

We will now give analogous formulas for (mixed) two-sided distributions involving
the number of up-down runs, as well as a couple involving the number
of biruns.

\subsection{Up-down runs and inverse up-down runs}

Consider the two-sided up-down run polynomials
\[
P_{n}^{(\udr,\iudr)}(s,t)\coloneqq\sum_{\pi\in\mathfrak{S}_{n}}s^{\udr(\pi)}t^{\iudr(\pi)}.
\]

\begin{thm}
\label{t-udr}We have 
\begin{multline*}
\frac{1}{(1-s)(1-t)}+\frac{1}{4(1-s)^{2}(1-t)^{2}}\sum_{n=1}^{\infty}\frac{(1+s^{2})^{n}(1+t^{2})^{n}}{(1-s^{2})^{n-1}(1-t^{2})^{n-1}}P_{n}^{(\udr,\iudr)}\left(\frac{2s}{1+s^{2}},\frac{2t}{1+t^{2}}\right)x^{n}\\
=\sum_{i,j=0}^{\infty}\left(\frac{1+x}{1-x}\right)^{2ij}\left(1+s\left(\frac{1+x}{1-x}\right)^{j}+t\left(\frac{1+x}{1-x}\right)^{i}+st\frac{(1+x)^{i+j}}{(1-x)^{i+j+1}}\right)s^{2i}t^{2j}.
\end{multline*}
\end{thm}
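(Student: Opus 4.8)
The plan is to evaluate the scalar product
\[
\left\langle \frac{1+sH(x)}{1-s^{2}E(x)H(x)},\frac{1+tH}{1-t^{2}EH}\right\rangle
\]
in two different ways, following the template of the proofs of Theorems \ref{t-pkdes} and \ref{t-lpkdes}. For the first evaluation I would expand each factor by Lemma \ref{l-ribexp} (d): the $s$-factor contributes $\frac{1}{1-s}$ plus $\frac{1}{2(1-s)^{2}}\sum_{m\geq1}\sum_{L\vDash m}\frac{(1+s^{2})^{m}}{(1-s^{2})^{m-1}}\left(\frac{2s}{1+s^{2}}\right)^{\udr(L)}r_{L}x^{m}$, and the $t$-factor the analogous expansion in $r_{M}$ with $H,E$ in place of $H(x),E(x)$, hence with no power of $x$. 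Since a constant is orthogonal to every $r_{L}$ of positive degree, the only surviving contributions are $\langle\frac{1}{1-s},\frac{1}{1-t}\rangle=\frac{1}{(1-s)(1-t)}$ together with the double sum over $L,M$. Applying Foulkes's theorem (Theorem \ref{t-foulkes}) to $\langle r_{L},r_{M}\rangle$ collapses this to a single sum over $n$, as only $|L|=|M|$ terms survive, and reindexes it over permutations $\pi$ via $\udr(\Comp(\pi))=\udr(\pi)$ and $\udr(\Comp(\pi^{-1}))=\iudr(\pi)$. This yields exactly the left-hand side, the inner sum being $P_{n}^{(\udr,\iudr)}(\frac{2s}{1+s^{2}},\frac{2t}{1+t^{2}})$ and the prefactor $\frac{1}{4(1-s)^{2}(1-t)^{2}}$ arising as $\frac12\cdot\frac12$.

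For the second evaluation I would expand $\frac{1+sH(x)}{1-s^{2}E(x)H(x)}=\sum_{i\geq0}s^{2i}E(x)^{i}H(x)^{i}+\sum_{i\geq0}s^{2i+1}E(x)^{i}H(x)^{i+1}$ and the $t$-factor analogously, so the scalar product splits into four families according to which summand is chosen from each factor. In the two families where the chosen second summand is $\sum_{j}t^{2j}E^{j}H^{j}$ (so the second argument is $E^{j}H^{j}$), the evaluation is direct: the homomorphism property (Lemma \ref{l-pkdeshom} at $y=1$) together with Lemma \ref{l-scalprodHE} at $\alpha=-1$ gives $\langle E(x),E^{j}H^{j}\rangle=\langle H(x),E^{j}H^{j}\rangle=(1+x)^{j}/(1-x)^{j}$, from which the $s^{2i}t^{2j}$ and $s^{2i+1}t^{2j}$ coefficients come out to $(\frac{1+x}{1-x})^{2ij}$ and $(\frac{1+x}{1-x})^{2ij+j}$ respectively.

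The main obstacle is the remaining two families, in which the chosen second summand is $\sum_{j}t^{2j+1}E^{j}H^{j+1}$ and the second argument carries an extra factor of $H$. For these I would use the plethystic shift exactly as in the proof of Theorem \ref{t-lpkdes}: by Lemma \ref{l-sppleth} with $\mathsf{m}=1$ one has $\langle f,H\cdot E^{j}H^{j}\rangle=\langle f[X+1],E^{j}H^{j}\rangle$, and Lemma \ref{l-X+1} (at $\mathsf{m}=1$) evaluates $H(x)[X+1]=H(x)/(1-x)$ and $E(x)[X+1]=(1+x)E(x)$. This converts the extra $H$ into a scalar factor $(1+x)^{i}/(1-x)^{i}$ (with one further $1/(1-x)$ when the first argument also carries an extra $H$), after which Lemma \ref{l-pkdeshom} and Lemma \ref{l-scalprodHE} finish the job, producing the $s^{2i}t^{2j+1}$ and $s^{2i+1}t^{2j+1}$ coefficients $(\frac{1+x}{1-x})^{2ij+i}$ and $\frac{(1+x)^{2ij+i+j}}{(1-x)^{2ij+i+j+1}}$. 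Assembling the four contributions and factoring out $(\frac{1+x}{1-x})^{2ij}s^{2i}t^{2j}$ gives precisely the bracketed expression $1+s(\frac{1+x}{1-x})^{j}+t(\frac{1+x}{1-x})^{i}+st\frac{(1+x)^{i+j}}{(1-x)^{i+j+1}}$ on the right-hand side. Equating the two evaluations completes the proof; the only delicate point is the bookkeeping of the four parity classes and tracking which argument carries the extra $H$, everything else being routine once the scalar-product lemmas are applied.
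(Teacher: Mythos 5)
Your proposal is correct and matches the paper's proof essentially step for step: both evaluate $\left\langle \frac{1+sH(x)}{1-s^{2}E(x)H(x)},\frac{1+tH}{1-t^{2}EH}\right\rangle$ once via Lemma \ref{l-ribexp} (d) combined with Foulkes's theorem to get the left-hand side, and once via the geometric expansion together with Lemmas \ref{l-pkdeshom}, \ref{l-scalprodHE}, \ref{l-sppleth}, and \ref{l-X+1} (with the plethystic shift $[X+1]$ absorbing the extra factor of $H$) to get the right-hand side, with all four coefficient evaluations agreeing with the paper's. The only difference is organizational: the paper splits the second factor into its $1$ and $tH$ parts and computes two scalar products, each containing two sub-terms, whereas you track the same four parity families $s^{2i}t^{2j}$, $s^{2i+1}t^{2j}$, $s^{2i}t^{2j+1}$, $s^{2i+1}t^{2j+1}$ directly.
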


The proof of Theorem \ref{t-udr} follows the same structure as our
proofs from Sections \ref{s-pkdes}\textendash \ref{s-lpk}: we compute
an appropriate scalar product in multiple ways and set them equal
to each other. We shall provide the full proof here, but will omit
the proofs for all remaining results in this section due to their
similarity with what has been presented earlier.
\begin{proof}
First, Lemma \ref{l-ribexp} (d) along with Foulkes's theorem (Theorem
\ref{t-foulkes}) leads to
\begin{alignat}{1}
 & \left\langle \frac{1+sH(x)}{1-s^{2}E(x)H(x)},\frac{1+tH}{1-t^{2}EH}\right\rangle =\frac{1}{(1-s)(1-t)}\nonumber \\
 & \qquad+\frac{1}{4(1-s)^{2}(1-t)^{2}}\sum_{n=1}^{\infty}\frac{(1+s^{2})^{n}(1+t^{2})^{n}}{(1-s^{2})^{n-1}(1-t^{2})^{n-1}}P_{n}^{(\udr,\iudr)}\left(\frac{2s}{1+s^{2}},\frac{2t}{1+t^{2}}\right)x^{n}.\label{e-udr0}
\end{alignat}
Next, using Lemmas \ref{l-pkdeshom} and \ref{l-scalprodHE}, we obtain {\allowdisplaybreaks
\begin{alignat}{1}
 & \left\langle \frac{1+sH(x)}{1-s^{2}E(x)H(x)},\frac{1}{1-t^{2}EH}\right\rangle =\sum_{i,j=0}^{\infty}\left\langle (1+sH(x))E(x)^{i}H(x)^{i},E^{j}H^{j}\right\rangle s^{2i}t^{2j}\nonumber \\
 & \qquad=\sum_{i,j=0}^{\infty}\left(\left\langle E(x)^{i}H(x)^{i},E^{j}H^{j}\right\rangle +s\left\langle E(x)^{i}H(x)^{i+1},E^{j}H^{j}\right\rangle \right)s^{2i}t^{2j}\nonumber \\
 & \qquad=\sum_{i,j=0}^{\infty}\left(\left\langle E(x),E^{j}H^{j}\right\rangle ^{i}\left\langle H(x),E^{j}H^{j}\right\rangle ^{i}+s\left\langle E(x),E^{j}H^{j}\right\rangle ^{i}\left\langle H(x),E^{j}H^{j}\right\rangle ^{i+1}\right)s^{2i}t^{2j}\nonumber \\
 & \qquad=\sum_{i,j=0}^{\infty}\left(\left(\frac{1+x}{1-x}\right)^{2ij}+s\left(\frac{1+x}{1-x}\right)^{2ij+j}\right)s^{2i}t^{2j}\nonumber \\
 & \qquad=\sum_{i,j=0}^{\infty}\left(\frac{1+x}{1-x}\right)^{2ij}\left(1+s\left(\frac{1+x}{1-x}\right)^{j}\right)s^{2i}t^{2j}.\label{e-udr1}
\end{alignat}
}Similarly, using Lemmas \ref{l-pkdeshom}\textendash \ref{l-X+1}, we obtain{\allowdisplaybreaks
\begin{alignat}{1}
 & \left\langle \frac{1+sH(x)}{1-s^{2}E(x)H(x)},\frac{tH}{1-t^{2}EH}\right\rangle =\sum_{i,j=0}^{\infty}\left\langle (1+sH(x))E(x)^{i}H(x)^{i},E^{j}H^{j+1}\right\rangle s^{2i}t^{2j+1}\nonumber \\
 & \qquad\qquad\qquad=\sum_{i,j=0}^{\infty}\left\langle ((1+sH(x))E(x)^{i}H(x)^{i})[X+1],E^{j}H^{j}\right\rangle s^{2i}t^{2j+1}\nonumber \\
 & \qquad\qquad\qquad=\sum_{i,j=0}^{\infty}\left\langle (E(x)^{i}H(x)^{i}+sE(x)^{i}H(x)^{i+1})[X+1],E^{j}H^{j}\right\rangle s^{2i}t^{2j+1}\nonumber \\
 & \qquad\qquad\qquad=\sum_{i,j=0}^{\infty}\left\langle (E(x)^{i}H(x)^{i}+sE(x)^{i}H(x)^{i+1})[X+1],E^{j}H^{j}\right\rangle s^{2i}t^{2j+1}\nonumber \\
 & \qquad\qquad\qquad=\sum_{i,j=0}^{\infty}\left\langle \left(\frac{1+x}{1-x}\right)^{i}E(x)^{i}H(x)^{i}+s\frac{(1+x)^{i}}{(1-x)^{i+1}}E(x)^{i}H(x)^{i+1},E^{j}H^{j}\right\rangle s^{2i}t^{2j+1}\nonumber \\
 & \qquad\qquad\qquad=\sum_{i,j=0}^{\infty}\Bigg(\left(\frac{1+x}{1-x}\right)^{i}\left\langle E(x),E^{j}H^{j}\right\rangle ^{i}\left\langle H(x),E^{j}H^{j}\right\rangle ^{i}\nonumber \\
 & \qquad\qquad\qquad\qquad\qquad\qquad\qquad+s\frac{(1+x)^{i}}{(1-x)^{i+1}}\left\langle E(x),E^{j}H^{j}\right\rangle ^{i}\left\langle H(x),E^{j}H^{j}\right\rangle ^{i+1}\Bigg)s^{2i}t^{2j+1}\nonumber \\
 & \qquad\qquad\qquad=\sum_{i,j=0}^{\infty}\left(\left(\frac{1+x}{1-x}\right)^{2ij+i}+s\frac{(1+x)^{2ij+i+j}}{(1-x)^{2ij+i+j+1}}\right)s^{2i}t^{2j+1}\nonumber \\
 & \qquad\qquad\qquad=\sum_{i,j=0}^{\infty}\left(\frac{1+x}{1-x}\right)^{2ij}\left(t\left(\frac{1+x}{1-x}\right)^{i}+st\frac{(1+x)^{i+j}}{(1-x)^{i+j+1}}\right)s^{2i}t^{2j}.\label{e-udr2}
\end{alignat}
}Summing (\ref{e-udr1}) and (\ref{e-udr2}) yields
\begin{multline*}
\left\langle \frac{1+sH(x)}{1-s^{2}E(x)H(x)},\frac{1+tH}{1-t^{2}EH}\right\rangle =\\
\sum_{i,j=0}^{\infty}\left(\frac{1+x}{1-x}\right)^{2ij}\left(1+s\left(\frac{1+x}{1-x}\right)^{j}+t\left(\frac{1+x}{1-x}\right)^{i}+st\frac{(1+x)^{i+j}}{(1-x)^{i+j+1}}\right)s^{2i}t^{2j};
\end{multline*}
comparing this with (\ref{e-udr0}) completes the proof.
\end{proof}

The first several polynomials $P_{n}^{(\udr,\iudr)}(s,t)$ are given
in Table \ref{tb-udr}. 
We note that the permutations in $\mathfrak{S}_{n}$
with $n$ up-down runs are precisely the alternating permutations
in $\mathfrak{S}_{n}$, so the coefficient of $s^{n}t^{n}$ of $P_{n}^{(\udr,\iudr)}(s,t)$
is the number of doubly alternating permutations in $\mathfrak{S}_{n}$.
In fact, Stanley's formula \eqref{e-altalt} for doubly alternating permutations
can be derived from Theorem \ref{t-udr}, although it is simpler
to work with our formulas for $(\pk,\ipk)$ and $(\lpk,\ilpk)$.

\renewcommand{\arraystretch}{1.3}
\begin{table}[H]
\centering
\noindent \begin{raggedright}
{\small{}}%
\begin{tabular}{|>{\centering}m{0.1in}|>{\raggedright}p{6in}|}
\hline 
\multicolumn{1}{|c|}{{\small{}$n$}} & {\small{}$P_{n}^{(\udr,\iudr)}(s,t)$}\tabularnewline
\hline 
{\small{}$1$} & {\small{}$st$}\tabularnewline
\hline 
{\small{}$2$} & {\small{}$st+s^{2}t^{2}$}\tabularnewline
\hline 
{\small{}$3$} & {\small{}$st+(2s^{2}+s^{3})t^{2}+(s^{2}+s^{3})t^{3}$}\tabularnewline
\hline 
{\small{}$4$} & {\small{}$st+(3s^{2}+3s^{3}+s^{4})t^{2}+(3s^{2}+6s^{3}+2s^{4})t^{3}+(s^{2}+2s^{3}+2s^{4})t^{4}$}\tabularnewline
\hline 
{\small{}$5$} & {\small{}$st+(4s^{2}+6s^{3}+4s^{4}+s^{5})t^{2}+(6s^{2}+19s^{3}+13s^{4}+5s^{5})t^{3}+(4s^{2}+13s^{3}+21s^{4}+7s^{5})t^{4}+(s^{2}+5s^{3}+7s^{4}+3s^{5})t^{5}$}\tabularnewline
\hline 
{\small{}$6$} & {\small{}$st+(5s^{2}+10s^{3}+10s^{4}+5s^{5}+s^{6})t^{2}+(10s^{2}+45s^{3}+47s^{4}+38s^{5}+8s^{6})t^{3}+(10s^{2}+47s^{3}+109s^{4}+78s^{5}+24s^{6})t^{4}+(5s^{2}+38s^{3}+78s^{4}+70s^{5}+20s^{6})t^{5}+(s^{2}+8s^{3}+24s^{4}+20s^{5}+8s^{6})t^{6}$}\tabularnewline
\hline 
\end{tabular}{\small\par}
\par\end{raggedright}
\caption{\label{tb-udr}Joint distribution of $\protect\udr$ and $\protect\iudr$
over $\mathfrak{S}_{n}$}
\end{table}

To invert Theorem \ref{t-udr} and other formulas in this section,
we will need to solve $s=2u/(1+u^{2})$ for $u$, and the solution
is given by $u=s^{-1}(1-\sqrt{1-s^{2}})=(s/2)C(s^{2}/4)$ where $C(x)$
is the Catalan number generating function. Hence, Theorem \ref{t-udr}
can be written as
\begin{multline*}
\frac{1}{(1-u)(1-v)}+\frac{1}{4(1-u)^{2}(1-v)^{2}}\sum_{n=1}^{\infty}\frac{(1+u^{2})^{n}(1+v^{2})^{n}}{(1-u^{2})^{n-1}(1-v^{2})^{n-1}}P_{n}^{(\udr,\iudr)}(s,t)x^{n}\\
=\sum_{i,j=0}^{\infty}\left(\frac{1+x}{1-x}\right)^{2ij}\left(1+u\left(\frac{1+x}{1-x}\right)^{j}+v\left(\frac{1+x}{1-x}\right)^{i}+uv\frac{(1+x)^{i+j}}{(1-x)^{i+j+1}}\right)u^{2i}v^{2j}
\end{multline*}
where $u=s^{-1}(1-\sqrt{1-s^{2}})=(s/2)C(s^{2}/4)$ and $v=t^{-1}(1-\sqrt{1-t^{2}})=(t/2)C(t^{2}/4)$.

\subsection{Up-down runs, inverse peaks, and inverse descents}

Next, let us study the statistic $(\udr,\ipk,\ides)$ and its specializations
$(\udr,\ipk)$ and $(\udr,\ides)$. Define 
\begin{align*}
P_{n}^{(\udr,\ipk,\ides)}(s,y,t) & \coloneqq\sum_{\pi\in\mathfrak{S}_{n}}s^{\udr(\pi)}y^{\ipk(\pi)+1}t^{\ides(\pi)+1},\\
P_{n}^{(\udr,\ipk)}(s,t) & \coloneqq P_{n}^{(\udr,\ipk,\ides)}(s,t,1)=\sum_{\pi\in\mathfrak{S}_{n}}s^{\udr(\pi)}t^{\ipk(\pi)+1},\text{ and}\\
P_{n}^{(\udr,\ides)}(s,t) & \coloneqq P_{n}^{(\udr,\ipk,\ides)}(s,1,t)=\sum_{\pi\in\mathfrak{S}_{n}}s^{\udr(\pi)}t^{\ides(\pi)+1}.
\end{align*}

\begin{thm}
\label{t-udripkides}We have \leqnomode 
\begin{multline*}
\tag{{a}}\frac{1}{(1-s)(1-t)}+\sum_{n=1}^{\infty}\frac{(1+s^{2})^{n}}{(1-s^{2})^{n-1}}\left(\frac{1+yt}{1-t}\right)^{n+1}\frac{P_{n}^{(\udr,\ipk,\ides)}\left(\frac{2s}{1+s^{2}},\frac{(1+y)^{2}t}{(y+t)(1+yt)},\frac{y+t}{1+yt}\right)}{2(1-s)^{2}(1+y)}x^{n}\\
=\sum_{i,j=0}^{\infty}\left(\frac{(1+x)(1+yx)}{(1-yx)(1-x)}\right)^{ij}\left(1+s\left(\frac{1+yx}{1-x}\right)^{j}\right)s^{2i}t^{j}
\end{multline*}
and, for all $n\geq1$, we have
\begin{alignat*}{1}
\tag{{b}}\qquad & \frac{(1+s^{2})^{n}}{(1-s^{2})^{n-1}}\left(\frac{1+yt}{1-t}\right)^{n+1}\frac{P_{n}^{(\udr,\ipk,\ides)}\left(\frac{2s}{1+s^{2}},\frac{(1+y)^{2}t}{(y+t)(1+yt)},\frac{y+t}{1+yt}\right)}{2(1-s)^{2}(1+y)}\\
 & \qquad\qquad\qquad\qquad\qquad\qquad=\sum_{\substack{\lambda\vdash n\\
\mathrm{odd}
}
}\frac{2^{l(\lambda)}}{z_{\lambda}}\frac{A_{l(\lambda)}(s^{2})A_{l(\lambda)}(t)}{(1-s^{2})^{l(\lambda)+1}(1-t)^{l(\lambda)+1}}\prod_{k=1}^{l(\lambda)}(1-(-y)^{\lambda_{k}})\\
 & \qquad\qquad\qquad\qquad\qquad\qquad\quad+s\sum_{\lambda\vdash n}\frac{1}{z_{\lambda}}\frac{B_{o(\lambda)}(s^{2})A_{l(\lambda)}(t)}{(1-s^{2})^{o(\lambda)+1}(1-t)^{l(\lambda)+1}}\prod_{k=1}^{l(\lambda)}(1-(-y)^{\lambda_{k}}).
\end{alignat*}
\end{thm}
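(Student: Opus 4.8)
The plan is to evaluate the single scalar product
\[
\left\langle \frac{1+sH(x)}{1-s^{2}E(x)H(x)},\ \frac{1}{1-tE(y)H}\right\rangle
\]
in three different ways, exactly as in the proofs of Theorems \ref{t-pkdes} and \ref{t-udr}. The first factor is the $\udr$-generating function of Lemma \ref{l-ribexp} (d) with $t$ replaced by $s$, so the coefficient of $x^{n}$ records $\udr$ through the ribbons $r_{L}$; the second factor is Lemma \ref{l-ribexp} (b) evaluated at $x=1$, whose degree-$n$ part records $\pk$ and $\des$ through ribbons $r_{M}$ with peak base $(1+y)^{2}t/((y+t)(1+yt))$ and descent base $(y+t)/(1+yt)$. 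First I would pair these two ribbon expansions, using bilinearity to split off the degree-$0$ contribution $1/((1-s)(1-t))$, and then apply Foulkes's theorem (Theorem \ref{t-foulkes}) to rewrite $\sum_{L,M}(\cdots)\langle r_{L},r_{M}\rangle$ as a sum over $\pi\in\mathfrak{S}_{n}$. Since $L=\Comp(\pi)$ and $M=\Comp(\pi^{-1})$, the $\udr$-weight on $L$ and the $\pk,\des$-weights on $M$ become $\udr(\pi)$, $\ipk(\pi)$, and $\ides(\pi)$, producing $P_{n}^{(\udr,\ipk,\ides)}$ with the stated arguments and reproducing the left-hand side common to (a) and (b).

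For the right-hand side of (a), I would instead expand both factors as geometric series, writing $\frac{1+sH(x)}{1-s^{2}E(x)H(x)}=\sum_{i\geq0}(E(x)H(x))^{i}s^{2i}+s\sum_{i\geq0}E(x)^{i}H(x)^{i+1}s^{2i}$ and $\frac{1}{1-tE(y)H}=\sum_{j\geq0}E(y)^{j}H^{j}t^{j}$, and then invoking the homomorphism Lemma \ref{l-pkdeshom} (with $k=j$) to factor $\langle\,\cdot\,,E(y)^{j}H^{j}\rangle$ across products. By Lemma \ref{l-scalprodHE} the two basic pairings are $\langle H(x),E(y)^{j}H^{j}\rangle=(1+yx)^{j}/(1-x)^{j}$ and $\langle E(x),E(y)^{j}H^{j}\rangle=(1+x)^{j}/(1-yx)^{j}$. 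The ``$1$'' part of the numerator then contributes $\left(\frac{(1+x)(1+yx)}{(1-yx)(1-x)}\right)^{ij}s^{2i}t^{j}$, while the ``$sH(x)$'' part contributes one extra factor $\langle H(x),E(y)^{j}H^{j}\rangle=\left(\frac{1+yx}{1-x}\right)^{j}$, yielding the bracket $1+s\left(\frac{1+yx}{1-x}\right)^{j}$ of (a). A key simplification here is that, unlike in the proof of Theorem \ref{t-udr}, the second factor carries no extra $H$, so no plethystic $[X+1]$ adjustment (Lemmas \ref{l-sppleth} and \ref{l-X+1}) is needed; all the complexity sits in the first slot and is resolved by the homomorphism property alone.

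For part (b), I would expand the same scalar product through power sums. The first factor is handled by the power-sum expansion of the $\udr$-generating function (Lemma \ref{l-psexp} (c), with $t\mapsto s$), which splits into an odd-$\lambda$ sum weighted by $2^{l(\lambda)}A_{l(\lambda)}(s^{2})/(1-s^{2})^{l(\lambda)+1}$ and an $s$-weighted sum over all $\lambda$ with $B_{o(\lambda)}(s^{2})/(1-s^{2})^{o(\lambda)+1}$; the second factor is handled by Lemma \ref{l-psexp} (a) at $x=1$, contributing $\frac{A_{l(\mu)}(t)}{(1-t)^{l(\mu)+1}}\prod_{k}(1-(-y)^{\mu_{k}})$. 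Applying $\langle p_{\lambda},p_{\mu}\rangle=z_{\lambda}\delta_{\lambda\mu}$ collapses the double sum to a single sum over $\lambda$, and extracting the coefficient of $x^{n}$ (equivalently, restricting to $\lambda\vdash n$) gives precisely the two-term right-hand side of (b). Equating the Way-2 expression with the Way-1 left-hand side proves (a), and equating the Way-3 expression with it proves (b).

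The hard part will be organizational rather than conceptual: keeping the two-term numerator $1+sH(x)$ coherent through all three computations, correctly propagating the $s\mapsto s^{2}$ substitution built into the $\udr$-generating function (so that $A_{l(\lambda)}$ and $B_{o(\lambda)}$ are evaluated at $s^{2}$ while $A_{l(\mu)}$ on the inverse side stays at $t$), and verifying that the prefactors coming from Lemmas \ref{l-ribexp} (b) and (d)---namely $\frac{1}{2(1-s)^{2}(1+y)}$, $\frac{(1+s^{2})^{n}}{(1-s^{2})^{n-1}}$, and $\left(\frac{1+yt}{1-t}\right)^{n+1}$---assemble exactly as written. None of these steps introduces a genuinely new difficulty beyond the bookkeeping, since each ingredient is already available in the cited lemmas.
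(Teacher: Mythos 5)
Your proposal is correct and follows exactly the route the paper intends for this theorem (whose proof is omitted as similar to that of Theorem \ref{t-udr}): evaluate the scalar product $\left\langle \frac{1+sH(x)}{1-s^{2}E(x)H(x)},\frac{1}{1-tE(y)H}\right\rangle$ three ways, namely via Lemma \ref{l-ribexp} (b) and (d) plus Foulkes's theorem for the common left-hand side, via geometric expansion with Lemmas \ref{l-pkdeshom} and \ref{l-scalprodHE} for part (a) (where your observation that no $[X+1]$ plethysm adjustment is needed, since the second slot carries only $E(y)^{j}H^{j}$, is accurate), and via Lemma \ref{l-psexp} (a) and (c) together with $\left\langle p_{\lambda},p_{\mu}\right\rangle =z_{\lambda}\delta_{\lambda\mu}$ for part (b). A small point in your favor: you correctly apply the power-sum expansion to the series with $1-s^{2}E(x)H(x)$ in the denominator, which is what Lemma \ref{l-psexp} (c) must read for its right-hand side to be consistent (the lemma as printed omits the square on $t$), so your use of it is the right one.
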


As before, we set $y=1$ to specialize to $(\udr,\ipk)$, immediately
arriving at parts (a) and (b) of the following theorem. Part (c) is
proven similarly to Theorem \ref{t-pkprod}, except that we also use
the formula 
\begin{equation*}
2^{n}A_{n}(t^{2})+tB_{n}(t^{2})=\frac{(1+t)^{2}(1+t^{2})^{n}}{2}P_{n}^{\udr}\left(\frac{2t}{1+t^{2}}\right)
\end{equation*}
\cite[Section 6.3]{Gessel2020} where
\[
P_{n}^{\udr}(t)\coloneqq\sum_{\pi\in\mathfrak{S}_{n}}t^{\udr(\pi)}.
\]

\begin{thm}
\label{t-udripk}We have \leqnomode 
\begin{multline*}
\tag{{a}}\frac{1}{(1-s)(1-t)}+\frac{1}{4(1-s)^{2}}\sum_{n=1}^{\infty}\frac{(1+s^{2})^{n}}{(1-s^{2})^{n-1}}\left(\frac{1+t}{1-t}\right)^{n+1}P_{n}^{(\udr,\ipk)}\left(\frac{2s}{1+s^{2}},\frac{4t}{(1+t)^{2}}\right)x^{n}\\
=\sum_{i,j=0}^{\infty}\left(\frac{1+x}{1-x}\right)^{2ij}\left(1+s\left(\frac{1+x}{1-x}\right)^{j}\right)s^{2i}t^{j}
\end{multline*}
and, for all $n\geq1$, we have 
\begin{multline*}
\tag{{b}}\qquad\frac{1}{4(1-s)^{2}}\frac{(1+s^{2})^{n}}{(1-s^{2})^{n-1}}\left(\frac{1+t}{1-t}\right)^{n+1}P_{n}^{(\udr,\ipk)}\left(\frac{2s}{1+s^{2}},\frac{4t}{(1+t)^{2}}\right)\\
=\frac{1}{n!}\sum_{k=0}^n 2^{k}d(n,k)\frac{(2^{k}A_{k}(s^{2})+sB_{k}(s^{2}))A_{k}(t)}{(1-s^{2})^{k+1}(1-t)^{k+1}}\qquad
\end{multline*}
and 
\begin{align*}
\tag{{c}}\qquad P_{n}^{(\udr,\ipk)}(s,t) & =\frac{1}{n!}\sum_{k=0}^n d(n,k)\left((1-s^{2})(1-t)\right)^{\frac{n-k}{2}}P_{k}^{\udr}(s)P_{k}^{\pk}(t).
\end{align*}
\end{thm}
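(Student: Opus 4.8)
The plan is to obtain parts (a) and (b) by specializing the previous theorem, Theorem \ref{t-udripkides}, at $y=1$, and to obtain the product formula (c) by the same route used to prove Theorem \ref{t-pkprod}, now feeding in the up-down run analogue of the peak--Eulerian identity.

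First I would set $y=1$ throughout Theorem \ref{t-udripkides}. Under this specialization the peak substitution $\tfrac{(1+y)^2 t}{(y+t)(1+yt)}$ collapses to $\tfrac{4t}{(1+t)^2}$, the descent substitution $\tfrac{y+t}{1+yt}$ becomes $1$ (so the $\ides$ slot is turned off, leaving $t$ to track $\ipk$), the prefactor $\tfrac{1}{1+y}$ becomes $\tfrac12$, and on the right-hand side $\tfrac{(1+x)(1+yx)}{(1-yx)(1-x)}$ becomes $\bigl(\tfrac{1+x}{1-x}\bigr)^2$ while $\tfrac{1+yx}{1-x}$ becomes $\tfrac{1+x}{1-x}$; this yields part (a) directly. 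For part (b) I would instead set $y=1$ in the partition-sum expression of Theorem \ref{t-udripkides}(b). Here the crucial observation is that $\prod_{k}(1-(-1)^{\lambda_k})$ vanishes unless every part of $\lambda$ is odd, in which case it equals $2^{l(\lambda)}$; thus both sums restrict to odd partitions (for which $o(\lambda)=l(\lambda)$), and after combining them and grouping by $l(\lambda)=k$, Lemma \ref{l-cdef}(b) replaces $\sum_{\lambda\vdash n,\ \mathrm{odd},\ l(\lambda)=k} 1/z_\lambda$ by $d(n,k)/n!$. A short check confirms that the $A_k(s^2)A_k(t)$ term carries weight $2^k\cdot 2^k$ and the $B_k(s^2)A_k(t)$ term weight $s\cdot 2^k$, so the summand numerator is $2^k d(n,k)\bigl(2^k A_k(s^2)+sB_k(s^2)\bigr)A_k(t)$, matching (b).

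For part (c) I would begin from (b) and substitute two product identities: the displayed up-down run identity $2^k A_k(s^2)+sB_k(s^2)=\tfrac{(1+s)^2(1+s^2)^k}{2}P_k^{\udr}\bigl(\tfrac{2s}{1+s^2}\bigr)$, and Equation \eqref{e-Apk} in the form $A_k(t)=\bigl(\tfrac{1+t}{2}\bigr)^{k+1}P_k^{\pk}\bigl(\tfrac{4t}{(1+t)^2}\bigr)$. Writing $\sigma=\tfrac{2s}{1+s^2}$ and $\tau=\tfrac{4t}{(1+t)^2}$ for the two substituted arguments, the goal is to clear the prefactors so that the left side becomes $P_n^{(\udr,\ipk)}(\sigma,\tau)$ with nothing left over. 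Carrying out the bookkeeping, the numerical constants cancel to $1$, the $s$-dependent factors collapse (using $(1-s)^2(1+s)^2=(1-s^2)^2$) to $\bigl(\tfrac{1-s^2}{1+s^2}\bigr)^{n-k}$, and the $t$-dependent factors collapse to $\bigl(\tfrac{1-t}{1+t}\bigr)^{n-k}$. The final step is to re-express these in the new variables via $\tfrac{1-s^2}{1+s^2}=\sqrt{1-\sigma^2}$ and $\tfrac{1-t}{1+t}=\sqrt{1-\tau}$, which gives the claimed factor $\bigl((1-\sigma^2)(1-\tau)\bigr)^{(n-k)/2}$ after relabeling $\sigma,\tau$ as $s,t$.

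The main obstacle is purely bookkeeping: one must track the several powers of $(1\pm s)$, $(1\pm s^2)$, and $(1\pm t)$ appearing in the prefactors of (b) and in the two substituted identities, and verify that they telescope to the symmetric exponent $(n-k)/2$. The apparent half-integer powers and square roots are harmless because $d(n,k)=0$ whenever $n-k$ is odd, so every surviving term has $n-k$ even; this is exactly the phenomenon already noted after Theorem \ref{t-pkprod}.
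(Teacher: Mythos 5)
Your proposal is correct and takes essentially the same route as the paper: the paper likewise obtains (a) and (b) by setting $y=1$ in Theorem \ref{t-udripkides} (with the collapse of $\prod_{k}(1-(-1)^{\lambda_{k}})$ to odd partitions and Lemma \ref{l-cdef} (b), exactly as you describe), and proves (c) by the argument of Theorem \ref{t-pkprod} combined with the identity $2^{k}A_{k}(s^{2})+sB_{k}(s^{2})=\frac{(1+s)^{2}(1+s^{2})^{k}}{2}P_{k}^{\udr}\left(\frac{2s}{1+s^{2}}\right)$ and Equation \eqref{e-Apk}. Your bookkeeping is accurate throughout, including the cancellation via $(1-s)^{2}(1+s)^{2}=(1-s^{2})^{2}$, the re-expressions $\frac{1-s^{2}}{1+s^{2}}=\sqrt{1-\sigma^{2}}$ and $\frac{1-t}{1+t}=\sqrt{1-\tau}$, and the observation that $d(n,k)=0$ for $n-k$ odd, which disposes of the apparent half-integer powers.
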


\renewcommand{\arraystretch}{1.3}
\begin{table}[H]
\centering
\noindent \begin{raggedright}
{\small{}}%
\begin{tabular}{|>{\centering}m{0.1in}|>{\raggedright}p{6in}|}
\hline 
\multicolumn{1}{|c|}{{\small{}$n$}} & {\small{}$P_{n}^{(\udr,\ipk)}(s,t)$}\tabularnewline
\hline 
{\small{}$1$} & {\small{}$st$}\tabularnewline
\hline 
{\small{}$2$} & {\small{}$(s+s^{2})t$}\tabularnewline
\hline 
{\small{}$3$} & {\small{}$(s+2s^{2}+s^{3})t+(s^{2}+s^{3})t^{2}$}\tabularnewline
\hline 
{\small{}$4$} & {\small{}$(s+3s^{2}+3s^{3}+s^{4})t+(4s^{2}+8s^{3}+4s^{4})t^{2}$}\tabularnewline
\hline 
{\small{}$5$} & {\small{}$(s+4s^{2}+6s^{3}+4s^{4}+s^{5})t+(10s^{2}+32s^{3}+34s^{4}+12s^{5})t^{2}+(s^{2}+5s^{3}+7s^{4}+3s^{5})t^{3}$}\tabularnewline
\hline 
{\small{}$6$} & {\small{}$(s+5s^{2}+10s^{3}+10s^{4}+5s^{5}+s^{6})t+(20s^{2}+92s^{3}+156s^{4}+116s^{5}+32s^{6})t^{2}+(6s^{2}+46s^{3}+102s^{4}+90s^{5}+28s^{6})t^{3}$}\tabularnewline
\hline 
{\small{}$7$} & {\small{}$(s+6s^{2}+15s^{2}+20s^{4}+15s^{5}+6s^{6}+s^{7})t+(35s^{2}+217s^{3}+522s^{4}+614s^{5}+355s^{6}+81s^{7})t^{2}+(21s^{2}+231s^{3}+738s^{4}+1038s^{5}+681s^{6}+171s^{7})t^{3}+(s^{2}+17s^{3}+64s^{4}+100s^{5}+71s^{6}+19s^{7})t^{4}$}\tabularnewline
\hline 
\end{tabular}{\small\par}
\par\end{raggedright}
\caption{\label{tb-udripk}Joint distribution of $\protect\udr$ and $\protect\ipk$
over $\mathfrak{S}_{n}$}
\end{table}

The first several polynomials $P_{n}^{(\udr,\ipk)}(s,t)$ are displayed
in Table \ref{tb-udripk}. The coefficients of $s^{k}t$
in $P_{n}^{(\udr,\ipk)}(s,t)$ are binomial coefficients, just like
the coefficients of $st^{k}$ in $P_{n}^{(\pk,\ides)}(s,t)$ from
Table \ref{tb-desipk}.
\begin{prop}
\label{p-udripk0}For any $n\geq1$ and $k\geq0$, the number of permutations
$\pi\in\mathfrak{S}_{n}$ with $\udr(\pi)=k$ and $\ipk(\pi)=0$ is
equal to ${n-1 \choose k-1}$.
\end{prop}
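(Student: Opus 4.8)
The plan is to reduce the statement to a count of descent compositions and then carry out a direct binomial enumeration, exploiting that $\udr$ depends only on the descent composition.

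First I would invoke the structural input. Since $\udr$ is a descent statistic, $\udr(\pi)=\udr(L)$ whenever $\Comp(\pi)=L$. By the result of Troyka and Zhuang quoted just after Proposition~\ref{p-pkipk0}---that for each composition $L\vDash n$ there is exactly one $\pi\in\mathfrak{S}_n$ with $\Comp(\pi)=L$ and $\ipk(\pi)=0$---the permutations counted in Proposition~\ref{p-udripk0} are in bijection with the compositions $L\vDash n$ satisfying $\udr(L)=k$. Thus it suffices to show that exactly $\binom{n-1}{k-1}$ compositions of $n$ have $\udr$ equal to $k$.

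Next I would translate $\udr(L)$ into the language of descent sets. Identify $L$ with the ascent/descent sign sequence $\epsilon_1,\dots,\epsilon_{n-1}$ of any permutation of descent composition $L$, where $\epsilon_i$ records whether $i\in\Des(L)$. From the definitions, $\br(\pi)$ equals one plus the number of adjacent sign changes among $\epsilon_1,\dots,\epsilon_{n-1}$, and $\udr(\pi)=\br(\pi)+[\pi(1)>\pi(2)]$, where the final bracket is $1$ exactly when $\epsilon_1$ is a descent. The cleanest way to package both contributions uniformly is to prepend a virtual ascent $\epsilon_0$: the sign change between $\epsilon_0$ and $\epsilon_1$ accounts for precisely the $[\pi(1)>\pi(2)]$ term, so that $\udr(L)=1+\#\{\,1\le i\le n-1:\epsilon_{i-1}\ne\epsilon_i\,\}$ for all $n\ge 1$. (One can sanity-check this on the running example $\pi=624731598$, whose extended sign sequence ${+}{-}{+}{+}{-}{-}{+}{+}{-}$ has five sign changes, giving $\udr=6$.)

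Finally I would count. As $L$ ranges over compositions of $n$, the string $\epsilon_0\epsilon_1\cdots\epsilon_{n-1}$ ranges over all binary strings of length $n$ with $\epsilon_0$ fixed to be an ascent, and each such string is determined by its set of sign-change positions among the $n-1$ internal gaps. Requiring $\udr(L)=k$---that is, exactly $k-1$ sign changes---therefore yields $\binom{n-1}{k-1}$ compositions, which is the desired count. The only real subtlety is the middle step: correctly expressing the permutation statistic $\udr$, with its asymmetric treatment of the first entry, as a clean function of the descent composition. Once the prepended virtual ascent disposes of this boundary term uniformly, the enumeration is immediate.
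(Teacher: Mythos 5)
Your proof is correct, and its decisive first step is identical to the paper's: both reduce the problem, via the Troyka--Zhuang result that each composition $L\vDash n$ is the descent composition of exactly one $\pi\in\mathfrak{S}_{n}$ with $\ipk(\pi)=0$, to counting compositions $K\vDash n$ with $\udr(K)=k$. You differ only in how that enumeration is finished. The paper introduces the up-down composition $\udComp(\pi)$ and constructs an explicit bijection between compositions of $n$ with $k$ parts and compositions $K$ with $\udr(K)=k$, then gets $\binom{n-1}{k-1}$ from the standard count of compositions with $k$ parts (phrased via Proposition \ref{p-pkides0}); you instead encode a composition by its ascent--descent sign sequence, note that $\udr(\pi)=\br(\pi)+[\pi(1)>\pi(2)]$, absorb the boundary term by prepending a virtual ascent $\epsilon_{0}$ so that $\udr(L)=1+\#\{1\le i\le n-1:\epsilon_{i-1}\ne\epsilon_{i}\}$, and count binary strings with exactly $k-1$ sign changes directly. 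Your sign-change identity checks out (including on the paper's example $624731598$), the edge cases $n=1$ and $k=0$ come out right under the convention $\binom{n-1}{-1}=0$, and your finish is arguably more self-contained, since it never needs the up-down composition or Proposition \ref{p-pkides0}. What the paper's packaging buys is reuse: the up-down composition and the same composition-level bijection are deployed again immediately in the proof of Proposition \ref{p-bripk0} for biruns, where a $1$-to-$2$ variant of the map is required; with your sign-change encoding that birun refinement would have to be reworked separately.
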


Unlike the analogous results given earlier, Proposition \ref{p-udripk0}
was not proven in \cite{Troyka2022}, so we shall supply a proof here.
\begin{proof}
Let us define the \textit{up-down composition} of a permutation $\pi$, 
denoted $\udComp(\pi)$, in the following way: if $u_{1},u_{2},\dots,u_{k}$
are the lengths of the up-down runs of $\pi$ in the order that they
appear, then $\udComp(\pi)\coloneqq(u_{1},u_{2}-1,\dots,u_{k}-1)$.
For example, if $\pi=312872569$, then $\udComp(\pi)=(1,1,2,2,3)$.
Note that if $\pi\in\mathfrak{S}_{n}$, then $\udComp(\pi)$ is a
composition of $n$. It is not hard to verify that the descent composition
determines the up-down composition and vice versa.

According to \cite[Theorem 5]{Troyka2022}, for any composition $L\vDash n$,
there exists exactly one permutation $\pi\in\mathfrak{S}_{n}$ with
descent composition $L$ such that $\ipk(\pi)=0$. In light of this
fact and Proposition \ref{p-pkides0}, it suffices to construct a
bijection between compositions $L\vDash n$ with $k$ parts and compositions
$K\vDash n$ satisfying $\udr(K)=k$. This bijection is obtained by
mapping $L$ to the descent composition $K$ of any permutation with
up-down composition $L$. For example, the composition $L=(1,1,2,2,3)$
is mapped to $K=(1,3,1,4)$, which is the descent composition of the
permutation $\pi$ from above.
\end{proof}
Setting $y=0$ in Theorem \ref{t-udripkides} yields the analogous
result for $(\udr,\ides)$.

\begin{thm}
\label{t-udrides}We have \leqnomode 
\begin{multline*}
\tag{{a}}\qquad\frac{1}{(1-s)(1-t)}+\frac{1}{2(1-s)^{2}}\sum_{n=1}^{\infty}\frac{(1+s^{2})^{n}}{(1-s^{2})^{n-1}(1-t)^{n+1}}P_{n}^{(\udr,\ides)}\left(\frac{2s}{1+s^{2}},t\right)x^{n}\\
=\sum_{i,j=0}^{\infty}\left(\frac{1+x}{1-x}\right)^{ij}\left(1+\frac{s}{(1-x)^{j}}\right)s^{2i}t^{j}
\end{multline*}
and, for all $n\geq1$, we have 
\begin{multline*}
\tag{{b}} \frac{(1+s^{2})^{n}}{2(1-s)^{2}(1-s^{2})^{n-1}(1-t)^{n+1}}P_{n}^{(\udr,\ides)}\left(\frac{2s}{1+s^{2}},t\right)x^{n}\\
=\frac{1}{n!}\sum_{k=0}^n 2^k d(n,k)\frac{A_{k}(s^{2})A_{k}(t)}{(1-s^{2})^{k+1}(1-t)^{k+1}}
+\frac{s}{n!}\sum_{k,m=0}^n f(n,k,m)\frac{B_{k}(s^{2})A_{m}(t)}{(1-s^{2})^{k+1}(1-t)^{m+1}}.
\end{multline*}
\end{thm}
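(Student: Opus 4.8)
The plan is to obtain Theorem~\ref{t-udrides} by specializing Theorem~\ref{t-udripkides} at $y=0$, in direct parallel to the way parts (a)--(b) of Theorem~\ref{t-udripk} arise from setting $y=1$. The key observation is that in Theorem~\ref{t-udripkides} the second argument of $P_{n}^{(\udr,\ipk,\ides)}$, namely the $\ipk$-tracking variable $\tfrac{(1+y)^{2}t}{(y+t)(1+yt)}$, tends to $1$ as $y\to 0$ (for generic $t$ no denominator vanishes), while the third argument $\tfrac{y+t}{1+yt}$ tends to $t$. Since $P_{n}^{(\udr,\ides)}(s,t)=P_{n}^{(\udr,\ipk,\ides)}(s,1,t)$ by definition, the transformed polynomial on the left-hand side of Theorem~\ref{t-udripkides} collapses at $y=0$ to $P_{n}^{(\udr,\ides)}\!\left(\tfrac{2s}{1+s^{2}},t\right)$, which is exactly the object in Theorem~\ref{t-udrides}. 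Because the identity of Theorem~\ref{t-udripkides} holds as an equality of formal power series in $x$ with coefficients rational in $s,t,y$, I would simply substitute $y=0$ into both the generating-function form (a) and the coefficient-extraction form (b).

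For part (a), setting $y=0$ in the left-hand prefactor sends $\left(\tfrac{1+yt}{1-t}\right)^{n+1}$ to $(1-t)^{-(n+1)}$ and $\tfrac{1}{1+y}$ to $1$, leaving the prefactor $\tfrac{(1+s^{2})^{n}}{2(1-s)^{2}(1-s^{2})^{n-1}(1-t)^{n+1}}$ shown in Theorem~\ref{t-udrides}~(a). On the right-hand side the factors $(1+yx)$ and $(1-yx)^{-1}$ both reduce to $1$, giving $\left(\tfrac{1+x}{1-x}\right)^{ij}\!\left(1+s(1-x)^{-j}\right)s^{2i}t^{j}$, which is the stated sum. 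This step is routine bookkeeping.

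For part (b), the decisive simplification is that at $y=0$ every factor $1-(-y)^{\lambda_{k}}$ in the products $\prod_{k=1}^{l(\lambda)}(1-(-y)^{\lambda_{k}})$ becomes $1$, so both products disappear. The first sum then reduces to $\sum_{\lambda\vdash n,\,\mathrm{odd}}\tfrac{2^{l(\lambda)}}{z_{\lambda}}\tfrac{A_{l(\lambda)}(s^{2})A_{l(\lambda)}(t)}{(1-s^{2})^{l(\lambda)+1}(1-t)^{l(\lambda)+1}}$; collecting terms with $l(\lambda)=k$ and invoking Lemma~\ref{l-cdef}~(b) (noting that for an odd partition $o(\lambda)=l(\lambda)$, so all exponents agree) turns this into $\tfrac{1}{n!}\sum_{k=0}^{n}2^{k}d(n,k)\tfrac{A_{k}(s^{2})A_{k}(t)}{(1-s^{2})^{k+1}(1-t)^{k+1}}$. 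Likewise the second sum reduces to $s\sum_{\lambda\vdash n}\tfrac{1}{z_{\lambda}}\tfrac{B_{o(\lambda)}(s^{2})A_{l(\lambda)}(t)}{(1-s^{2})^{o(\lambda)+1}(1-t)^{l(\lambda)+1}}$; collecting by the pair $(o(\lambda),l(\lambda))=(k,m)$ and applying Lemma~\ref{l-cdef}~(d) yields $\tfrac{s}{n!}\sum_{k,m=0}^{n}f(n,k,m)\tfrac{B_{k}(s^{2})A_{m}(t)}{(1-s^{2})^{k+1}(1-t)^{m+1}}$, and summing the two pieces gives Theorem~\ref{t-udrides}~(b). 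The only real care needed, and the step I would verify most carefully, is matching the correct partition statistic to each sum (the number of parts $l(\lambda)$ for the odd-partition sum, versus the pair $(o(\lambda),l(\lambda))$ for the general sum), since this dictates whether Lemma~\ref{l-cdef}~(b) or~(d) is the appropriate tool; everything else is direct substitution.
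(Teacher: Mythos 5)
Your proposal is correct and is exactly the paper's own route: the authors obtain Theorem \ref{t-udrides} by setting $y=0$ in Theorem \ref{t-udripkides}, with the products $\prod_{k=1}^{l(\lambda)}(1-(-y)^{\lambda_k})$ collapsing to $1$ and the partition sums converted via Lemma \ref{l-cdef} (b) and (d), just as you describe. Your bookkeeping of the specialized arguments $\bigl(\tfrac{(1+y)^2t}{(y+t)(1+yt)}\to 1$, $\tfrac{y+t}{1+yt}\to t\bigr)$ and the matching of $l(\lambda)$ versus $(o(\lambda),l(\lambda))$ to $d(n,k)$ versus $f(n,k,m)$ are all accurate.
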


Table \ref{tb-udrides} contains the first several polynomials $P_{n}^{(\udr,\ides)}(s,t)$.

\renewcommand{\arraystretch}{1.3}
\begin{table}[H]
\centering
\noindent \begin{raggedright}
{\small{}}%
\begin{tabular}{|>{\centering}m{0.1in}|>{\raggedright}p{6in}|}
\hline 
\multicolumn{1}{|c|}{{\small{}$n$}} & {\small{}$P_{n}^{(\udr,\ides)}(s,t)$}\tabularnewline
\hline 
{\small{}$1$} & {\small{}$st$}\tabularnewline
\hline 
{\small{}$2$} & {\small{}$st+s^{2}t^{2}$}\tabularnewline
\hline 
{\small{}$3$} & {\small{}$st+(2s^{2}+2s^{3})t^{2}+s^{2}t^{3}$}\tabularnewline
\hline 
{\small{}$4$} & {\small{}$st+(3s^{2}+7s^{3}+s^{4})t^{2}+(3s^{2}+4s^{3}+4s^{4})t^{3}+s^{2}t^{4}$}\tabularnewline
\hline 
{\small{}$5$} & {\small{}$st+(4s^{2}+16s^{3}+4s^{4}+2s^{5})t^{2}+(6s^{2}+21s^{3}+27s^{4}+12s^{5})t^{3}+(4s^{2}+6s^{3}+14s^{4}+2s^{5})t^{4}+s^{2}t^{5}$}\tabularnewline
\hline 
{\small{}$6$} & {\small{}$st+(5s^{2}+30s^{3}+10s^{4}+11s^{5}+s^{6})t^{2}+(10s^{2}+67s^{3}+101s^{4}+104s^{5}+20s^{6})t^{3}+(10s^{2}+43s^{3}+125s^{4}+88s^{5}+36s^{6})t^{4}+(5s^{2}+8s^{3}+32s^{4}+8s^{5}+4s^{6})t^{5}+s^{2}t^{6}$}\tabularnewline
\hline 
\end{tabular}{\small\par}
\par\end{raggedright}
\caption{\label{tb-udrides}Joint distribution of $\protect\udr$ and $\protect\ides$
over $\mathfrak{S}_{n}$}
\end{table}

\subsection{Up-down runs, inverse left peaks, and inverse descents}

Define 
\begin{align*}
P_{n}^{(\udr,\ilpk,\ides)}(s,y,t) & \coloneqq\sum_{\pi\in\mathfrak{S}_{n}}s^{\udr(\pi)}y^{\ilpk(\pi)}t^{\ides(\pi)}\quad\text{and}\\
P_{n}^{(\udr,\ilpk)}(s,t) & \coloneqq P_{n}^{(\udr,\ilpk,\ides)}(s,t,1)=\sum_{\pi\in\mathfrak{S}_{n}}s^{\udr(\pi)}t^{\ilpk(\pi)}.
\end{align*}

\begin{thm}
\label{t-udrilpkides}We have 
\begin{multline*}
\frac{1}{(1-s)(1-t)}+\frac{1}{2(1-s)^{2}}\sum_{n=1}^{\infty}\frac{(1+s^{2})^{n}(1+yt)^{n}P_{n}^{(\udr,\ilpk,\ides)}\left(\frac{2s}{1+s^{2}},\frac{(1+y)^{2}t}{(y+t)(1+yt)},\frac{y+t}{1+yt}\right)}{(1-s^{2})^{n-1}(1-t)^{n+1}}x^{n}\\
=\sum_{i,j=0}^{\infty}\left(\frac{1+x}{1-x}\right)^{i(j+1)}\left(\frac{1+yx}{1-yx}\right)^{ij}\left(1+s\frac{(1+yx)^{j}}{(1-x)^{j+1}}\right)s^{2i}t^{j}.
\end{multline*}
\end{thm}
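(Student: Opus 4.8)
The plan is to follow the three-step template of the proof of Theorem~\ref{t-udr}, evaluating a single scalar product in two ways. The right object here is
\[
\left\langle \frac{1+sH(x)}{1-s^{2}E(x)H(x)},\ \frac{H}{1-tE(y)H}\right\rangle,
\]
whose first factor encodes the non-inverse statistic $\udr$ through Lemma~\ref{l-ribexp}(d), and whose second factor encodes $\lpk$ and $\des$ through Lemma~\ref{l-ribexp}(c) with $x$ specialized to $1$.

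For the first evaluation, I would expand both factors using Lemma~\ref{l-ribexp}(c)--(d) and apply Foulkes's theorem (Theorem~\ref{t-foulkes}). Because $\langle r_{L},r_{M}\rangle$ counts permutations $\pi$ with $\Comp(\pi)=L$ and $\Comp(\pi^{-1})=M$, the double sum over $L,M\vDash n$ collapses into a sum over $\pi\in\mathfrak{S}_{n}$ in which $\udr(L)=\udr(\pi)$, while $\lpk(M)$ and $\des(M)$ become $\ilpk(\pi)$ and $\ides(\pi)$. The prefactors $\frac{1}{2(1-s)^{2}}\frac{(1+s^{2})^{n}}{(1-s^{2})^{n-1}}$ and $\frac{(1+yt)^{n}}{(1-t)^{n+1}}$ coming from (d) and (c) then reproduce the left-hand side of the theorem, with $P_{n}^{(\udr,\ilpk,\ides)}$ evaluated at the stated arguments.

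For the second evaluation, I would expand each factor geometrically,
\[
\sum_{i,j=0}^{\infty}\left\langle (1+sH(x))\,E(x)^{i}H(x)^{i},\ E(y)^{j}H^{j+1}\right\rangle s^{2i}t^{j},
\]
and absorb the surplus $H$ on the right exactly as in the proof of Theorem~\ref{t-lpkdes}: Lemma~\ref{l-sppleth} with $\mathsf{m}=1$ gives $\langle f,E(y)^{j}H^{j+1}\rangle=\langle f[X+1],E(y)^{j}H^{j}\rangle$. Next, Lemma~\ref{l-X+1} evaluates the plethysms as $(E(x)^{i}H(x)^{i})[X+1]=\frac{(1+x)^{i}}{(1-x)^{i}}E(x)^{i}H(x)^{i}$ and $(E(x)^{i}H(x)^{i+1})[X+1]=\frac{(1+x)^{i}}{(1-x)^{i+1}}E(x)^{i}H(x)^{i+1}$; Lemma~\ref{l-pkdeshom} splits the resulting brackets into powers; and Lemma~\ref{l-scalprodHE} (with $\alpha=-y$) supplies the atoms $\langle H(x),E(y)^{j}H^{j}\rangle=(1+yx)^{j}/(1-x)^{j}$ and $\langle E(x),E(y)^{j}H^{j}\rangle=(1+x)^{j}/(1-yx)^{j}$. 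The summand from the constant term of $1+sH(x)$ then simplifies to $\left(\frac{1+x}{1-x}\right)^{i(j+1)}\left(\frac{1+yx}{1-yx}\right)^{ij}$, while the summand from $sH(x)$ equals $s\,\frac{(1+yx)^{j}}{(1-x)^{j+1}}$ times this same quantity; summing over $i,j$ and equating the two evaluations yields the claimed identity.

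I expect the only difficulty to be bookkeeping rather than conceptual. The numerator $1+sH(x)$ contributes two summands that must be carried in parallel through the $[X+1]$ plethysm and the bracket evaluations, while the single extra $H$ from Lemma~\ref{l-ribexp}(c) is precisely what forces the use of Lemma~\ref{l-sppleth}; this asymmetry is the hybrid of the $(\udr,\iudr)$ argument and the left-peak arguments. Keeping the sign convention $\alpha=-y$ in Lemma~\ref{l-scalprodHE} consistent across both summands is the main spot where an error could creep in, but no genuinely new idea beyond Theorems~\ref{t-udr} and~\ref{t-lpkdes} is required.
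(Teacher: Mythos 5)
Your proposal is correct and is essentially the proof the paper intends: the paper omits the argument for Theorem \ref{t-udrilpkides}, noting only that it follows the template of Theorem \ref{t-udr}, and your computation of $\left\langle \frac{1+sH(x)}{1-s^{2}E(x)H(x)},\frac{H}{1-tE(y)H}\right\rangle$ in two ways---Lemma \ref{l-ribexp} (c)--(d) plus Foulkes's theorem on one side, and the geometric expansion with Lemmas \ref{l-sppleth}, \ref{l-X+1}, \ref{l-pkdeshom}, and \ref{l-scalprodHE} (at $\alpha=-y$) on the other---is exactly that template, hybridized with the extra-$H$ device from Theorem \ref{t-lpkdes}. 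All the bookkeeping checks out, including the exponent arithmetic $\frac{(1+x)^{i}}{(1-x)^{i+1}}\cdot\frac{(1+x)^{ij}}{(1-yx)^{ij}}\cdot\frac{(1+yx)^{(i+1)j}}{(1-x)^{(i+1)j}}=\left(\frac{1+x}{1-x}\right)^{i(j+1)}\left(\frac{1+yx}{1-yx}\right)^{ij}\frac{(1+yx)^{j}}{(1-x)^{j+1}}$ for the $sH(x)$ summand.
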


Setting $y=1$ in Theorem \ref{t-udrilpkides} yields part (a) of
the following theorem. Part (b) is obtained by computing the scalar
product 
\[
\left\langle \frac{1+sH(x)}{1-sE(x)H(x)},\frac{H}{1-tEH}\right\rangle 
\]
using Lemmas \ref{l-ribexp} (c)\textendash (d) and \ref{l-psexp}
(b)\textendash (c). Part (c) is obtained from (b) in a way similar
to the proof of Theorem \ref{t-lpkilpkprod}, making use of Equation
(\ref{e-Blpk}).
\begin{thm}
We have \leqnomode
\begin{multline*}
\tag{{a}}\frac{1}{(1-s)(1-t)}+\frac{1}{2(1-s)^{2}}\sum_{n=1}^{\infty}\frac{(1+s^{2})^{n}(1+t)^{n}}{(1-s^{2})^{n-1}(1-t)^{n+1}}P_{n}^{(\udr,\ilpk)}\left(\frac{2s}{1+s^{2}},\frac{4t}{(1+t)^{2}}\right)x^{n}\\
=\sum_{i,j=0}^{\infty}\left(\frac{1+x}{1-x}\right)^{i(2j+1)}\left(1+s\frac{(1+x)^{j}}{(1-x)^{j+1}}\right)s^{2i}t^{j}
\end{multline*}
and, for all $n\geq1$, we have 
\begin{multline*}
\tag{{b}}\frac{(1+s^{2})^{n}(1+t)^{n}}{2(1-s)^{2}(1-s^{2})^{n-1}(1-t)^{n+1}}
  P_{n}^{(\udr,\ilpk)}\left(\frac{2s}{1+s^{2}},\frac{4t}{(1+t)^{2}}\right)\\
=\frac{1}{n!}\sum_{k=0}^n 2^k d(n,k)
 \frac{A_{k}(s^{2})B_{k}(t)}{(1-s^{2})^{k+1}(1-t)^{k+1}}
  +\frac{s}{n!}\sum_{k=0}^n e(n,k) \frac{B_{k}(s^{2})B_{k}(t)}{(1-s^{2})^{k+1}(1-t)^{k+1}}
\end{multline*}
and 
\begin{align*}
\tag{{c}} \qquad\qquad(1+s)^{2}(1+s^{2})^{n}P_{n}^{(\udr,\ilpk)}&\left(\frac{2s}{1+s^{2}},t\right)\\
  & \mkern -30mu =\frac{1}{n!}\sum_{k=0}^n
  2^{k+1} d(n,k) \left((1-s^{2})(1-t)^{1/2}\right)^{n-k}A_{k}(s^{2})P_{k}^{\lpk}(t)\\
 & \quad\,\, +\frac{2s}{n!}\sum_{k=0}^n e(n,k)\left((1-s^{2})(1-t)^{1/2}\right)^{n-k}B_{k}(s^{2})P_{k}^{\lpk}(t).
\end{align*}
\end{thm}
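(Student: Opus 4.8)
The plan is to prove the three parts in order, each by specializing or transforming an identity established earlier. For part (a), I would simply set $y=1$ in Theorem \ref{t-udrilpkides}. On the left-hand side the $\ides$-argument $\frac{y+t}{1+yt}$ collapses to $1$, so the $\ides$-grading disappears and $P_n^{(\udr,\ilpk,\ides)}$ becomes $P_n^{(\udr,\ilpk)}$; simultaneously the $\ilpk$-argument $\frac{(1+y)^2t}{(y+t)(1+yt)}$ becomes $\frac{4t}{(1+t)^2}$ and $(1+yt)^n$ becomes $(1+t)^n$. On the right-hand side, $\frac{1+yx}{1-yx}$ becomes $\frac{1+x}{1-x}$, so the two factors $\left(\frac{1+x}{1-x}\right)^{i(j+1)}$ and $\left(\frac{1+yx}{1-yx}\right)^{ij}$ merge into $\left(\frac{1+x}{1-x}\right)^{i(2j+1)}$ and $\frac{(1+yx)^j}{(1-x)^{j+1}}$ becomes $\frac{(1+x)^j}{(1-x)^{j+1}}$, which is exactly the claimed identity.

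For part (b), I would evaluate the scalar product $\left\langle \frac{1+sH(x)}{1-s^2E(x)H(x)},\frac{H}{1-tEH}\right\rangle$ in two ways. First, expanding the left factor by the $\udr$-specialization of Lemma \ref{l-ribexp} (d) (with $t$ replaced by $s$) and the right factor by the $\ilpk$-specialization of Lemma \ref{l-ribexp} (c) (at $y=1$, $x=1$), and then invoking Foulkes's theorem (Theorem \ref{t-foulkes}) to replace $\sum_{L,M}\langle r_L,r_M\rangle$ by a sum over $\pi\in\mathfrak{S}_n$, produces the left-hand side of (b), i.e.\ $\frac{(1+s^2)^n(1+t)^n}{2(1-s)^2(1-s^2)^{n-1}(1-t)^{n+1}}P_n^{(\udr,\ilpk)}\!\left(\frac{2s}{1+s^2},\frac{4t}{(1+t)^2}\right)$ as the coefficient of $x^n$. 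Second, expanding the left factor by Lemma \ref{l-psexp} (c) and the right factor by Lemma \ref{l-psexp} (b), then using $\langle p_\lambda,p_\mu\rangle=z_\lambda\delta_{\lambda\mu}$, collapses the double sum to a single sum over $\lambda$. The ``odd $\lambda$'' piece of the $\udr$-expansion contributes $A_{l(\lambda)}(s^2)$ paired with $B_{o(\lambda)}(t)$; since every part of $\lambda$ is odd here, $o(\lambda)=l(\lambda)$, so this piece yields $A_k(s^2)B_k(t)$ and the constrained $z_\lambda$-sum is evaluated by Lemma \ref{l-cdef} (b) as $d(n,k)/n!$. The second piece ranges over all $\lambda$ and contributes $B_{o(\lambda)}(s^2)B_{o(\lambda)}(t)$, whose $z_\lambda$-sum is evaluated by Lemma \ref{l-cdef} (c) as $e(n,k)/n!$. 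Equating the two evaluations and extracting the coefficient of $x^n$ gives (b).

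For part (c), I would transform (b) by inverting the quadratic substitution in the $\ilpk$-variable only, following the proof of Theorem \ref{t-lpkilpkprod}. Using \eqref{e-Blpk} to write $B_k(t)=(1+t)^kP_k^{\lpk}\!\left(\frac{4t}{(1+t)^2}\right)$, I would replace every occurrence of $B_k(t)$ in (b) by $P_k^{\lpk}$ of the peak-transformed variable, rename $t$ to an auxiliary $v$, and then set the $\ilpk$-argument equal to the free variable $t$ of part (c) (so that $\frac{4v}{(1+v)^2}=t$). The $\udr$-factors $A_k(s^2)$ and $B_k(s^2)$ and the first argument $\frac{2s}{1+s^2}$ of $P_n^{(\udr,\ilpk)}$ are left in terms of $s$, untouched. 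The powers of $(1-t)$ then simplify via $1-\frac{4v}{(1+v)^2}=\frac{(1-v)^2}{(1+v)^2}$, i.e.\ $(1-t)^{1/2}=\frac{1-v}{1+v}$, which accounts for the half-integer exponents in the final formula, and the surviving powers of $(1+s)$ and $(1+s^2)$ assemble into the prefactor $(1+s)^2(1+s^2)^n$ on the left.

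The main obstacle I anticipate is the bookkeeping in part (b): the $\udr$-generating function has a two-term power-sum expansion (an $A(s^2)$-term supported on odd $\lambda$ and a $B(s^2)$-term over all $\lambda$), and one must correctly pair each term against the single $B_{o(\mu)}(t)$-term of the $\ilpk$-factor while keeping careful track of when $l(\lambda)$ versus $o(\lambda)$ is the relevant index. This is precisely what forces $d(n,k)$ (from the odd-$\lambda$, $l=o$ piece) in the first sum and $e(n,k)$ (from the all-$\lambda$ piece graded by $o$) in the second; getting these two classifications right, and checking that the prefactors $2^k$ and the loose factor of $s$ land on the correct sums, is where the care is needed.
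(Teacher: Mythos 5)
Your proposal is correct and follows essentially the same route as the paper: part (a) by setting $y=1$ in Theorem \ref{t-udrilpkides}; part (b) by computing $\left\langle \frac{1+sH(x)}{1-s^{2}E(x)H(x)},\frac{H}{1-tEH}\right\rangle$ once via Lemma \ref{l-ribexp} (c)--(d) together with Foulkes's theorem and once via Lemma \ref{l-psexp} (b)--(c), with the constrained $z_{\lambda}$-sums evaluated by Lemma \ref{l-cdef} (b)--(c) exactly as you describe (the odd-$\lambda$ piece with $o(\lambda)=l(\lambda)$ giving the $2^{k}d(n,k)A_{k}(s^{2})B_{k}(t)$ sum, the all-$\lambda$ piece graded by $o(\lambda)$ giving the $s\,e(n,k)B_{k}(s^{2})B_{k}(t)$ sum); and part (c) from (b) via the substitution $B_{k}(v)=(1+v)^{k}P_{k}^{\lpk}\left(\frac{4v}{(1+v)^{2}}\right)$ of Equation \eqref{e-Blpk} and $(1-t)^{1/2}=\frac{1-v}{1+v}$, as in the proof of Theorem \ref{t-lpkilpkprod}. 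One small point in your favor: you write the denominator as $1-s^{2}E(x)H(x)$, which is the function that actually matches Lemma \ref{l-ribexp} (d) and the stated power-sum expansion, whereas the paper's prose (and the displayed left-hand side of Lemma \ref{l-psexp} (c)) print $1-sE(x)H(x)$, an apparent typo that you have silently corrected.
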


See Table \ref{tb-udrilpk} for the first several polynomials $P_{n}^{(\udr,\ilpk)}(s,t)$.

\renewcommand{\arraystretch}{1.3}
\begin{table}[H]
\centering
\noindent \begin{raggedright}
{\small{}}%
\begin{tabular}{|>{\centering}m{0.1in}|>{\raggedright}p{6in}|}
\hline 
\multicolumn{1}{|c|}{{\small{}$n$}} & {\small{}$P_{n}^{(\udr,\ilpk)}(s,t)$}\tabularnewline
\hline 
{\small{}$1$} & {\small{}$s$}\tabularnewline
\hline 
{\small{}$2$} & {\small{}$s+s^{2}t$}\tabularnewline
\hline 
{\small{}$3$} & {\small{}$s+(3s^{2}+2s^{3})t$}\tabularnewline
\hline 
{\small{}$4$} & {\small{}$s+(6s^{2}+9s^{3}+3s^{4})t+(s^{2}+2s^{3}+2s^{4})t^{2}$}\tabularnewline
\hline 
{\small{}$5$} & {\small{}$s+(10s^{2}+25s^{3}+17s^{4}+6s^{5})t+(5s^{2}+18s^{3}+28s^{4}+10s^{5})t^{2}$}\tabularnewline
\hline 
{\small{}$6$} & {\small{}$s+(15s^{2}+55s^{3}+57s^{4}+43s^{5}+9s^{6})t+(15s^{2}+85s^{3}+187s^{4}+148s^{5}+44s^{6})t^{2}+(s^{2}+8s^{3}+24s^{4}+20s^{5}+8s^{6})t^{3}$}\tabularnewline
\hline 
{\small{}$7$} & {\small{}$s+(21s^{2}+105s^{3}+147s^{4}+177s^{5}+75s^{6}+18s^{7})t+(35s^{2}+289s^{3}+847s^{4}+1104s^{5}+672s^{6}+164s^{7})t^{2}+(7s^{2}+86s^{3}+350s^{4}+486s^{5}+366s^{6}+90s^{7})t^{3}$}\tabularnewline
\hline 
\end{tabular}{\small\par}
\par\end{raggedright}
\caption{\label{tb-udrilpk}Joint distribution of $\protect\udr$ and $\protect\ilpk$
over $\mathfrak{S}_{n}$}
\end{table}

\subsection{Biruns}

Recall that a birun is a maximal monotone consecutive subsequence,
whereas an up-down run is either a birun or an initial descent. In
\cite{Gessel2020}, the authors gave the formula 
\begin{align*}
\frac{2+tH(x)+tE(x)}{1-t^{2}E(x)H(x)} & =\frac{2}{1-t}+\frac{2t}{(1-t)^{2}}xh_{1}+\frac{(1+t)^{3}}{2(1-t)}\sum_{n=2}^{\infty}\sum_{L\vDash n}\frac{(1+t^{2})^{n-1}}{(1-t^{2})^{n}}\left(\frac{2t}{1+t^{2}}\right)^{\br(L)}x^{n}r_{L}
\end{align*}
(cf.\ Lemma \ref{l-ribexp}), which allows us to produce
formulas for (mixed) two-sided distributions involving the number
of biruns. For example, computing the scalar product 
\begin{equation}
\left\langle \frac{2+sH(x)+sE(x)}{1-s^{2}E(x)H(x)},\frac{2+tH+tE}{1-t^{2}EH}\right\rangle ,\label{e-spbr}
\end{equation}
would yield a formula for the two-sided distribution of $\br$, but
we chose not to derive this formula as it would be complicated to
write down. Looking back at the formula in Theorem~\ref{t-udr} for
the two-sided distribution of $\udr$, we see that the right-hand
side is a summation whose summands are each a sum involving four terms,
which is because the numerators in the scalar product
\[
\left\langle \frac{1+sH(x)}{1-s^{2}E(x)H(x)},\frac{1+tH}{1-t^{2}EH}\right\rangle 
\]
that we sought to compute has two terms each. The numerators in the
scalar product (\ref{e-spbr}) contain three terms each, which will
lead to nine terms in the formula for $(\br,\ibr)$ as opposed to
four.

On the other hand, formulas for the polynomials
\[
P_{n}^{(\br,\ipk)}(s,t)\coloneqq\sum_{\pi\in\mathfrak{S}_{n}}s^{\br(\pi)}t^{\ipk(\pi)+1}\quad\text{and}\quad P_{n}^{(\br,\ides)}(s,t)\coloneqq\sum_{\pi\in\mathfrak{S}_{n}}s^{\br(\pi)}t^{\ides(\pi)+1}
\]
have fewer such terms and so we present them below. 
\begin{thm}
\label{t-brides}We have \leqnomode
\begin{multline*}
\tag{{a}}\!\!\frac{1}{(1-s)(1-t)}+\frac{2stx}{(1-s)^{2}(1-t)^{2}}+\frac{(1+s)^{3}}{8(1-s)}\sum_{n=2}^{\infty}\frac{(1+s^{2})^{n-1}(1+t)^{n+1}P_{n}^{(\br,\ipk)}\left(\frac{2s}{1+s^{2}},\frac{4t}{(1+t)^{2}}\right)}{(1-s^{2})^{n}(1-t)^{n+1}}x^{n}\\
=\sum_{i,j=0}^{\infty}\left(\frac{1+x}{1-x}\right)^{2ij}\left(1+s\left(\frac{1+x}{1-x}\right)^{j}\right)s^{2i}t^{j}
\end{multline*}
and
\begin{multline*}
\tag{{b}}\frac{1}{(1-s)(1-t)}+\frac{stx}{(1-s)^{2}(1-t)^{2}}+\frac{(1+s)^{3}}{4(1-s)}\sum_{n=2}^{\infty}\frac{(1+s^{2})^{n-1}P_{n}^{(\br,\ides)}\left(\frac{2s}{1+s^{2}},t\right)}{(1-s^{2})^{n}(1-t)^{n+1}}x^{n}\\
=\frac{1}{2}\sum_{i,j=0}^{\infty}\left(\frac{1+x}{1-x}\right)^{ij}\left(2+\frac{s}{(1-x)^{j}}+s(1+x)^{j}\right)s^{2i}t^{j}.
\end{multline*}
\end{thm}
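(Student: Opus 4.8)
The plan is to follow the template of the proofs of Theorems~\ref{t-udr}--\ref{t-udrides}: for each part, evaluate a single scalar product in two different ways and equate the results. For part~(a) the relevant scalar product is
\[
\left\langle \frac{2+sH(x)+sE(x)}{1-s^{2}E(x)H(x)},\ \frac{1}{1-tEH}\right\rangle,
\]
since $\frac{1}{1-tEH}$ is the ribbon generating function that tracks $\ipk$; for part~(b) it is the same left-hand argument paired instead with $\frac{1}{1-tH}$, which tracks $\ides$. The left-hand argument is the displayed ribbon generating function for biruns (with $t$ replaced by $s$).

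The first evaluation is combinatorial. Expanding the left argument by the displayed birun generating function and the right argument by the relevant ribbon expansion (Lemma~\ref{l-ribexp}(b) at $y=1$ for $\ipk$, and the descent expansion $\frac{1}{1-tH}=\frac{1}{1-t}+\sum_{n}\sum_{L}\frac{t^{\des(L)+1}}{(1-t)^{n+1}}r_{L}$ for $\ides$), I would apply Foulkes's theorem (Theorem~\ref{t-foulkes}) to replace each $\langle r_{L},r_{M}\rangle$ by the number of $\pi$ with $\Comp(\pi)=L$ and $\Comp(\pi^{-1})=M$. This collapses the double sum over $(L,M)$ into a single sum over $\pi\in\mathfrak{S}_{n}$ whose summand records $\br(\pi)$ on one side and $\ipk(\pi)$ (resp.\ $\ides(\pi)$) on the other, producing the left-hand sides of (a) and (b) with the polynomials $P_{n}^{(\br,\ipk)}$ (resp.\ $P_{n}^{(\br,\ides)}$) under the quadratic substitution $s\mapsto 2s/(1+s^{2})$.

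The second evaluation is direct. Writing $\frac{2+sH(x)+sE(x)}{1-s^{2}E(x)H(x)}=\sum_{i\ge0}\bigl(2E(x)^{i}H(x)^{i}+sE(x)^{i}H(x)^{i+1}+sE(x)^{i+1}H(x)^{i}\bigr)s^{2i}$ and expanding the right argument as $\sum_{j\ge0}(EH)^{j}t^{j}$ (resp.\ $\sum_{j\ge0}H^{j}t^{j}$), I would invoke the homomorphism property of Lemma~\ref{l-pkdeshom} together with the evaluations in Lemma~\ref{l-scalprodHE}. Here the two parts diverge instructively. For part~(a) the right argument is the $z=1$ specialization, at which $\langle H(x),E^{j}H^{j}\rangle=\langle E(x),E^{j}H^{j}\rangle=\bigl(\tfrac{1+x}{1-x}\bigr)^{j}$, so the two $s$-terms of the numerator coincide and the summand collapses to a two-term expression proportional to $\bigl(\tfrac{1+x}{1-x}\bigr)^{2ij}\bigl(1+s(\tfrac{1+x}{1-x})^{j}\bigr)$. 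For part~(b) the right argument is the $z=0$ specialization, where $\langle H(x),H^{j}\rangle=(1-x)^{-j}$ and $\langle E(x),H^{j}\rangle=(1+x)^{j}$ are \emph{distinct}, so all three numerator terms survive and the summand becomes proportional to $2+s(1-x)^{-j}+s(1+x)^{j}$. This collapse-versus-survival dichotomy is exactly why (a) has fewer terms than (b).

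The delicate part will be the bookkeeping around the anomalous low-degree pieces $\frac{2}{1-s}$ and $\frac{2s}{(1-s)^{2}}xh_{1}$ of the birun ribbon generating function, which fall outside the uniform $n\ge2$ pattern. These must be extracted separately: paired against the $n=0$ and $n=1$ parts of the right argument, they produce precisely the explicit corrections $\frac{1}{(1-s)(1-t)}$ and $\frac{2stx}{(1-s)^{2}(1-t)^{2}}$ (resp.\ $\frac{stx}{(1-s)^{2}(1-t)^{2}}$) and force the summation to begin at $n=2$, while also pinning down the normalizing constants $\frac{(1+s)^{3}}{8(1-s)}$ and $\frac{(1+s)^{3}}{4(1-s)}$. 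Finally, one must notice that both evaluations carry an overall factor of $2$ coming from the constant term of the numerator $2+sH(x)+sE(x)$ (indeed, setting $x=0$ gives $\frac{2}{(1-s)(1-t)}$ on each side); cancelling this common factor of $2$ after equating the two evaluations yields the stated identities (a) and (b).
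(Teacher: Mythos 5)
Your proposal is correct and is essentially the proof the paper intends but omits: you compute exactly the right scalar products, pairing the birun ribbon generating function (with $t$ replaced by $s$) against $(1-tEH)^{-1}$ for part (a) and $(1-tH)^{-1}$ for part (b), evaluating each once via the birun expansion plus Foulkes's theorem (Theorem \ref{t-foulkes}) and once via the homomorphism and evaluation lemmas (Lemmas \ref{l-pkdeshom} and \ref{l-scalprodHE}). Your bookkeeping also checks out in detail, including the degree-$0$ and degree-$1$ anomalous terms, the overall factor of $2$, and the collapse-versus-survival dichotomy, since $\left\langle H(x),E^{j}H^{j}\right\rangle=\left\langle E(x),E^{j}H^{j}\right\rangle=\left(\frac{1+x}{1-x}\right)^{j}$ merges the two $s$-terms in (a), while $\left\langle H(x),H^{j}\right\rangle=(1-x)^{-j}$ and $\left\langle E(x),H^{j}\right\rangle=(1+x)^{j}$ keep all three terms in (b).
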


\renewcommand{\arraystretch}{1.3}
\begin{table}[H]
\centering
\noindent \begin{raggedright}
{\small{}}%
\begin{tabular}{|>{\centering}m{0.1in}|>{\raggedright}p{6in}|}
\hline 
\multicolumn{1}{|c|}{{\small{}$n$}} & {\small{}$P_{n}^{(\br,\ipk)}(s,t)$}\tabularnewline
\hline 
{\small{}$1$} & {\small{}$st$}\tabularnewline
\hline 
{\small{}$2$} & {\small{}$2st$}\tabularnewline
\hline 
{\small{}$3$} & {\small{}$(2s+2s^{2})t+2s^{2}t^{2}$}\tabularnewline
\hline 
{\small{}$4$} & {\small{}$(2s+4s^{2}+2s^{3})t+(8s^{2}+8s^{3})t^{2}$}\tabularnewline
\hline 
{\small{}$5$} & {\small{}$(2s+6s^{2}+6s^{3}+2s^{4})t+(20s^{2}+44s^{3}+24s^{4})t^{2}+(2s^{2}+8s^{3}+6s^{4})t^{3}$}\tabularnewline
\hline 
{\small{}$6$} & {\small{}$(2s+8s^{2}+12s^{3}+8s^{4}+2s^{5})t+(40s^{2}+144s^{3}+168s^{4}+64s^{5})t^{2}+(12s^{2}+80s^{3}+124s^{4}+56s^{5})t^{3}$}\tabularnewline
\hline 
{\small{}$7$} & {\small{}$(2s+10s^{2}+20s^{3}+20s^{4}+10s^{5}+2s^{6})t+(70s^{2}+364s^{3}+680s^{4}+548s^{5}+162s^{6})t^{2}+(42s^{2}+420s^{3}+1056s^{4}+1020s^{5}+342s^{6})t^{3}+(2s^{2}+32s^{3}+96s^{4}+104s^{5}+38s^{6})t^{4}$}\tabularnewline
\hline 
\end{tabular}{\small\par}
\par\end{raggedright}
\caption{\label{tb-bripk}Joint distribution of $\protect\br$ and $\protect\ipk$
over $\mathfrak{S}_{n}$}
\end{table}

\renewcommand{\arraystretch}{1.3}
\begin{table}[H]
\centering
\noindent \begin{raggedright}
{\small{}}%
\begin{tabular}{|>{\centering}m{0.1in}|>{\raggedright}p{6in}|}
\hline 
\multicolumn{1}{|c|}{{\small{}$n$}} & {\small{}$P_{n}^{(\br,\ides)}(s,t)$}\tabularnewline
\hline 
{\small{}$1$} & {\small{}$ts$}\tabularnewline
\hline 
{\small{}$2$} & {\small{}$(t+t^{2})s$}\tabularnewline
\hline 
{\small{}$3$} & {\small{}$(t+t^{3})s+4t^{2}s^{2}$}\tabularnewline
\hline 
{\small{}$4$} & {\small{}$(t+t^{4})s+(6t^{2}+6t^{3})s^{2}+(5t^{2}+5t^{3})s^{3}$}\tabularnewline
\hline 
{\small{}$5$} & {\small{}$(t+t^{5})s+(8t^{2}+12t^{3}+8t^{4})s^{2}+(14t^{2}+30t^{3}+14t^{4})s^{3}+(4t^{2}+24t^{3}+4t^{4})s^{4}$}\tabularnewline
\hline 
{\small{}$6$} & {\small{}$(t+t^{6})s+(10t^{2}+20t^{3}+20t^{4}+10t^{5})s^{2}+(28t^{2}+90t^{3}+90t^{4}+28t^{5})s^{3}+(14t^{2}+136t^{3}+136t^{4}+14t^{5})s^{4}+(5t^{2}+56t^{3}+56t^{4}+5t^{5})s^{5}$}\tabularnewline
\hline 
\end{tabular}{\small\par}
\par\end{raggedright}
\caption{\label{tb-brides}Joint distribution of $\protect\br$ and $\protect\ides$
over $\mathfrak{S}_{n}$}
\end{table}

The first several polynomials $P_{n}^{(\br,\ipk)}(s,t)$ and $P_{n}^{(\br,\ides)}(s,t)$
are displayed in Tables \ref{tb-bripk}\textendash \ref{tb-brides}.
Notice that the coefficients of $s^{k}t$ in $P_{n}^{(\br,\ipk)}(t)$
are twice the coefficients of $s^{k}t$ in $P_{n-1}^{(\udr,\ipk)}(t)$;
we shall give a simple proof of this fact.

\begin{prop}
\label{p-bripk0}For any $n\geq2$ and $k\geq0$, the number of permutations
in $\mathfrak{S}_{n}$ with $\br(\pi)=k$ and $\ipk(\pi)=0$ is equal
to $2{n-2 \choose k-1}$, twice the number of permutations in $\mathfrak{S}_{n-1}$
with $\udr(\pi)=k$ and $\ipk(\pi)=0$.
\end{prop}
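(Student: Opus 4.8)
The plan is to reduce the statement to a count of compositions, and then to a count of binary words. The starting point is the theorem of Troyka and Zhuang \cite[Theorem 5]{Troyka2022} already used in the proof of Proposition \ref{p-udripk0}: for each composition $L\vDash n$ there is exactly one permutation $\pi\in\mathfrak{S}_{n}$ with $\Comp(\pi)=L$ and $\ipk(\pi)=0$. Since $\br$ is a descent statistic, the number of $\pi\in\mathfrak{S}_{n}$ with $\br(\pi)=k$ and $\ipk(\pi)=0$ is therefore equal to the number of compositions $L\vDash n$ with $\br(L)=k$, and it suffices to count those compositions.

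Next I would encode compositions via their descent words. A composition $L\vDash n$ corresponds bijectively to its descent set $\Des(L)\subseteq[n-1]$, equivalently to a word $w\in\{A,D\}^{n-1}$ having a $D$ exactly in the positions of $\Des(L)$; this $w$ records the ascent/descent pattern of any permutation with descent composition $L$. Under this correspondence a birun is exactly a maximal block of equal letters in $w$, so $\br(L)$ is the number of maximal constant blocks of $w$. Counting the words $w\in\{A,D\}^{n-1}$ with exactly $k$ such blocks is then routine: one chooses the $k-1$ block boundaries among the $n-2$ internal gaps and independently chooses the initial letter, giving $2\binom{n-2}{k-1}$. Hence the number of compositions $L\vDash n$ with $\br(L)=k$ is $2\binom{n-2}{k-1}$.

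Finally I would invoke Proposition \ref{p-udripk0} with $n$ replaced by $n-1$: the number of permutations in $\mathfrak{S}_{n-1}$ with $\udr=k$ and $\ipk=0$ is $\binom{n-2}{k-1}$. Comparing this with the count from the previous paragraph yields simultaneously the closed form $2\binom{n-2}{k-1}$ and the ``twice'' relationship asserted in the proposition. The factor of $2$ even has a transparent combinatorial meaning: in the word model the $\binom{n-2}{k-1}$ choices of block boundaries match the compositions of $n-1$ into $k$ parts counted (via $\udr$) in Proposition \ref{p-udripk0}, while the extra factor of $2$ is exactly the choice of whether $w$ begins with an ascent or a descent.

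The one point requiring care is the translation of the birun definition into the word model, namely confirming that the biruns of a permutation correspond precisely to the maximal constant blocks of its descent word and that this count depends only on the descent composition, together with the index bookkeeping that a composition of $n$ produces a word of length $n-1$. This off-by-one shift is exactly what converts $\binom{n-1}{k-1}$ (the $\mathfrak{S}_n$ up-down run count) into $\binom{n-2}{k-1}$ and thereby links $\mathfrak{S}_n$ to $\mathfrak{S}_{n-1}$; everything else is a short binomial computation.
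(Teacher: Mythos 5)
Your proof is correct, and it takes a genuinely different route from the paper's, though both share the same opening reduction: by \cite[Theorem 5]{Troyka2022} (already invoked for Proposition \ref{p-udripk0}), each composition of $n$ carries exactly one permutation with $\ipk=0$, so the problem becomes counting compositions $L\vDash n$ with $\br(L)=k$. From there the paper argues bijectively: it constructs a 1-to-2 map from compositions of $n-1$ with $k$ parts---which the proof of Proposition \ref{p-udripk0} had already matched with the permutations of $\mathfrak{S}_{n-1}$ being compared against---to compositions $K\vDash n$ with $\br(K)=k$, sending $L=(L_{1},\dots,L_{k})$ to the descent compositions of the two up-down compositions $J=(L_{1}+1,L_{2},\dots,L_{k})$ and $J^{\prime}=(1,L_{1},\dots,L_{k})$; the factor of $2$ and the relation to $\mathfrak{S}_{n-1}$ fall out of the map, and the closed form $2\binom{n-2}{k-1}$ then comes from the formula of Proposition \ref{p-udripk0}. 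You instead count directly in the word model: a composition of $n$ is an ascent--descent word $w\in\{A,D\}^{n-1}$, biruns are the maximal constant blocks of $w$ (the one verification you rightly flag, and it is also where the hypothesis $n\geq2$ enters, since for $n=1$ the word is empty but $\br=1$), and choosing the $k-1$ block boundaries among the $n-2$ internal gaps together with the initial letter gives $2\binom{n-2}{k-1}$ outright, so Proposition \ref{p-udripk0} is needed only at the end for the ``twice'' comparison. Your route is more elementary and sidesteps the up-down composition bookkeeping for biruns, at the cost of being less structural; the paper's map exhibits the 2-to-1 relationship between the two permutation classes explicitly rather than deducing it by comparing closed forms. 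Pleasingly, the two factors of $2$ agree term by term: the paper's images $J$ and $J^{\prime}$ correspond exactly to your words beginning with $A$ (first up-down run of length at least $2$, i.e., initial part of $\udComp$ greater than $1$) and with $D$ (first up-down run of length $1$, i.e., initial part equal to $1$), respectively.
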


\begin{proof}
In light of Proposition \ref{p-udripk0} and its proof, it suffices
to construct a 1-to-2 map from compositions $L\vDash n-1$ with $k$
parts to compositions $K\vDash n$ satisfying $\br(K)=k$.
We claim that such a map is given by sending $L=(L_{1},L_{2},\dots,L_{k})$
to the descent compositions corresponding to the up-down compositions
$J=(L_{1}+1,L_{2},\dots,L_{k})$ and $J^{\prime}=(1,L_{1},L_{2},\dots,L_{k})$.
It is easy to see that $\br(\pi)=k$ if and only if $\udComp(\pi)$
has $k$ parts with initial part greater than 1 or if $\udComp(\pi)$
has $k+1$ parts with initial part 1, so this map is well-defined.
For example, let $L=(4,1,1,2)$. Then permutations with up-down compositions
$J=(5,1,1,2)$ and $J^{\prime}=(1,4,1,1,2)$ have descent compositions
$K=(5,2,1,1)$ and $K^{\prime}=(1,1,1,1,2,3)$, respectively, so our
map sends $L$ to $K$ and $K^{\prime}$. The reverse procedure is
given by first taking the up-down composition corresponding to the
descent composition input, and then removing the first part if it
is equal to 1 and subtracting 1 from the first part otherwise.
\end{proof}

In Table \ref{tb-brides}, we first collect powers of $s$ in displaying
the $P_{n}^{(\br,\ides)}(s,t)$ to showcase the symmetry present in
the coefficients of $s^{k}$. This symmetry can be explained in the
same way as for the polynomials $P_{n}^{(\pk,\ides)}(s,t)$: upon
taking reverses, we have that $\ides(\pi^{r})=n-1-\ides(\pi)$ but
$\br(\pi^{r})=\br(\pi)$. Note that the coefficients of $s^{k}$ for
$k\geq2$ also seem to be unimodal, but in general they are not $\gamma$-positive.

\section{\label{s-maj}Major index}

\subsection{A rederivation of the Garsia\textendash Gessel formula}

We now return full circle by showing how our approach can
be used to rederive Garsia and Gessel's formula (\ref{e-gg}) for
the polynomials $A_{n}(s,t,q,r)=\sum_{\pi\in\mathfrak{S}_{n}}s^{\des(\pi)}t^{\ides(\pi)}q^{\maj(\pi)}r^{\imaj(\pi)}$.

\begin{proof}[Proof of the Garsia--Gessel formula]
We seek to compute the the scalar product
\[
\left\langle \sum_{i=0}^{\infty}s^{i}\prod_{k=0}^{i}H(q^{k}x),\sum_{j=0}^{\infty}t^{j}\prod_{l=0}^{j}H(r^{l})\right\rangle 
\]
in two different ways. First, from Lemma \ref{l-ribexp} (a) we have{\allowdisplaybreaks
\begin{alignat}{1}
 & \left\langle \sum_{i=0}^{\infty}s^{i}\prod_{k=0}^{i}H(q^{k}x),\sum_{j=0}^{\infty}t^{j}\prod_{l=0}^{j}H(r^{l})\right\rangle \nonumber \\
 & \qquad\qquad\qquad=\sum_{m,n=0}^{\infty}\frac{\sum_{L\vDash m,M\vDash n}s^{\des(L)}t^{\des(M)}q^{\maj(L)}r^{\maj(M)}\left\langle r_{L},r_{M}\right\rangle }{(1-s)(1-qs)\cdots(1-q^{m}s)(1-t)(1-rt)\cdots(1-r^{n}t)}x^{m}\nonumber \\
 & \qquad\qquad\qquad=\sum_{n=0}^{\infty}\frac{\sum_{\pi\in\mathfrak{S}_{n}}s^{\des(\pi)}t^{\ides(\pi)}q^{\maj(\pi)}r^{\imaj(\pi)}}{(1-s)(1-qs)\cdots(1-q^{n}s)(1-t)(1-rt)\cdots(1-r^{n}t)}x^{n}\nonumber \\
 & \qquad\qquad\qquad=\sum_{n=0}^{\infty}\frac{A_{n}(s,t,q,r)}{(1-s)(1-qs)\cdots(1-q^{n}s)(1-t)(1-rt)\cdots(1-r^{n}t)}x^{n}\label{e-desmaj1}
\end{alignat}
}where, as usual, we apply Foulkes's theorem (Theorem \ref{t-foulkes})
to calculate the scalar product $\left\langle r_{L},r_{M}\right\rangle $.
Second, observe that 
\begin{align*}
\left\langle \sum_{i=0}^{\infty}s^{i}\prod_{k=0}^{i}H(q^{k}x),\sum_{j=0}^{\infty}t^{j}\prod_{l=0}^{j}H(r^{l})\right\rangle  & =\sum_{i,j=0}^{\infty}s^{i}t^{i}\left\langle \prod_{k=0}^{i}H(q^{k}x),\prod_{l=0}^{j}H(r^{l})\right\rangle \\
 & =\sum_{i,j=0}^{\infty}s^{i}t^{i}\prod_{k=0}^{i}\prod_{l=0}^{j}\left\langle H(q^{k}x),H(r^{l})\right\rangle 
\end{align*}
by Lemma \ref{l-desmajhom}. To evaluate $\left\langle H(q^{k}x),H(r^{l})\right\rangle $,
let us recall that $h_{n}=\sum_{\lambda\vdash n}m_{\lambda}$. Then
{\allowdisplaybreaks
\begin{align*}
\left\langle H(q^{k}x),H(r^{l})\right\rangle  & =\left\langle \sum_{i=0}^{\infty}h_{i}q^{ki}x^{i},\sum_{j=0}^{\infty}h_{j}r^{lj}\right\rangle \\
 & =\sum_{i=0}^{\infty}\sum_{j=0}^{\infty}\sum_{\lambda\vdash i}\left\langle m_{\lambda},h_{j}\right\rangle q^{ki}x^{i}r^{lj}\\
 & =\sum_{j=0}^{\infty}q^{kj}x^{j}r^{lj}\\
 & =\frac{1}{1-xq^{k}r^{l}},
\end{align*}
}whence 
\begin{equation}
\left\langle \sum_{i=0}^{\infty}s^{i}\prod_{k=0}^{i}H(q^{k}x),\sum_{j=0}^{\infty}t^{j}\prod_{l=0}^{j}H(r^{l})\right\rangle =\sum_{i,j=0}^{\infty}s^{i}t^{i}\prod_{k=0}^{i}\prod_{l=0}^{j}\frac{1}{1-xq^{k}r^{l}}.\label{e-desmaj2}
\end{equation}
Combining (\ref{e-desmaj1}) and (\ref{e-desmaj2}) yields Garsia
and Gessel's formula (\ref{e-gg}).
\end{proof}

\subsection{Major index and other statistics}

Lemma \ref{l-ribexp} (a) also enables us to derive formulas for mixed
two-sided distributions involving the major index. Define
\begin{align*}
P_{n}^{(\maj,\ipk,\des,\ides)}(q,y,s,t) & \coloneqq\sum_{\pi\in\mathfrak{S}_{n}}q^{\maj(\pi)}y^{\ipk(\pi)+1}s^{\des(\pi)}t^{\ides(\pi)+1},\\
P_{n}^{(\maj,\ilpk,\des,\ides)}(q,y,s,t) & \coloneqq\sum_{\pi\in\mathfrak{S}_{n}}q^{\maj(\pi)}y^{\ilpk(\pi)}s^{\des(\pi)}t^{\ides(\pi)},\quad\text{and}\\
P_{n}^{(\maj,\iudr,\des)}(q,s,t) & \coloneqq\sum_{\pi\in\mathfrak{S}_{n}}q^{\maj(\pi)}s^{\iudr(\pi)}t^{\des(\pi)}.
\end{align*}
We leave the proofs of the following formulas to the interested reader.
\begin{thm}
\label{t-majmixed} We have \leqnomode
\begin{multline*}
\tag{{a}}\qquad\frac{1}{(1-s)(1-t)}+\frac{1}{1+y}\sum_{n=1}^{\infty}\left(\frac{1+yt}{1-t}\right)^{n+1}\frac{P_{n}^{(\maj,\ipk,\des,\ides)}\left(q,\frac{(1+y)^{2}t}{(y+t)(1+yt)},s,\frac{y+t}{1+yt}\right)}{(1-s)(1-qs)\cdots(1-q^{n}s)}x^{n}\\
=\sum_{i,j=0}^{\infty}s^{i}t^{j}\prod_{k=0}^{i}\left(\frac{1+q^{k}yx}{1-q^{k}x}\right)^{j},
\end{multline*}
\begin{multline*}
\tag{{b}}\qquad\frac{1}{(1-s)(1-t)}+\sum_{n=1}^{\infty}\frac{(1+t)^{n}}{(1-t)^{n+1}}\frac{P_{n}^{(\maj,\ilpk,\des,\ides)}\left(q,\frac{(1+y)^{2}t}{(y+t)(1+yt)},s,\frac{y+t}{1+yt}\right)}{(1-s)(1-qs)\cdots(1-q^{n}s)}x^{n}\\
=\sum_{i,j=0}^{\infty}s^{i}t^{j}\prod_{k=0}^{i}\frac{(1+q^{k}yx)^{j}}{(1-q^{k}x)^{j+1}},
\end{multline*}
and
\begin{multline*}
\tag{{c}}\qquad\frac{1}{(1-s)(1-t)}+\frac{1}{2(1-s)^{2}}\sum_{n=1}^{\infty}\frac{(1+s^{2})^{n}}{(1-s^{2})^{n-1}}\frac{P_{n}^{(\maj,\iudr,\des)}\left(q,\frac{2s}{1+s^{2}},t\right)}{(1-t)(1-qt)\cdots(1-q^{n}t)}\\
=\sum_{i,j=0}^{\infty}s^{2i}t^{j}\left(1+s\prod_{k=0}^{j}\frac{1}{1-q^{k}x}\right)\prod_{k=0}^{j}\left(\frac{1+q^{k}x}{1-q^{k}x}\right)^{i}.
\end{multline*}
\end{thm}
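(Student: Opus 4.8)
The plan is to imitate the rederivation of the Garsia--Gessel formula given just above: for each part we evaluate a single scalar product in two ways, one direction yielding the left-hand side through Foulkes's theorem and the other yielding the right-hand side through the plethysm lemmas of Section~\ref{s-symfun}. The relevant scalar products are
\[
\left\langle \sum_{i=0}^{\infty}s^{i}\prod_{k=0}^{i}H(q^{k}x),\ \frac{1}{1-tE(y)H}\right\rangle
\quad\text{and}\quad
\left\langle \sum_{i=0}^{\infty}s^{i}\prod_{k=0}^{i}H(q^{k}x),\ \frac{H}{1-tE(y)H}\right\rangle
\]
for parts (a) and (b), and
\[
\left\langle \sum_{j=0}^{\infty}t^{j}\prod_{l=0}^{j}H(q^{l}x),\ \frac{1+sH}{1-s^{2}EH}\right\rangle
\]
for part (c). In every case the first factor records $\maj$ and $\des$ via Lemma~\ref{l-ribexp}~(a), while the second factor records the relevant inverse statistic together with $\ides$ (or with nothing else, in (c)) via Lemma~\ref{l-ribexp}~(b), (c), or (d) specialized at $x=1$.

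First I would carry out the Foulkes direction. Expanding the first factor by Lemma~\ref{l-ribexp}~(a) and the second by the appropriate part of Lemma~\ref{l-ribexp} turns each scalar product into a double sum $\sum_{L,M}c_{L}d_{M}\left\langle r_{L},r_{M}\right\rangle$, where $c_{L}$ carries the $\maj$/$\des$ weight and $d_{M}$ carries the inverse-statistic weight. Theorem~\ref{t-foulkes} collapses this to a sum over $\pi$ with $\Comp(\pi)=L$ and $\Comp(\pi^{-1})=M$, which reassembles into the normalized polynomial $P_{n}^{(\cdots)}$ on the left-hand side, exactly as in the passage from \eqref{e-pkdessc0} to \eqref{e-pkdessc1} and in \eqref{e-desmaj1}. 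The denominators $(1-t)(1-qt)\cdots(1-q^{n}t)$ come from Lemma~\ref{l-ribexp}~(a), the peak/left-peak/up-down-run prefactors and the substitutions of arguments into $P_{n}$ come from Lemma~\ref{l-ribexp}~(b)--(d), and the $x=1$ specialization on the second factor is what makes the resulting statistics the inverse ones.

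Next I would evaluate the same products in the plethysm direction. Expanding the second factor as a geometric series reduces the task to evaluating $\left\langle \prod_{k}H(q^{k}x),E(y)^{j}H^{j}\right\rangle$ in (a), $\left\langle \prod_{k}H(q^{k}x),E(y)^{j}H^{j+1}\right\rangle$ in (b), and, using $\tfrac{1+sH}{1-s^{2}EH}=\sum_{i}s^{2i}E^{i}H^{i}+s\sum_{i}s^{2i}E^{i}H^{i+1}$, the two pieces $\left\langle \prod_{l}H(q^{l}x),E^{i}H^{i}\right\rangle$ and $\left\langle \prod_{l}H(q^{l}x),E^{i}H^{i+1}\right\rangle$ in (c). Because $f\mapsto\left\langle f,E(y)^{j}H^{j}\right\rangle$ is an algebra homomorphism (Lemma~\ref{l-pkdeshom}), each balanced product factors over $k$ (resp.\ $l$), and the single factor is evaluated by Lemma~\ref{l-scalprodHE}~(a) as $\left\langle H(q^{k}x),E(y)^{j}H^{j}\right\rangle=\bigl((1+q^{k}yx)/(1-q^{k}x)\bigr)^{j}$. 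This already produces the right-hand side of (a) and the balanced factors in (c).

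The main obstacle will be the terms with an extra factor of $H$, namely $H^{j+1}$ in (b) and $H^{i+1}$ in (c), to which Lemma~\ref{l-pkdeshom} does not directly apply. Here I would transfer the extra $H$ across the scalar product via Lemma~\ref{l-sppleth}, writing for instance $\left\langle \prod_{k}H(q^{k}x),E(y)^{j}H^{j+1}\right\rangle=\left\langle \bigl(\prod_{k}H(q^{k}x)\bigr)[X+1],E(y)^{j}H^{j}\right\rangle$; since plethysm is a homomorphism in its first argument and $H(q^{k}x)[X+1]=H(q^{k}x)/(1-q^{k}x)$ by Lemma~\ref{l-X+1}~(a), this equals $\prod_{k}(1-q^{k}x)^{-1}$ times the balanced product already computed. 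That accounts for the extra $(1-q^{k}x)^{-1}$ in the denominators of (b) and for the summand $s\prod_{l}(1-q^{l}x)^{-1}$ in (c). Equating the two evaluations of each scalar product then gives the three identities; the remaining work is purely the bookkeeping of keeping the $q$-deformed denominators aligned on both sides, together with verifying---as in the step from \eqref{e-pkdessc0} to \eqref{e-pkdessc1}---that the ribbon-expansion weights substitute correctly into $P_{n}^{(\cdots)}$.
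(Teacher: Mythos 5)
Your proposal is correct and is precisely the argument the paper intends (the proof is explicitly left to the reader): the same three scalar products, each evaluated once via Lemma \ref{l-ribexp} (a) together with parts (b)--(d) at $x=1$ and Foulkes's theorem (Theorem \ref{t-foulkes}), and once via the homomorphism/plethysm machinery of Lemmas \ref{l-pkdeshom}, \ref{l-scalprodHE}, \ref{l-sppleth}, and \ref{l-X+1}, exactly as in the proofs of Theorem \ref{t-lpkdes}, Theorem \ref{t-udr}, and the Garsia--Gessel formula. One remark: your Foulkes-side computation for part (b) yields the prefactor $(1+yt)^{n}$ rather than the $(1+t)^{n}$ printed in the theorem, and the printed version is evidently a typo rather than a flaw in your argument, since the stated right-hand side of (b) depends on $y$ already at order $x^{1}$ while a $y$-free prefactor could not produce this.
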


One may obtain formulas for $(\maj,\ipk)$,
$(\maj,\ilpk)$, and $(\maj,\iudr)$ by specializing Theorem \ref{t-majmixed}
appropriately. For example, setting $y=1$ in Theorem \ref{t-majmixed}
(a), multiplying both sides by $1-s$, and then taking the limit of
both sides as $s\rightarrow1$ yields
\[
\frac{1}{1-t}+\frac{1}{2}\sum_{n=1}^{\infty}\left(\frac{1+t}{1-t}\right)^{n+1}\frac{P_{n}^{(\maj,\ipk)}\left(q,\frac{4t}{(1+t)^{2}}\right)}{(1-q)(1-q^{2})\cdots(1-q^{n})}x^{n}=\sum_{j=0}^{\infty}t^{j}\prod_{k=0}^{\infty}\left(\frac{1+q^{k}x}{1-q^{k}x}\right)^{j}
\]
where $P_{n}^{(\maj,\ipk)}(q,t)\coloneqq P_{n}^{(\maj,\ipk,\des,\ides)}(q,t,1,1)=\sum_{\pi\in\mathfrak{S}_{n}}q^{\maj(\pi)}t^{\ipk(\pi)+1}$.

\section{\label{s-conj}Conjectures}

We conclude with a discussion of some conjectures concerning some
of the permutation statistic distributions studied in this paper.

\subsection{Real-rootedness}

A univariate polynomial is called \textit{real-rooted} if it has only
real roots. We conjecture that the distributions of $\ipk$ and $\ilpk$
over permutations in $\mathfrak{S}_{n}$ with any fixed value of $\pk$,
$\lpk$, and $\udr$\textemdash as well as that of $\ipk$ upon fixing
$\br$\textemdash are all encoded by real-rooted polynomials. This
conjecture has been empirically verified for all $n\leq50$; our formulas
from Sections \ref{s-pkdes}\textendash \ref{s-maj} played a crucial
role in formulating and gathering supporting evidence for this conjecture
as they have allowed us to efficiently compute the polynomials in
question.
\begin{conjecture}
\label{c-rr}The following polynomials are real-rooted for all $n\geq1$:
\begin{enumerate}
\item [\normalfont{(a)}] ${\displaystyle [s^{k+1}]\,P_{n}^{(\pk,\ipk)}(s,t)=\sum_{\substack{\pi\in\mathfrak{S}_{n}\\
\pk(\pi)=k
}
}}t^{\ipk(\pi)+1}$ for  $0\leq k\leq\left\lfloor (n-1)/2\right\rfloor $,
\item [\normalfont{(b)}] ${\displaystyle [s^{k}]\,P_{n}^{(\lpk,\ilpk)}(s,t)=\sum_{\substack{\pi\in\mathfrak{S}_{n}\\
\lpk(\pi)=k
}
}}t^{\ilpk(\pi)}$ for  $0\leq k\leq\left\lfloor n/2\right\rfloor $,
\item [\normalfont{(c)}] ${\displaystyle [s^{k}]\,P_{n}^{(\pk,\ilpk)}(t,s)=\sum_{\substack{\pi\in\mathfrak{S}_{n}\\
\lpk(\pi)=k
}
}}t^{\ipk(\pi)+1}$ for  $0\leq k\leq\left\lfloor n/2\right\rfloor $,
\item [\normalfont{(d)}] ${\displaystyle [s^{k+1}]\,P_{n}^{(\pk,\ilpk)}(s,t)=\sum_{\substack{\pi\in\mathfrak{S}_{n}\\
\pk(\pi)=k
}
}}t^{\ilpk(\pi)}$ for  $0\leq k\leq\left\lfloor (n-1)/2\right\rfloor $,
\item [\normalfont{(e)}] ${\displaystyle [s^{k}]\,P_{n}^{(\udr,\ipk)}(s,t)=\sum_{\substack{\pi\in\mathfrak{S}_{n}\\
\udr(\pi)=k
}
}}t^{\ipk(\pi)+1}$ for  $1\leq k\leq n$,
\item [\normalfont{(f)}] ${\displaystyle [s^{k}]\,P_{n}^{(\udr,\ilpk)}(s,t)=\sum_{\substack{\pi\in\mathfrak{S}_{n}\\
\udr(\pi)=k
}
}}t^{\ilpk(\pi)}$ for  $1\leq k\leq n$, and
\item [\normalfont{(g)}] ${\displaystyle [s^{k}]\,P_{n}^{(\br,\ipk)}(s,t)=\sum_{\substack{\pi\in\mathfrak{S}_{n}\\
\br(\pi)=k
}
}}t^{\ipk(\pi)+1}$ for  $1\leq k\leq n$.
\end{enumerate}
\end{conjecture}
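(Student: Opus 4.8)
The plan is to prove all seven parts within a single framework, exploiting that $\pk$, $\lpk$, $\udr$, and $\br$ are descent statistics, so that for each fixed value $k$ of the primary statistic the relevant set of permutations is a union of descent classes. By Foulkes's theorem (Theorem \ref{t-foulkes}) and the ribbon generating functions of Lemma \ref{l-ribexp}, the distribution of the inverse statistic over such a set is a scalar product of symmetric functions, which the product formulas of Theorems \ref{t-pkprod}, \ref{t-lpkilpkprod}, \ref{t-udripk}~(c), and their analogues render explicit. Concretely, extracting the coefficient of the appropriate power of $s$ expresses each target polynomial as an explicit linear combination, over $k$, of the polynomials $(1-t)^{(n-k)/2}P_{k}^{\pk}(t)$ (and, in the other parts, the analogous combinations built from $P_{k}^{\lpk}$, $A_{k}$, and $B_{k}$). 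These building blocks are all real-rooted, by the classical real-rootedness of the Eulerian, type~B Eulerian, peak, left-peak, and up-down-run polynomials together with the fact that multiplying by $(1-t)^{m}$ only adds real roots at $t=1$.

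First I would try to establish real-rootedness through an interlacing/recurrence argument in the spirit of Chudnovsky--Seymour and Savage--Visontai: derive, from the scalar-product identities that produced Lemma \ref{l-ribexp} and Theorem \ref{t-pkprod}, a recurrence in $n$ for the polynomials $[s^{k+1}]\,P_{n}^{(\pk,\ipk)}(s,t)$ and their counterparts, and then show that consecutive members of the resulting family interlace, so that real-rootedness propagates from $n-1$ to $n$. A parallel treatment should handle parts (b), (e), (f), and (g), with the building blocks replaced appropriately, while parts (c) and (d) should follow from the symmetry between $\pk$ and $\lpk$ already exploited in Section \ref{s-lpk}.

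The hard part will be the signs. Extracting a single coefficient in $s$ from the product formulas produces a \emph{signed} combination of the real-rooted polynomials $(1-t)^{(n-k)/2}P_{k}^{\pk}(t)$, because the factor $(1-s)^{(n-k)/2}$ contributes alternating coefficients; consequently the target is not manifestly a nonnegative combination of an interlacing family, and the off-the-shelf positivity criteria (compatible polynomials, common interleavers) do not apply directly. Overcoming this---by finding a sign-coherent recurrence, an explicitly interlacing reindexing of the summands, or a real-rootedness-preserving linear operator whose symbol is stable and which carries a known real-rooted polynomial to the target---is the crux of the problem, and the fact that the seven cases use genuinely different building blocks makes a fully uniform argument harder to come by.

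Finally, it is worth recording what does \emph{not} work, so as to narrow the search. The elegant route of deducing coefficientwise real-rootedness from global real stability of the bivariate polynomial is unavailable: even $P_{3}^{(\pk,\ipk)}(s,t)=st\bigl((3+s)+(1+s)t\bigr)$ and $A_{2}(s,t)=st(1+st)$ fail to be real stable, since each vanishes at a point with $\Im s>0$ and $\Im t>0$ (take $s=i$, $t=i$ in the latter). Thus the mechanism must operate one variable at a time, which points back to the recurrence/interlacing and linear-operator strategies above as the only viable candidates.
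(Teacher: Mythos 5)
The statement you were asked to prove is Conjecture \ref{c-rr}: the paper offers no proof of it, only empirical verification for all $n\leq 50$ (carried out efficiently using the product formulas of Sections \ref{s-pkdes}--\ref{s-maj}), so there is no argument of the authors' to compare yours against. Read on its own terms, your proposal is a research program rather than a proof, and you essentially say so yourself: the one step that would constitute the actual proof---deriving a recurrence for $[s^{k+1}]\,P_{n}^{(\pk,\ipk)}(s,t)$ and its six analogues and establishing that consecutive members interlace, or exhibiting a real-rootedness-preserving operator carrying a known real-rooted polynomial to the target---is precisely the step you flag as unresolved. Extracting $[s^{k+1}]$ from Theorem \ref{t-pkprod} does express the target as a $\mathbb{Q}$-linear combination of the real-rooted polynomials $(1-t)^{(n-m)/2}P_{m}^{\pk}(t)$, but, as you correctly observe, the coefficients (coming from $[s^{k+1}]\,(1-s)^{(n-m)/2}P_{m}^{\pk}(s)$) are of mixed sign, and a signed combination of real-rooted polynomials---even one drawn from an interlacing family---need not be real-rooted; nothing in your write-up closes that gap. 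Note also that part (g) is in worse shape than you suggest: the paper has no product-formula analogue of Theorem \ref{t-pkprod} for $(\br,\ipk)$---Theorem \ref{t-brides} gives only generating-function identities---so for that part even the starting decomposition is missing.

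That said, two of your side observations are correct and worth keeping. Your stability counterexamples check out: $A_{2}(s,t)=st(1+st)$ vanishes at $s=t=i$, and $P_{3}^{(\pk,\ipk)}(s,t)=st\bigl((3+s)+(1+s)t\bigr)$ vanishes at $t=-(3+s)/(1+s)$, which has positive imaginary part whenever $s$ does; so coefficientwise real-rootedness genuinely cannot be deduced from bivariate real stability, and any proof must operate one variable at a time. The reduction of parts (c) and (d) to the $(\lpk,\ipk)$ product formula via inversion symmetry is also sound. But identifying the obstruction is not the same as overcoming it: as written, your proposal proves none of the seven parts, which is consistent with the statement's status in the paper as an open conjecture.
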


Conjecture \ref{c-rr} would imply that all of these polynomials are
unimodal and log-concave, and can be used to show that the distributions 
of $\ipk$ and $\ilpk$ over permutations in $\mathfrak{S}_{n}$ with a fixed value
$k$ for the relevant statistics each converge to a normal distribution
as $n\rightarrow\infty$. 

It is worth noting that the peak and left peak polynomials 
\[
P_{n}^{\pk}(t)=\sum_{\pi\in\mathfrak{S}_{n}}t^{\pk(\pi)+1}=\sum_{\pi\in\mathfrak{S}_{n}}t^{\ipk(\pi)+1}\quad\text{and}\quad P_{n}^{\lpk}(t)=\sum_{\pi\in\mathfrak{S}_{n}}t^{\lpk(\pi)}=\sum_{\pi\in\mathfrak{S}_{n}}t^{\ilpk(\pi)}
\]
are known to be real-rooted (see, e.g., \cite{Petersen2007,Stembridge1997}),
so Conjecture \ref{c-rr} would also have the consequence that both
of these ``real-rooted distributions'' can be partitioned into real-rooted
distributions based on the value of another statistic.

\subsection{Gamma-positivity}

Real-rootedness provides a powerful method of proving unimodality
results in combinatorics, but when the polynomials in question are
known to be symmetric,\footnote{The use of the term ``symmetric polynomial'' in this context is different
from that in symmetric function theory. Here, a symmetric polynomial
refers to a univariate polynomial whose coefficients form a symmetric
sequence.} an alternate avenue to unimodality is $\gamma$-positivity. Any symmetric
polynomial $f(t)\in\mathbb{R}[t]$ with center of symmetry $n/2$
can be written uniquely as a linear combination of the polynomials
$\{t^{j}(1+t)^{n-2j}\}_{0\leq j\leq\left\lfloor n/2\right\rfloor }$\textemdash referred
to as the \textit{gamma basis}\textemdash and $f(t)$ is called \textit{$\gamma$-positive}
if its coefficients in the gamma basis are non-negative. The $\gamma$-positivity
of a polynomial directly implies its unimodality, and $\gamma$-positivity
has appeared in many contexts within combinatorics and geometry; see
\cite{Athanasiadis2017} for a detailed survey.

The prototypical example of a family of $\gamma$-positive polynomials
are the Eulerian polynomials $A_{n}(t)$, as established by Foata
and Sch\"{u}tzenberger \cite{Foata1970}, and there is also a sizable
literature on the $\gamma$-positivity of ``Eulerian distributions''
(polynomials encoding the distribution of the descent number $\des$)
over various restricted subsets of $\mathfrak{S}_{n}$. For example,
the Eulerian distribution over linear extensions of sign-graded posets
\cite{Braenden2004/06}, $r$-stack-sortable permutations \cite{Braenden2008},
separable permutations \cite{Fu2018}, and involutions \cite{Wang2019}
are all known to be $\gamma$-positive. The two-sided Eulerian distribution 
$(\des,\ides)$ is also known to satisfy a refined $\gamma$-positivity 
property which was conjectured by Gessel and later proved by Lin \cite{Lin2016a}.

Define
\[
\hat{A}_{n,k}(t)\coloneqq\sum_{\substack{\pi\in\mathfrak{S}_{n}\\
\pk(\pi)=k
}
}t^{\ides(\pi)+1}
\]
to be the polynomial encoding the distribution of $\ides$ over permutations
in $\mathfrak{S}_{n}$ with $k$ peaks, or equivalently, the distribution
of $\des$ (i.e., the Eulerian distribution) over permutations in
$\mathfrak{S}_{n}$ whose inverses have $k$ peaks. We conjecture that
these polynomials are $\gamma$-positive as well.
\begin{conjecture}
\label{cj-gp}For all $n\geq1$ and $0\leq k\leq\left\lfloor (n-1)/2\right\rfloor $,
the polynomials $\hat{A}_{n,k}(t)$ are $\gamma$-positive with center
of symmetry $(n+1)/2$.
\end{conjecture}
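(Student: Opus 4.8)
The plan is to first record the symmetry of $\hat{A}_{n,k}(t)$ and then reduce $\gamma$-positivity to a single combinatorial non-negativity statement. The symmetry with center $(n+1)/2$ follows exactly as for $P_{n}^{(\pk,\ides)}(s,t)$ in Section~\ref{s-pkdes}: the reverse $\pi^{r}$ satisfies $\ides(\pi^{r})=n-1-\ides(\pi)$ while $\pk(\pi^{r})=\pk(\pi)$, so within each fixed value of $\pk$ the map $\pi\mapsto\pi^{r}$ sends $t^{\ides(\pi)+1}$ to $t^{\,n-\ides(\pi)}$, giving $t^{n+1}\hat{A}_{n,k}(1/t)=\hat{A}_{n,k}(t)$. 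Granting symmetry, $\hat{A}_{n,k}(t)$ lies in the span of the gamma basis $\{t^{j}(1+t)^{n+1-2j}\}$, and the standard substitution shows that $\hat{A}_{n,k}(t)=\sum_{j}\gamma_{n,k,j}\,t^{j}(1+t)^{n+1-2j}$ has all $\gamma_{n,k,j}\ge 0$ if and only if
\[
\hat{G}_{n,k}(v)\coloneqq\Big(\tfrac{2}{1+t}\Big)^{n+1}\hat{A}_{n,k}(t),\qquad v=\tfrac{4t}{(1+t)^{2}},
\]
is a polynomial in $v$ with non-negative coefficients. Bundling these over $k$, I set $Q_{n}(s,v)\coloneqq(2/(1+t))^{n+1}P_{n}^{(\pk,\ides)}(s,t)$ with $v=4t/(1+t)^{2}$, so that $[s^{k+1}]\,Q_{n}(s,v)=\hat{G}_{n,k}(v)$; the conjecture is then equivalent to the assertion that $Q_{n}(s,v)$ has non-negative coefficients as a polynomial in $s$ and $v$.

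Next I would make $Q_{n}(s,v)$ explicit. Starting from Theorem~\ref{t-pkidesprod} and applying $(2/(1+t))^{n+1}$, each summand $(1-t)^{n-m}A_{m}(t)$ is converted, using \eqref{e-Apk} and the identity $1-t=(1+t)\sqrt{1-v}$, into $2^{\,n-m}(1-v)^{(n-m)/2}P_{m}^{\pk}(v)$. Since $d(n,m)=0$ unless $n-m$ is even, all half-integer exponents disappear and I obtain the symmetric closed form
\[
Q_{n}(s,v)=\frac{1}{n!}\sum_{m=0}^{n}d(n,m)\,2^{\,n-m}\big((1-s)(1-v)\big)^{(n-m)/2}P_{m}^{\pk}(s)\,P_{m}^{\pk}(v).
\]
Thus the conjecture reduces to the single statement that this alternating sum is coefficientwise non-negative. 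As a check, for $n=3$ one finds $Q_{3}(s,v)=4sv+2s^{2}v^{2}$, the negative contributions of the $m=1$ summand cancelling against the $m=3$ summand.

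The crux is precisely this cancellation: the factors $P_{m}^{\pk}(s)$ and $P_{m}^{\pk}(v)$ have non-negative coefficients, but $((1-s)(1-v))^{(n-m)/2}$ carries minus signs, so positivity emerges only after the sum over $m$ collapses. I would attack this via a sign-reversing involution or, more promisingly, a group action. Passing to inverses rewrites $\hat{A}_{n,k}(t)=\sum_{\pi\,:\,\pk(\pi^{-1})=k}t^{\des(\pi)+1}$, so one wants an action on $\{\pi:\pk(\pi^{-1})=k\}$ that toggles the double ascents and double descents of $\pi$—thereby producing the factors $(1+t)^{n+1-2j}$ out of the $\des$-statistic—while preserving $\pk(\pi^{-1})$. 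The natural candidate is the modified Foata--Strehl (valley-hopping) action, which $\gamma$-expands the descent distribution and fixes the number of peaks of $\pi$; its inverse conjugate instead fixes $\pk(\pi^{-1})$ but acts on $\des(\pi^{-1})=\ides(\pi)$ rather than on $\des(\pi)$.

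The main obstacle is reconciling these two requirements: $\des$ is a position-side statistic controlled by valley-hopping on $\pi$, whereas the fixed quantity $\pk(\pi^{-1})$ is a value-side statistic of the inverse, and no single standard action respects both at once. Designing an action on inverse descent classes that toggles $\des(\pi)$ while leaving $\pk(\pi^{-1})$ invariant—or, equivalently, giving an enriched $P$-partition or peak-algebra interpretation of the coefficients $\gamma_{n,k,j}$ that manifests their non-negativity—is where the real work lies. I would first test the approach on the extremal layers $k=0$, where the relevant permutations have descent composition $(1^{j},n-j)$, and $k=\lfloor(n-1)/2\rfloor$, where $\hat{A}_{n,k}(t)$ collapses to a single gamma-basis element, before seeking the general action.
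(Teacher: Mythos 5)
This statement is Conjecture \ref{cj-gp}; the paper gives no proof of it, only empirical verification up to $n\leq 80$ (via Theorem \ref{t-pkides} (a)) and the stronger pinnacle-set refinement, Conjecture \ref{cj-gppin}, which is itself open. So there is no proof to match your attempt against, and your attempt does not close the gap---as you yourself concede. The parts you actually prove are correct: the symmetry of $\hat{A}_{n,k}(t)$ with center $(n+1)/2$ via reversal; the standard equivalence between $\gamma$-positivity and nonnegativity of the coefficients of $\hat{G}_{n,k}(v)$; and the closed form $Q_{n}(s,v)=\frac{1}{n!}\sum_{m}d(n,m)\,2^{n-m}\bigl((1-s)(1-v)\bigr)^{(n-m)/2}P_{m}^{\pk}(s)P_{m}^{\pk}(v)$, which does follow from Theorem \ref{t-pkidesprod} together with \eqref{e-Apk} and $1-t=(1+t)\sqrt{1-v}$ (note \eqref{e-Apk} is stated for $m\geq1$, but $d(n,0)=0$ for $n\geq1$, so the $m=0$ term is harmless); your $n=3$ check is right. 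This is a clean reformulation in the spirit of Theorem \ref{t-pk} (c). But the entire content of the conjecture is the coefficientwise nonnegativity of this alternating sum, and at that point you offer only a plan: you correctly observe that the modified Foata--Strehl action toggles double ascents/descents of $\pi$ while preserving $\pk(\pi)$, not $\pk(\pi^{-1})$, and that its conjugate by inversion preserves $\pk(\pi^{-1})$ but acts on $\ides(\pi)$ rather than $\des(\pi)$. Reconciling a position-side statistic of $\pi$ with a value-side statistic of $\pi^{-1}$ is precisely the open problem, and nothing in the proposal circumvents it. So the proof has a genuine gap exactly where the conjecture is hard.

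One concrete error in your testing plan: the top layer $k=\lfloor(n-1)/2\rfloor$ does \emph{not} collapse to a single gamma-basis element in general. For $n=5$, $k=2$, the $16$ permutations with two peaks give $\hat{A}_{5,2}(t)=2t^{2}+12t^{3}+2t^{4}=2t^{2}(1+t)^{2}+8t^{3}$, requiring two gamma terms. (The entry $(2t^{2}+12t^{3}+12t^{4})s^{3}$ in Table \ref{tb-desipk} is a typo: the reversal symmetry forces equal $t^{2}$- and $t^{4}$-coefficients, and the coefficients of $[s^{3}]$ must sum to $16$, the number of $\pi\in\mathfrak{S}_{5}$ with $\pk(\pi)=2$.) Likewise for $n=6$, $k=2$ one finds $12t^{2}(1+t)^{3}+88t^{3}(1+t)$. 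Only the bottom layer is degenerate: by Proposition \ref{p-pkides0} (applied to inverses), $\hat{A}_{n,0}(t)=t(1+t)^{n-1}$, a single gamma element. In summary: your reductions are sound and potentially useful scaffolding, but the positivity step---the conjecture itself---remains unproved.
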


In fact, we shall give a stronger conjecture which refines by ``pinnacle
sets''. Given a permutation $\pi\in\mathfrak{S}_{n}$, the \textit{pinnacle
set} $\Pin(\pi)$ of $\pi$ is defined by 
\[
\Pin(\pi)\coloneqq\{\,\pi(k):\pi(k-1)<\pi(k)>\pi(k+1)\text{ and }2\leq k\leq n-1\,\}.
\]
In other words, $\Pin(\pi)$ contains all of the values $\pi(k)$
at which $k$ is a peak of $\pi$. Given $n\geq1$ and $S\subseteq[n]$,
define the polynomial $\hat{A}_{n,S}^{\ides}(t)$ by
\[
\hat{A}_{n,S}^{\ides}(t)\coloneqq\sum_{\substack{\pi\in\mathfrak{S}_{n}\\
\Pin(\pi)=S
}
}t^{\ides(\pi)+1};
\]
this gives the distribution of the inverse descent number over permutations
in $\mathfrak{S}_{n}$ with a fixed pinnacle set $S$, or equivalently,
the Eulerian distribution over permutations in $\mathfrak{S}_{n}$
with ``inverse pinnacle set'' $S$.
\begin{conjecture}
\label{cj-gppin}For all $n\geq1$ and $S\subseteq[n]$, the polynomials
$\hat{A}_{n,S}^{\ides}(t)$ are $\gamma$-positive with center of
symmetry $(n+1)/2$.
\end{conjecture}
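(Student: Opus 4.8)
The plan is to separate the statement into its two constituent claims: the symmetry of $\hat{A}_{n,S}^{\ides}(t)$ about $(n+1)/2$, and the non-negativity of its coefficients in the gamma basis. The symmetry is straightforward and can be disposed of first. The reverse map $\pi\mapsto\pi^{r}$, where $\pi^{r}(i)=\pi(n+1-i)$, is an involution on $\mathfrak{S}_{n}$ satisfying $\ides(\pi^{r})=n-1-\ides(\pi)$ (as already used for the $(\pk,\ides)$ symmetry), and a direct check shows that it sends peaks to peaks at reflected positions with the same values, so that $\Pin(\pi^{r})=\Pin(\pi)$. Thus $\pi\mapsto\pi^{r}$ restricts to an involution of $\{\pi:\Pin(\pi)=S\}$ that reverses $\ides$, forcing $\hat{A}_{n,S}^{\ides}(t)=t^{n+1}\hat{A}_{n,S}^{\ides}(1/t)$ and hence symmetry about $(n+1)/2$. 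Summing over all $S$ with $|S|=k$ recovers the symmetry asserted in Conjecture \ref{cj-gp}, so the genuine content is $\gamma$-nonnegativity.

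Before attacking $\gamma$-positivity, I would explain why the symmetric-function machinery of this paper does not apply directly. The pinnacle set is not a descent statistic, so $\{\pi:\Pin(\pi)=S\}$ is not a union of descent classes, and one cannot read $\hat{A}_{n,S}^{\ides}$ off a ribbon expansion in the manner of Sections \ref{s-pkdes}--\ref{s-maj}. One could still hope to compute $\hat{A}_{n,S}^{\ides}$ via Theorem \ref{t-qsymgf}, provided the quasisymmetric generating function $\sum_{\pi:\Pin(\pi)=S}F_{\Comp(\pi^{-1})}$ is symmetric; but even granting symmetry, its Schur expansion is useless for $\gamma$-positivity, because the $\des$-enumerator of $\mathrm{SYT}(\lambda)$ is typically not even symmetric about $(n+1)/2$ (for $\lambda=(3,1)$ it equals $3t^{2}$), so individual Schur summands are not $\gamma$-positive and the symmetry of the total arises only through cancellation across shapes. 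This rules out the ``expand-and-specialize'' route and points to a combinatorial argument.

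The main approach I would pursue is a group action of modified Foata--Strehl (``valley-hopping'') type, the standard engine for $\gamma$-positivity of Eulerian-type polynomials. Since our statistic is $\ides(\pi)=\des(\pi^{-1})$, I would apply valley-hopping to the word $\pi^{-1}$: the generators $\varphi_{x}$ fix the peaks and valleys of $\pi^{-1}$ while toggling double ascents and double descents, the orbits are Boolean, each orbit has a unique representative whose $\pi^{-1}$ has no double descents, and an orbit whose representative has $d$ peaks contributes exactly $t^{d+1}(1+t)^{n-1-2d}$ to $\sum t^{\des(\pi^{-1})+1}$. Conjugating by inversion gives an action $\psi_{x}={}^{-1}\circ\varphi_{x}\circ{}^{-1}$ on $\pi$ under which $\ides$ behaves as $\des$ does under $\varphi_{x}$. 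The whole argument then reduces to a single compatibility lemma: each $\psi_{x}$ preserves the pinnacle set $\Pin(\pi)$, so that valley-hopping restricts to an action on $\{\pi:\Pin(\pi)=S\}$ and the gamma decomposition descends to this restricted set.

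The hard part is precisely that compatibility lemma, and I expect it to be the crux of the whole problem. Valley-hopping on $\pi^{-1}$ moves the letter of a fixed \emph{value} $x$ across a monotone arc, which in terms of $\pi$ is a mixed position-and-value surgery; there is no evident reason that such a move should preserve the set of peak values of $\pi$ itself, and reconciling this value-peak condition on $\pi$ with a value-based move on $\pi^{-1}$ is exactly the kind of two-sided interaction that makes these problems difficult. If the naive conjugated action fails to preserve $\Pin$, the fallback is to engineer a bespoke action directly on $\{\pi:\Pin(\pi)=S\}$ using the recursive structure of pinnacle sets (inserting $n$, then the largest non-pinnacle letter, and so on), designed so that it changes $\ides$ by $\pm1$ on ``removable'' letters while fixing $S$, and then to verify orbit-by-orbit that the $\ides$-generating function of each orbit is a single gamma monomial $t^{j+1}(1+t)^{n-1-2j}$. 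Establishing that some such action preserves $S$, is Boolean on the correct letters, and has double-$\ides$-descent-free representatives would complete the proof; the refinement in Conjecture \ref{cj-gppin} would then immediately yield Conjecture \ref{cj-gp} upon summing over all $S$ with $|S|=k$.
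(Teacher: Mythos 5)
The statement you set out to prove is Conjecture \ref{cj-gppin}, which the paper does not prove: the authors explicitly say they have no formula for the polynomials $\hat{A}_{n,S}^{\ides}(t)$ and have only verified the conjecture computationally up to $n=10$. So there is no proof in the paper to compare against, and your proposal must stand on its own as a proof---which it does not. The symmetry half is correct (and is the easy part, obtained by the same reversal argument the paper uses for $P_{n}^{(\pk,\ides)}(s,t)$): the reverse map satisfies $\ides(\pi^{r})=n-1-\ides(\pi)$ and sends the peak at position $k$ to a peak at position $n+1-k$ with the same value, so $\Pin(\pi^{r})=\Pin(\pi)$ and $\hat{A}_{n,S}^{\ides}(t)=t^{n+1}\hat{A}_{n,S}^{\ides}(1/t)$. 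Your structural observation that $\Pin$ is not a descent statistic, so that $\{\pi:\Pin(\pi)=S\}$ is not a union of descent classes and the ribbon/scalar-product machinery of Sections \ref{s-pkdes}--\ref{s-maj} cannot see it, is also sound. But the actual content of the conjecture is $\gamma$-nonnegativity, and your argument for it rests entirely on the ``compatibility lemma'' that the inversion-conjugated valley-hopping action preserves $\Pin(\pi)$---a lemma you leave unproven and yourself flag as the crux.

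That lemma is in fact false, so the main route collapses. Take $\pi=1324\in\mathfrak{S}_{4}$, so $\Pin(\pi)=\{3\}$ and $\sigma=\pi^{-1}=1324$. In the zero-padded word $0\,1\,3\,2\,4\,0$ the letter $1$ is a double ascent, and the modified Foata--Strehl hop moves it to the end: $\varphi_{1}(\sigma)=3241$ (the letters $3,2,4$ are a peak, a valley, and a peak, hence fixed, so the orbit of $\sigma$ is exactly $\{1324,3241\}$). Then $\psi_{1}(\pi)=(3241)^{-1}=4213$, whose pinnacle set is $\emptyset\neq\{3\}$, even though $\ides$ toggles from $1$ to $2$ as intended. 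So the orbits of the conjugated action do not stay inside $\{\pi:\Pin(\pi)=S\}$ and the gamma decomposition does not descend to fixed pinnacle sets. What remains is your fallback---``engineer a bespoke action'' adapted to the recursive structure of pinnacle sets---but no such action is constructed, no orbit structure is verified, and nothing is proved about its representatives; it is a research program, not an argument. In sum: you have established palindromicity of $\hat{A}_{n,S}^{\ides}(t)$ about $(n+1)/2$ (plus the reduction of Conjecture \ref{cj-gp} to Conjecture \ref{cj-gppin}, which the paper already notes), while the $\gamma$-positivity itself remains open, exactly as it is in the paper.
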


Note that
\[
\hat{A}_{n,k}^{\ides}(t)=\sum_{\substack{S\subseteq[n]\\
\left|S\right|=k
}
}\hat{A}_{n,S}^{\ides}(t).
\]
Because the sum of $\gamma$-positive polynomials with the same center
of symmetry is again $\gamma$-positive, a positive resolution to
Conjecture \ref{cj-gppin} would imply Conjecture \ref{cj-gp}. However,
we do not have nearly as much empirical evidence to support Conjecture
\ref{cj-gppin}. Since we do not have a formula for the polynomials
$\hat{A}_{n,k}^{\ides}(t)$, we were only able to verify Conjecture
\ref{cj-gppin} up to $n=10$, whereas Conjecture \ref{cj-gp} has
been verified for all $n\leq80$ with the assistance of Theorem \ref{t-pkides}
(a).

\bigskip{}

\noindent \textbf{Acknowledgements.} We thank Kyle
Petersen for insightful discussions concerning the work in this paper,
and in particular for his suggestion to look at refining Conjecture
\ref{cj-gp} by pinnacle sets. We also thank two anonymous referees for their
careful reading of an earlier version of this paper and providing
thoughtful comments.

\bibliographystyle{plain}
\addcontentsline{toc}{section}{\refname}\bibliography{bibliography}

\end{document}